\definecolor{ashgrey}{rgb}{0.7, 0.75, 0.71}
\definecolor{oxfordblue}{rgb}{0.0, 0.13, 0.28}
\definecolor{armygreen}{rgb}{0.29, 0.33, 0.13}
\definecolor{bulgarianrose}{rgb}{0.28, 0.02, 0.03}
\definecolor{carnelian}{rgb}{0.7, 0.11, 0.11}
\definecolor{lapislazuli}{rgb}{0.15, 0.38, 0.61}
\definecolor{mediumelectricblue}{rgb}{0.01, 0.31, 0.59}
\newtheorem{thm}{Theorem}[section]
\newtheorem{cor}[thm]{Corollary}
\newtheorem{prop}[thm]{Proposition}
\theoremstyle{definition}
\theoremstyle{remark}
\newtheorem{rem}[thm]{Remark}
\numberwithin{equation}{subsection}
\newtheorem{spec}[thm]{Speculation}
\newtheorem{conj}[thm]{Conjecture}
\newcommand{\al}{\alpha}
\newcommand{\la}{\lambda}
\newcommand{\ka}{\kappa}
\newcommand{\w}{\omega}
\newcommand{\CC}{\mathcal{C}}
\newcommand{\M}{\overline{M}}
\newcommand{\DD}{\mathcal{D}}
\newcommand{\OO}{\mathcal{O}}
\newcommand{\CF}{\mathcal{F}}
\newcommand{\I}{\mathcal{I}}
\newcommand{\E}{\mathbb{E}}
\newcommand{\F}{\mathbb{F}}
\newcommand{\J}{\mathcal{J}}
\newcommand{\QQ}{\mathbb{Q}}
\newcommand{\V}{\mathbb{V}}
\newcommand{\D}{\qquad}
\newcommand{\too}{\rightarrow}
\newcommand{\CM}{\mathcal{M}}
\newcommand{\CR}{\mathcal{R}}
\begin{document}

\title[The moduli space of curves and its invariants]
{The moduli space of curves and its invariants}
\author[M. Tavakol]{Mehdi Tavakol}
   \address{IBS Center for Geometry and Physics}
   \email{mehdi@ibs.re.kr}

\maketitle

%\begin{center}
%\emph{Dedicated to the memory of my mother}
%\end{center}

\begin{abstract}
This note is about invariants of moduli spaces of curves.
It includes their intersection theory and cohomology.
Our main focus is on the distinguished piece containing the so called \emph{tautological classes}.
These are the most natural classes on the moduli space.
We give a review of known results and discuss their conjectural descriptions.
%Their systematic analysis was initiated by Mumford from algebraic geometric point of view.
%The conjecture of Witten proved by Kontsevich lead to a new insight into this subject.
%Since then tautological classes have been studied by many people.
\end{abstract}   

\maketitle

\setcounter{tocdepth}{2}
%\tableofcontents

%\newpage
\part*{Introduction}

The moduli space of a certain class of geometric objects parameterizes the isomorphism classes of these objects. 
Moduli spaces occur naturally in classification problems.  
They appear in many branches of mathematics and in particular in algebraic geometry. 
In this exposition we consider several moduli spaces which involve algebraic curves.
The most basic space in this note is the space $M_g$ which classifies smooth and proper curves of genus $g$.
The moduli space $M_g$ is not a compact space as smooth curves can degenerate into singular ones. 
Deligne and Mumford in \cite{irreducibility} introduced a compactification of this space by means of stable curves.
Stable curves may be singular but only nodal singularities are allowed. 
Such a curve is stable when it is reduced, connected and each smooth rational component meets the other components of the curve in at least 3 points.
This guarantees the finiteness of the group of automorphisms of the curve.  
The moduli space of stable curves of genus $g$ is denoted by $\overline{M}_g$.
It is phrased in \cite{irreducibility} that the moduli space $\overline{M}_g$ is just the $``$underlying coarse variety$"$
of a more fundamental object, the \emph{moduli stack} $\overline{\mathcal{M}}_g$.
For details about algebraic stacks we refer the reader to the references \cite{edidin_stack, everybody, gomez_algebraic}.
An introduction to algebraic stacks is also given in the appendix of \cite{vistoli_intersection}.
A more comprehensive reference is the note \cite{vistoli_notes} by Vistoli.
We only mention that basic properties of algebraic varieties (schemes) and morphisms between them
generalize for algebraic stacks in a natural way. 
It is proven in \cite{irreducibility} that $\overline{\mathcal{M}}_g$ is a separated, proper and smooth algebraic stack of finite type over Spec$(\mathbb{Z})$ and the complement 
$\overline{\mathcal{M}}_g \backslash \mathcal{M}_g$ is a divisor with normal crossings relative to 
Spec$(\mathbb{Z})$. 
They prove that for an algebraically closed field $k$ the moduli space $\overline{M}_g$ over $k$ is irreducible.
%For a reader who is not familiar with this concept we quote a sentence from Witten:
%"What is a stack? It is a space that is locally a quotient".
%This definition fails to describe all kinds of stacks which you can see in the zoo of mathematics.
%However it is the shortest definition which does not suggest a completely wrong picture. 
A bigger class of moduli spaces of curves deals with pointed curves. 
Let $g,n$ be integers satisfying $2g-2+n >0$. 
The moduli space $M_{g,n}$ classifies the isomorphism classes of the objects of the form $(C; x_1, \dots ,x_n)$,
where $C$ is a smooth curve of genus $g$ and the $x_i$'s are $n$ distinct points on the curve. 
The compactification of this space by means of stable $n$-pointed curves is studied by Knudsen and Mumford in a series of papers \cite{knudsen_projectivity_I, knudsen_mumford, knudsen_projectivity_III}.
The main ingredient in showing the projectivity of the moduli space is geometric invariant theory \cite{git}.
For other treatments based on different techniques see articles by Cornalba \cite{cornalba_projectivity}, Geiseker \cite{geiseker_moduli}, Koll\'ar \cite{kollar_moduli} and
Viehweg \cite{viehweg_I, viehweg_II, viehweg_III}.
The notion of an stable $n$-pointed curve is defined as above with slight modifications: 
First of all we assume that all the markings on the curve are distinct smooth points.
For a nodal curve of arithmetic genus $g$ with $n$ marked points the nodes and markings are called 
\emph{special points}. Such a pointed curve is said to be stable if a smooth rational component has at least 3 special points. 
The Deligne-Mumford compactification $\overline{M}_{g,n}$ of $M_{g,n}$ 
parameterizes stable $n$-pointed curves of arithmetic genus $g$.
The space $M_{g,n}$ sits inside $\overline{M}_{g,n}$
as an open dense subset and the boundary $\overline{M}_{g,n} \backslash M_{g,n}$ is a divisor with normal crossings.
There are partial compactifications of $M_{g,n}$ inside $\overline{M}_{g,n}$ which we recall: 
The moduli space of stable $n$-pointed curves of compact type, denoted by $M_{g,n}^{ct}$, 
parameterizes stable curves whose Jacobian is an abelian variety. 
The moduli space $M_{g,n}^{rt}$ of stable $n$-pointed curves with rational tails is the inverse image of $M_g$ under the natural morphism $\overline{M}_{g,n} \rightarrow \overline{M}_g$, when $g \geq 2$. 
The space $M_{1,n}^{rt}$ is defined to be $M_{1,n}^{ct}$ and $M_{0,n}^{rt}=\overline{M}_{0,n}$.

Here we present a short review of well-known facts and conjectures 
about the moduli spaces of curves and their invariants. 
Our main focus is on the study of the tautological classes on these moduli spaces and their images 
in the homology and cohomology groups of the spaces.
%Unfortunately, we don't mention many other interesting aspects of moduli spaces of curves.
%This includes their connection with arithmetic geometry, their birational geometry and their topology.

\vspace{+10pt}
\noindent{\bf Acknowledgments.}
The first version of this note appeared in the introduction of my thesis.
It was revised after a series of lectures at IBS center for geometry and physics.
%I would like to thank Gabriel Drummond-Cole for organising the seminars. 
I would like to thank Carel Faber for the careful reading of the previous version of this note and useful comments.
Essential part of the new topics included in the exposition is based on discussions with many people.
In particular, I would like to thank Petya Dunin-Barkowski, Jérémy Guéré, Felix Janda, Dan Petersen, Aaron Pixton, Alexander Popolitov, Sergey Shadrin, Qizheng Yin and Dmitry Zvonkine.
I was supported by the research grant IBS-R003-S1.

\part*{Intersection theory of moduli spaces of curves}

%\section*{Algebraic stacks and their intersection theory}

\section*{Enumerative Geometry of the moduli space of curves} 
In \cite{mumford_towards} Mumford started the program of studying the enumerative geometry of the set of all curves of arbitrary genus $g$. 
He suggests to take as a model for this the enumerative geometry of the Grassmannians. 
In this case there is a universal bundle on the variety whose Chern classes generate both 
the cohomology ring and the Chow ring of the space. 
Moreover, there are tautological relations between these classes which give a 
complete set of relations for the cohomology and Chow rings.
The first technical difficulty is to define an intersection product in the Chow group of 
$\overline{M}_g$. The variety $\overline{M}_g$ is singular due to curves with automorphisms. 
Mumford solves this problem by observing that $\overline{M}_g$ has the structure of a $Q$-variety. 
A quasi-projective variety $X$ is said to be a $Q$-variety if locally in the \'etale topology it is locally the quotient of a smooth variety by a finite group. 
This is equivalent to saying that there exists an \'etale covering of the quasi-projective variety $X$ by a finite number of varieties each of which is a quotient of a quasi-projective variety by a faithful action of a finite group.
For a $Q$-variety $X$ it is possible to find a global covering $p: \widetilde{X} \rightarrow X$ with a
group $G$ acting faithfully on $\widetilde{X}$ and $X=\widetilde{X}/G$. In this situation there is a covering of $\widetilde{X}$ by a finite number of open subsets $\widetilde{X}_{\alpha}$ and finite groups 
$H_{\alpha}$ such that the collection $X_{\alpha}=\widetilde{X}_{\alpha}/H_{\alpha}$ gives an \'etale covering of $X$. The charts should be compatible in the sense that for all $\alpha$ and $\beta$ the projections
from the normalization $X_{\alpha \beta}$ of the fibered product $X_{\alpha} \times_X X_{\beta}$ to
$X_{\alpha}$ and $X_{\beta}$ should be \'etale. 
An important notion related to a $Q$-variety $X$ is the concept of a $Q$-sheaf. 
By a coherent $Q$-sheaf $\mathcal{F}$ on $X$ we mean a family of coherent sheaves $\mathcal{F}_{\alpha}$ on the local charts $X_{\alpha}$, plus isomorphisms 
$$\mathcal{F}_{\alpha} \otimes_{\mathcal{O}_{X_{\alpha}}} \mathcal{O}_{X_{\alpha \beta}} \cong 
\mathcal{F}_{\beta} \otimes_{\mathcal{O}_{X_{\beta}}} \mathcal{O}_{X_{\alpha \beta}}.$$
Equivalently, a $Q$-sheaf is given by a family of coherent sheaves on the local charts, such that the pull-backs 
to $\widetilde{X}$ glue together to one coherent sheaf $\widetilde{\mathcal{F}}$ on $\widetilde{X}$ on which $G$ acts. 
Mumford proves the following fact about $Q$-sheaves on varieties with a global Cohen-Macaulay cover:

\begin{prop}
If $\widetilde{X}$ is Cohen-Macaulay, then for any coherent sheaf $\mathcal{F}$ on the $Q$-variety $X$,
$\widetilde{\mathcal{F}}$ has a finite projective resolution.
\end{prop}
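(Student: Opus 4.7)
The plan is to construct a finite resolution of $\widetilde{\mathcal{F}}$ by locally free sheaves of finite rank by iteratively taking syzygies, and to use the Auslander--Buchsbaum formula together with the Cohen--Macaulay hypothesis to guarantee that the process terminates after at most $n=\dim\widetilde{X}$ steps. The central observation is that for a Cohen--Macaulay Noetherian local ring $R$ and a finitely generated module $M$ of \emph{finite} projective dimension one has $\mathrm{pd}_{R}(M)+\mathrm{depth}_{R}(M)=\mathrm{depth}(R)=\dim R$; in particular $\mathrm{pd}_{R}(M)\leq\dim R$. So the whole argument hinges on checking that $\widetilde{\mathcal{F}}$ has \emph{finite} projective dimension at every stalk of $\widetilde{X}$.

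For this local statement I would work through the \'etale charts coming from the $Q$-variety structure. Fix $\tilde{x}\in\widetilde{X}$ with image $x\in X$, and pick a chart $X_{\alpha}=\widetilde{X}_{\alpha}/H_{\alpha}$ through $x$ with $\widetilde{X}_{\alpha}$ smooth. On $\widetilde{X}_{\alpha}$ the pulled-back coherent sheaf is defined over a regular variety, so by the Auslander--Buchsbaum--Serre theorem each of its stalks has finite projective dimension over the regular local rings of $\widetilde{X}_{\alpha}$. The compatibility axioms for a $Q$-sheaf identify this pull-back, after base change along the fibred products $X_{\alpha\beta}$, with the restriction of $\widetilde{\mathcal{F}}$; since the transition maps are \'etale and hence faithfully flat between Noetherian local rings, finite projective dimension descends to $\widetilde{\mathcal{F}}_{\tilde{x}}$ as an $\mathcal{O}_{\widetilde{X},\tilde{x}}$-module.

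Armed with finite projective dimension at every stalk, the Auslander--Buchsbaum bound gives $\mathrm{pd}(\widetilde{\mathcal{F}}_{\tilde{x}})\leq n$ uniformly in $\tilde{x}$. One then builds the resolution in the standard way: since $\widetilde{X}$ is quasi-projective there is a surjection $E_{0}\twoheadrightarrow\widetilde{\mathcal{F}}$ from a locally free sheaf of finite rank; iterate this to obtain locally free sheaves $E_{0},E_{1},\dots,E_{n-1}$ together with a coherent kernel $K_{n}$ fitting into
$$0\to K_{n}\to E_{n-1}\to\cdots\to E_{0}\to\widetilde{\mathcal{F}}\to 0.$$
Each stalk $K_{n,\tilde{x}}$ is an $n$-th syzygy of $\widetilde{\mathcal{F}}_{\tilde{x}}$, so by the bound on projective dimension it is projective, hence free, over the local ring $\mathcal{O}_{\widetilde{X},\tilde{x}}$. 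A coherent sheaf on the Noetherian scheme $\widetilde{X}$ whose stalks are all free is locally free, so $K_{n}$ is locally free and the displayed sequence is the desired finite resolution.

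The step I expect to be the main obstacle is the local-to-global bridge in the second paragraph: making rigorous the descent from finite projective dimension on a smooth local chart $\widetilde{X}_{\alpha}$ to finite projective dimension at a stalk of the global cover $\widetilde{X}$, using the compatibility of the charts $X_{\alpha\beta}$ in the $Q$-variety data together with the preservation of projective dimension under faithfully flat local homomorphisms. Once this bridge is in place, the Cohen--Macaulay hypothesis is used only through Auslander--Buchsbaum, and the remainder of the proof is routine homological algebra.
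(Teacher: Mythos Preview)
This survey does not prove the proposition; it is quoted from Mumford \cite{mumford_towards}, so there is no in-paper proof to compare against. Your outline is essentially Mumford's original argument: finite projective dimension on the smooth local charts (Serre), transfer to the stalks of $\widetilde{X}$ via the \'etale compatibility built into the $Q$-variety data, then Auslander--Buchsbaum on the Cohen--Macaulay $\widetilde{X}$ to get a uniform bound $\mathrm{pd}\leq\dim\widetilde{X}$, and finally the standard syzygy construction on a quasi-projective scheme.

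One notational caution: in the paper's conventions $\widetilde{X}_{\alpha}$ denotes an open subset of the global cover $\widetilde{X}$ (hence only Cohen--Macaulay), whereas the smooth objects are the local charts $X_{\alpha}$ with $X_{\alpha}/G_{\alpha}\to X$ \'etale that appear a few lines later; your second paragraph conflates these. The step you rightly flag as the obstacle---passing finite projective dimension from the smooth chart to a stalk of $\widetilde{X}$---is handled in Mumford by using the normalized fibre products $X_{\alpha\beta}$, which are \'etale over both factors; the general fact that projective dimension is preserved under faithfully flat local extensions of Noetherian rings then does the work. With that adjustment your argument is correct and coincides with the original.
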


He shows that $\overline{M}_g$ is globally the quotient of a Cohen-Macaulay variety by a finite group $G$. 
Using the idea of Fulton's Operational Chow ring Mumford defines an intersection product on the Chow group of $\overline{M}_g$ with $\mathbb{Q}$-coefficients. 
More precisely, Mumford proves that for $Q$-varieties with a global Cohen-Macaulay cover there is a canonical isomorphism between the Chow group and the $G$-invariants in the operational Chow ring of the covering variety. 
There are two important conclusions of this result:
First of all there is a ring structure on the Chow group $A^*(X)$, and for all $Q$-sheaves $\mathcal{F}$ on the $Q$-variety $X$, 
we can define Chern classes $$c_k(\mathcal{F}) \in A^k(X).$$

For an irreducible codimension $k$ subvariety $Y$ of an $n$-dimensional $Q$-variety $X=\widetilde{X}/G$ 
as above there are two notions of fundamental class which differ by a rational number.
The usual fundamental class $[Y]$ is an element of the Chow group $A_{n-k}(X)=A^k(X)$.
Another notion is that of the $Q$-class of $Y$, denoted by $[Y]_Q$. This is the class
$ch_k(\mathcal{O}_Y)$ in the $G$-invariant part of the operational Chow ring. 
There is a simple relation between these two classes:
$$[Y]_Q=\frac{1}{e(Y)} \cdot [Y],$$
where the integer $e(Y)$ is defined as follows: 
Consider a collection of local charts $p_{\alpha}: X_{\alpha} \rightarrow X$ such that $X_{\alpha}$
is smooth on which a finite group $G_{\alpha}$ acts faithfully giving an \'etale map $X_{\alpha}/G_{\alpha} \rightarrow X$.
Choose $\alpha$ so that $p_{\alpha}^{-1}$ is not empty. 
Then $e(Y)$ is defined to be the order of the stabilizer of a generic point of 
$p_{\alpha}^{-1}(Y)$ in $G_{\alpha}$.

\begin{rem}
It is indeed true that there are global coverings of the moduli spaces of stable pointed curves by smooth varieties.
This was not known at the time that Mumford defined the intersection product on the Chow groups of 
$M_g$ and $\overline{M}_g$. This covering is defined in terms of level structures on curves.
The notion of a genus $g$ curve with a Teichm\"uller structure of level $G$ has been first introduced by Deligne and
Mumford in \cite{irreducibility}.
Looijenga \cite{looijenga_smooth} introduces the notion of a Prym level structure on a smooth curve and shows that the compactification
$X$ of the moduli space of curves with Prym level structure is smooth and that $\overline{M}_g=X/G$, 
with $G$ a finite group. 
In \cite{boggi_pikaart} Boggi and Pikaart prove that $\overline{M}_{g,n}$ is a quotient of a smooth variety by a finite group.  
Therefore, it is easier to prove the existence of an intersection product with the desired properties on the 
moduli spaces of curves using standard intersection theory. 
\end{rem}

\subsubsection{Tautological classes}
To define the tautological classes Mumford considers the 
coarse moduli space of 1-pointed stable curves of genus 
$g$, denoted by $\overline{M}_{g,1}$ or $\overline{C}_g$. 
$\overline{C}_g$ is also a $Q$-variety and has a covering $\widetilde{C}_g$
and there is a morphism  
$$\pi: \widetilde{C}_g \rightarrow \widetilde{M}_g,$$
which is a flat, proper family of stable curves, with a finite group
$G$ acting on both, and 
$\overline{C}_g=\widetilde{C}_g/G$, 
$\overline{M}_g=\widetilde{M}_g/G$. 
There is a $Q$-sheaf 
$\omega:=\omega_{\overline{C}_g/\overline{M}_g}$
induced from the dualizing sheaf 
$\omega_{\widetilde{C}_g/\widetilde{M}_g}$
of the projection $\pi$. 
The tautological classes are defined by:
$$K=c_1(\omega) \in A^1(\overline{C}_g), $$
$$\kappa_i=\pi_* (K^{i+1}) \in A^i(\overline{M}_g),$$
$$\mathbb{E}= \pi_*(\omega): \qquad \text{a locally free} \  Q\text{-sheaf of rank} \  g \ \text{on} \ \overline{M}_g,$$ 
$$\lambda_i=c_i(\mathbb{E}), \qquad 1 \leq i \leq g.$$

\subsubsection{Tautological relations via Grothendieck-Riemann-Roch}
To find tautological relations among these classes Mumford applies the Grothendieck-Riemann-Roch theorem 
to the morphism $\pi$ and the class $\omega$. 
This gives an identity which expresses the Chern character of the
Hodge bundle $\mathbb{E}$ in terms of the kappa classes and the boundary cycles.  
He also proves that for all even positive integers $2k$, $$(ch \ \mathbb{E})_{2k}=0.$$
As a result one can express the even $\lambda_i$'s as polynomials in terms of the odd ones, 
and all the $\lambda_i$'s as polynomials in terms of $\kappa$ classes and boundary cycles.  
Another approach in finding relations between the $\lambda$ and $\kappa$ 
classes is via the canonical linear system.
In this part Mumford considers smooth curves. The method is based on the fact 
that for all smooth curves $C$, the canonical sheaf $\omega_C$ is generated by its global sections. 
This gives rise to an exact sequence of coherent shaves on the moduli space and leads to the following result:

\begin{cor}\label{lam}
For all $g$, all the classes $\lambda_i,\kappa_i$ restricted to $A^*(M_g)$ are polynomials in $\kappa_1, \dots , \kappa_{g-2}$.
\end{cor}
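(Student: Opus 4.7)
The plan is to combine two families of relations on $M_g$: the Grothendieck-Riemann-Roch identities for $\pi:\widetilde{C}_g \to \widetilde{M}_g$ used above to compute $ch(\mathbb{E})$, and the relations coming from the fact that for a smooth curve of genus $g\ge 2$ the canonical sheaf is generated by its global sections. The first set will express each $\lambda_i$ as a polynomial in the $\kappa_j$'s, the second will express each sufficiently high $\kappa_j$ in terms of lower $\kappa$'s and the $\lambda_i$'s, and a combined induction will confine everything to $\kappa_1,\dots,\kappa_{g-2}$.

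For the first family, on the open locus $M_g$ the boundary corrections in GRR disappear, and Mumford's vanishing $ch_{2k}(\mathbb{E})=0$ leaves each odd Chern character as a rational multiple of $\kappa_{2k-1}$. Newton's identities then write each $\lambda_i$ ($1\le i\le g$) as a polynomial of degree $i$ in the $\kappa$'s of index at most $i$. For the second family, global generation of $\omega_C$ globalizes to a short exact sequence
$$0 \to F \to \pi^*\mathbb{E} \to \omega \to 0$$
on the universal smooth curve, with $F$ locally free of rank $g-1$. From $c(F)=c(\pi^*\mathbb{E})/(1+K)$ and $c_n(F)=0$ for $n\ge g$, I would obtain $\sum_{j=0}^{g}(-1)^{g-j}\pi^*\lambda_j\,K^{g-j}=0$; multiplying by $K^m$ ($m\ge 0$), pushing down by $\pi$, and using the projection formula together with $\pi_*(K^r)=\kappa_{r-1}$ (so in particular $\kappa_0=2g-2$), this becomes, for each $m$, a relation
$$\sum_{j=0}^{g}(-1)^{g-j}\lambda_j\,\kappa_{g-1-j+m}=0\qquad\text{in }A^*(M_g),$$
which isolates $\kappa_{g-1+m}$ with leading coefficient $\pm 1$ in terms of lower $\kappa$'s and of $\lambda_1,\dots,\lambda_g$.

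Finally I would run a strong induction on $i\ge g-1$: assuming $\kappa_{i'}\in\QQ[\kappa_1,\dots,\kappa_{g-2}]$ for all $i'<i$, the relation above with $m=i-g+1$ writes $\kappa_i$ as a polynomial in the already-controlled $\kappa_{i'}$ ($i'<i$) and in $\lambda_1,\dots,\lambda_g$; modulo one self-referential term, everything already lies in $\QQ[\kappa_1,\dots,\kappa_{g-2}]$. The self-reference, and the main obstacle, is this: for $i\le g$, substituting the GRR expression $\lambda_i = c\,\kappa_i + (\text{polynomial in }\kappa_1,\dots,\kappa_{i-1})$ reintroduces $\kappa_i$ through the contribution $\lambda_i\kappa_0 = (2g-2)\lambda_i$, so to solve for $\kappa_i$ I must check that the resulting scalar coefficient $1-(-1)^{i+1}(2g-2)c$ is nonzero. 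Here $c$ is an explicit rational multiple of a Bernoulli number arising from Newton's identities; it vanishes for one parity of $i$, and since $\lambda_i=0$ for $i>g$ only the two values $i=g-1$ and $i=g$ need to be checked. Once this is confirmed the induction closes, and a final application of GRR removes the remaining $\lambda$'s, showing that every tautological class on $M_g$ is a polynomial in $\kappa_1,\dots,\kappa_{g-2}$.
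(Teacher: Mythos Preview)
Your approach is exactly the one the paper sketches (following Mumford): Grothendieck--Riemann--Roch on $M_g$ writes each $\lambda_i$ as a polynomial in the odd $\kappa_j$, and the surjection $\pi^*\mathbb{E}\twoheadrightarrow\omega$ coming from global generation of the canonical sheaf gives $c_j(\pi^*\mathbb{E}-\omega)=0$ for all $j\ge g$, whose push-forwards after multiplication by $K^m$ are precisely your recursions $\sum_{j=0}^{g}(-1)^{g-j}\lambda_j\,\kappa_{g-1-j+m}=0$.

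The self-reference you flag is the only point left implicit in both sketches, and your analysis of it is accurate: since the coefficient $c$ of $\kappa_i$ in $\lambda_i$ vanishes for $i$ even, only the unique odd member of $\{g-1,g\}$ requires checking. For $g$ even this reduces to $2B_g/g\neq 1$, i.e.\ $2B_g\neq g$; by von Staudt--Clausen the denominator of $B_g$ in lowest terms is divisible by $3$, so $2B_g$ is never an integer and the inequality holds. For $g$ odd the condition becomes $2(g-1)B_{g+1}\neq g(g+1)$, and the same denominator argument (or a sign check when $g\equiv 3\pmod 4$, together with a direct size comparison for the finitely many remaining small cases) disposes of it. With this routine verification your induction closes.
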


\subsubsection{The tautological classes via Arbarello's flag of subvarieties of $M_g$}
In this part Mumford considers the following subsets of $C_g$ and $M_g$:
$$W_l^*=\{(C,x) \in C_g: h^0(\mathcal{O}_C( l \cdot x)) \geq 2 \} \subset C_g,$$
$$W_l=\pi(W_l^*) \subset M_g,$$
where $2 \leq l \leq g$. He proves that $W_l^*$ is irreducible of codeminsion $g-l+1$ 
and its class is expressed in terms of tautological classes:
$$[W_l^*]_Q=(g-l+1)^{st}- \text{component of}$$
$$\pi^*(1-\lambda_1+\lambda_2- \dots +(-1)^g \lambda_g) \cdot (1-K)^{-1} \cdot \dots \cdot (1-lK)^{-1}.$$
Considering $W_l$ as the cycle $\pi_*(W_l^*)$, it follows that
$$[W_l]_Q=(g-l)^{st}- \text{component of}$$
$$(1-\lambda_1+\lambda_2- \dots +(-1)^g \lambda_g) \cdot \pi_*[ (1-K)^{-1} \cdot \dots \cdot (1-lK)^{-1}].$$
In the special case $l=2$ the subvariety $W_2$ is the locus of hyperelliptic curves.  
The formula for the class of this locus is as follows:
$$[H]=2[H]_Q=\frac{1}{g+1}\left( (2^g-1)\kappa_{g-2} - \dots +(-1)^{g-2}(6g-6)\lambda_{g-2} \right).$$

In the third part of \cite{mumford_towards} Mumford gives a complete description of the intersection ring of $\overline{M}_2$. 
He proves the following fact:

\begin{thm}
The Chow ring of $\overline{M}_2$ is given by $$A^*(\overline{M}_2)=\frac{\mathbb{Q}[\lambda, \delta_1]}{(\delta_1^2+\lambda \delta_1, 5 \lambda^3-\lambda^2 \delta_1)}.$$
The dimensions of the Chow groups are 1,2,2 and 1, and the pairing between Chow groups of complementary dimensions is perfect.
\end{thm}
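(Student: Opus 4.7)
I would proceed in four phases: identifying the generators in each codimension, establishing the single codimension-one relation among them, producing the two higher-codimension relations, and finally verifying the dimensions and intersection pairing. At the outset, note that since $g-2=0$, Corollary \ref{lam} forces every tautological class of positive codimension to vanish on the open part $M_2$; combined with the classical fact that $A^*(M_2)_{\mathbb{Q}} = \mathbb{Q}$ (obtained from the hyperelliptic description of $M_2$ as a quotient of a configuration space on $\mathbb{P}^1$), the excision sequence against $\partial\overline{M}_2 = \Delta_0 \cup \Delta_1$ shows that $A^1(\overline{M}_2)_{\mathbb{Q}}$ is spanned by $\delta_0$ and $\delta_1$, and that higher codimensions are exhausted by pushforwards from the two boundary divisors.

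Next, I would derive the codimension-one relation. Grothendieck--Riemann--Roch applied to the universal curve $\pi$, exactly as in Mumford's computation of $(ch\,\mathbb{E})$, expresses $\kappa_1$ in terms of $\lambda$ and the boundary, and the vanishing of $\lambda$ on $M_2$ forces $\lambda$ itself to be a combination of $\delta_0$ and $\delta_1$. Pinning down the coefficients by testing on an explicit family — for example a Lefschetz pencil of plane quartic models, or the one-parameter family obtained by degenerating the six branch points of a hyperelliptic cover — yields $10\lambda = \delta_0 + 2\delta_1$ and reduces a generating set of $A^1(\overline{M}_2)_{\mathbb{Q}}$ to $\{\lambda,\delta_1\}$.

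For the two defining relations I would work on the boundary. For $\delta_1^2 + \lambda\delta_1 = 0$, I would restrict to $\Delta_1 \cong (\overline{M}_{1,1} \times \overline{M}_{1,1})/S_2$, whose normal bundle at the point $E_1 \cup_p E_2$ is the tensor product of the tangent lines at the node; this makes $\delta_1|_{\Delta_1}$ computable in terms of the cotangent classes $\psi$ on the two $\overline{M}_{1,1}$ factors, and combining with the identity $\lambda = \psi/12$ valid on $\overline{M}_{1,1}$ produces the relation after pushing forward. For $5\lambda^3 - \lambda^2\delta_1 = 0$, I would multiply the codimension-one relation by $\lambda^2$ and combine it with Mumford's vanishing $(ch\,\mathbb{E})_{2k} = 0$, which expresses $\lambda_2$ as a polynomial in $\lambda_1$ and suppresses the $\lambda^2\delta_0$ contribution in favour of $\lambda^3$ and $\lambda^2\delta_1$.

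Finally, I would check the presentation by computing the Hilbert function of $\mathbb{Q}[\lambda,\delta_1]/(\delta_1^2 + \lambda\delta_1,\;5\lambda^3 - \lambda^2\delta_1)$, which gives the predicted $(1,2,2,1)$, and then confirm non-degeneracy of the pairing by evaluating the four codimension-three monomials against $[\overline{M}_2]$. I expect the main obstacle to be the intersection-theoretic calculations on the boundary: the self-intersection analysis of $\Delta_1$ requires careful tracking of the automorphism factors arising both from the $Q$-variety structure of $\overline{M}_2$ and from the $S_2$-quotient on $\Delta_1$, and analogous subtleties reappear on $\Delta_0 \cap \Delta_1$ and at the self-intersection locus of $\Delta_0$ when pushing forward degree-three monomials.
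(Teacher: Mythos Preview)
The paper does not prove this theorem; it records it as Mumford's result from \cite{mumford_towards}. Your outline is essentially Mumford's own strategy --- trivialize $A^*(M_2)$, run excision against $\Delta_0\cup\Delta_1$, establish $10\lambda=\delta_0+2\delta_1$, and then compute on the boundary --- so at the level of architecture there is nothing to compare.

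A few places in the execution need repair. On $\overline{M}_{1,1}$ the correct identity is $\lambda=\psi$, not $\lambda=\psi/12$; the factor $12$ sits instead in $12\lambda=\delta_{irr}$. With $\lambda=\psi$ your self-intersection argument on $\Delta_1$ goes through: $\delta_1|_{\Delta_1}=-\psi_1-\psi_2=-(\lambda_1+\lambda_2)=-\lambda|_{\Delta_1}$, hence $\delta_1^2+\lambda\delta_1=0$.

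The derivation of $5\lambda^3=\lambda^2\delta_1$ via $(ch\,\mathbb{E})_{2k}=0$ does not work as written: that vanishing only yields $\lambda_2=\lambda_1^2/2$ and says nothing about $\lambda^2\delta_0$. What you actually need, after multiplying $10\lambda=\delta_0+2\delta_1$ by $\lambda^2$, is $\lambda^2\delta_0=0$. This holds because on the normalization $\overline{M}_{1,2}\to\Delta_0$ the Hodge bundle sits in an extension whose sub is pulled back from $\overline{M}_{1,1}$ and whose quotient is trivial, so $\lambda|_{\Delta_0}$ is pulled back from a one-dimensional base and its square vanishes.

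Finally, excision tells you $A^{\geq 1}(\overline{M}_2)$ is spanned by pushforwards from $\Delta_0$ and $\Delta_1$, but not yet that those pushforwards lie in the subring generated by $\lambda$ and $\delta_1$. Mumford closes this gap by computing $A^*(\Delta_1)$ (via $\overline{M}_{1,1}\times\overline{M}_{1,1}$) and $A^*(\Delta_0)$ (via $\overline{M}_{1,2}$) and checking that every class there is the restriction of a polynomial in $\lambda,\delta_0,\delta_1$; you should insert this step before invoking the Hilbert-function count, otherwise the surjectivity of $\mathbb{Q}[\lambda,\delta_1]\to A^*(\overline{M}_2)$ in codimensions $2$ and $3$ is unjustified.
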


\subsection*{Known results}
In this part we review known results about the intersection theory of the moduli spaces of curves after Mumford. 
In the first part of his thesis \cite{faber_thesis} Faber studies the intersection ring of the moduli space 
$\overline{M}_3$ and he proves the following result:
 
\begin{thm} 
The Chow ring of $\overline{M}_3$ is given by 
$$A^*(\overline{M}_3)=\frac{\mathbb{Q}[\lambda, \delta_0, \delta_1, \kappa_2]}{I},$$
where $I$ is generated by three relations in codimension 3 and six relations in codimension 4. 
The dimensions of the Chow groups are 1,3,7,10,7,3,1, respectively. 
The pairing $$A^k(\overline{M}_3) \times A^{6-k}(\overline{M}_3) \rightarrow \mathbb{Q}$$
is perfect.
\end{thm}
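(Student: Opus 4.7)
The plan is to mimic Mumford's strategy for $\overline{M}_2$: combine an excision/stratification argument that reduces everything to the open part plus boundary strata with a systematic production of tautological relations, and then verify numerically that no relations are missing.

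First I would establish that $\lambda$, $\delta_0$, $\delta_1$, $\kappa_2$ genuinely generate $A^*(\overline{M}_3)$. On the open part $M_3$, Corollary~\ref{lam} says that every $\lambda_i$ and $\kappa_i$ is already a polynomial in $\kappa_1,\dots,\kappa_{g-2}=\kappa_1$, so a single tautological generator suffices there. Via the excision sequence
$$A^*(\partial \overline{M}_3) \to A^*(\overline{M}_3) \to A^*(M_3) \to 0,$$
generation reduces to understanding classes supported on the boundary. Here the clutching maps present $\Delta_1$ as (a finite quotient of) $\overline{M}_{1,1}\times \overline{M}_{2,1}$ and $\Delta_0$ as a quotient of $\overline{M}_{2,2}$; Mumford's description of $A^*(\overline{M}_2)$, together with the completely explicit $\overline{M}_{1,1}$, makes both boundary contributions tractable, and one checks that pushforwards from the boundary combined with $\lambda$, $\kappa_2$ and products of $\delta_0,\delta_1$ exhaust the Chow groups. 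In particular $\kappa_1$ gets expressed in terms of $\lambda$ and boundary (this is Mumford's GRR identity in codimension one).

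Next I would harvest the relations. The primary source is Grothendieck--Riemann--Roch applied to the universal curve $\pi:\overline{C}_3 \to \overline{M}_3$, which yields $(ch\,\mathbb{E})_{2k}=0$ and thus expresses even Chern characters of $\mathbb{E}$ in terms of odd ones, $\kappa$-classes and boundary cycles. Additional relations come from restriction to Brill--Noether loci on $M_3$ (most notably the hyperelliptic class computed via Mumford's formula $[W_2]_Q$), from pulling back Mumford's $\overline{M}_2$ relations $\delta_1^2+\lambda\delta_1=0$ and $5\lambda^3-\lambda^2\delta_1=0$ along the clutching maps and pushing them forward, and from the vanishing of certain Brill--Noether expectations on $\overline{C}_3$. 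In codimensions $3$ and $4$ I would catalogue the resulting identities and isolate three independent relations in codimension $3$ and six in codimension $4$, defining the ideal $I$.

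Finally I would verify the presentation is sharp by computing the Hilbert function of $\mathbb{Q}[\lambda,\delta_0,\delta_1,\kappa_2]/I$ and checking the pairing. Concretely I would compute enough top products in $A^6(\overline{M}_3)\cong\mathbb{Q}$ using Mumford-style evaluation of $\lambda$- and $\kappa$-monomials and boundary restriction formulas to exhibit a nondegenerate pairing in each complementary pair $(k,6-k)$; a nondegenerate pairing simultaneously pins down the dimensions $1,3,7,10,7,3,1$ and rules out any further hidden relations.

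The main obstacle, as usual in such computations, is completeness of the relation ideal: GRR and the obvious Brill--Noether relations typically yield an ideal that is a priori too small, and one must find a handful of extra low-genus relations (in this case the additional ones in codimensions $3$ and $4$) by careful pushforward from $\overline{M}_{3,1}$ or from the boundary. Identifying this complete list and proving there is nothing further beyond what the numerical pairing sees is where the bulk of the work lies.
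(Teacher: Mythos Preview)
The paper does not actually prove this theorem; it is a survey, and the result is quoted from Faber's thesis \cite{faber_thesis} with only the remark that ``this ring is generated by tautological classes.'' So there is no in-paper argument to compare against beyond that attribution.

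Your broad strategy (stratify, generate, find relations, verify via the pairing) is in the spirit of Faber's approach, but there is a genuine gap in your generation step. Corollary~\ref{lam} says only that the \emph{tautological} classes $\lambda_i,\kappa_i$ on $M_g$ are polynomials in $\kappa_1,\dots,\kappa_{g-2}$; it says nothing about whether an arbitrary class in $A^*(M_3)$ is tautological. Your excision argument silently assumes that $A^*(M_3)$ is already generated by $\lambda$, and this is exactly the nontrivial content. Faber establishes it by exploiting the concrete geometry peculiar to genus~3: the non-hyperelliptic locus $M_3\setminus H_3$ is the moduli of smooth plane quartics, an open set in $\mathbb{P}^{14}/\mathrm{PGL}_3$, whose Chow groups can be computed directly; the hyperelliptic divisor $H_3$ is handled explicitly as well. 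Only after these geometric computations does excision give generation by tautological classes. Likewise, your claim that the boundary is ``tractable'' via Mumford's $\overline{M}_2$ computation skips a step: the clutching sources are $\overline{M}_{2,2}$ and $\overline{M}_{1,1}\times\overline{M}_{2,1}$, pointed spaces whose Chow groups are not given by Mumford. Faber computes $A^*(\overline{M}_{2,1})$ separately (this is the third chapter of the same thesis, also cited in the paper) and must analyze the deeper strata one by one.

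Your plan for producing relations (GRR, the hyperelliptic class, restriction to boundary) and for certifying completeness via the nondegeneracy of the pairing in $A^6$ does match what Faber does. But as written, the generation step would not go through: appealing to Corollary~\ref{lam} gives relations \emph{within} the tautological ring, not the equality $A^*(M_3)=R^*(M_3)$ that you need.
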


Faber gives a complete description of the intersection ring of $\overline{M}_3$. 
This ring is generated by tautological classes.
Faber also determines the ample divisor classes on $\overline{M}_3$. 
In the second part of the thesis Faber obtains partial results with respect to $\overline{M}_4$. 
In particular, he shows that the Chow ring of $M_4$ is $\mathbb{Q}[\lambda]/(\lambda^3)$ and he proves that the dimension of $A^2(\overline{M}_4)$ equals 13. 
Another interesting question would be to fix a genus $g$ and study the intersection ring of the moduli spaces 
$M_{g,n}$ and $\overline{M}_{g,n}$ when $n$ varies. 
The most basic case is of course the classical case of genus zero.  
The first description of the intersection ring of $\overline{M}_{0,n}$ is due to Keel \cite{keel_intersection}. 
Keel shows that the variety $\overline{M}_{0,n}$ can be obtained as a result of a sequence of 
blowups of smooth varieties along smooth codimension two subvarieties. 
In his inductive approach Keel describes $\overline{M}_{0,n+1}$ as a blow-up of 
$\overline{M}_{0,n} \times \mathbb{P}^1$, assuming that $\overline{M}_{0,n}$ is already constructed. 
For example, $\overline{M}_{0,3}$
consists of a single point, $\overline{M}_{0,4}$ is the projective line $\mathbb{P}^1$, 
and $\overline{M}_{0,5}$ is obtained from $\mathbb{P}^1 \times \mathbb{P}^1$ by blowing up 3 points, etc.  
He proves the following facts:

\begin{itemize}
\item The canonical map from the Chow groups to homology (in characteristic zero) 
$$A_*(\overline{M}_{0,n}) \rightarrow H_*(\overline{M}_{0,n})$$ 
is an isomorphism.

\item There is a recursive formula for the Betti numbers of $\overline{M}_{0,n}$.

\item There is an inductive recipe for determining dual bases in the Chow ring $A^*(\overline{M}_{0,n})$.

\item The Chow ring is generated by the boundary divisors. More precisely, 
$$A^*(\overline{M}_{0,n})= \frac{ \mathbb{Z} [D_S: S \subset \{1 , \dots , n\} , \ |S|,|S^c| \geq 2]}{I},$$ 
where $I$ is the ideal generated by the following relations:

\begin{enumerate}
\item[(1)] $D_S=D_{S^c}$,

\item[(2)] For any four distinct elements $i,j,k,l \in \{1,\dots ,n\}$: 
$$\sum_{\substack{i,j \in I\\k,l \notin I}}D_I=\sum_{\substack{i,k \in I\\j,l \notin I}}D_I=\sum_{\substack{i,l \in I\\j,k \notin I}}D_I,$$
 
\item[(3)] $$D_I \cdot D_J=0 \qquad \text{unless} \qquad I \subseteq J, \qquad J \subseteq I,  \qquad 
\text{or} \ I \cap J=\emptyset.$$
\end{enumerate}
\end{itemize}

According to Keel's result all relations between algebraic cycles on $\overline{M}_{0,n}$ follow from the basic fact that any two points of the projective line $\overline{M}_{0,4}=\mathbb{P}^1$ are rationally equivalent. 
While Keel gives a complete description of the intersection ring, the computations and exhibiting a basis for the Chow groups very quickly become involved as $n$ increases. 
In \cite{tavakol_0}, we give another construction of the moduli space $\overline{M}_{0,n}$ as a blowup of the variety $(\mathbb{P}^1)^{n-3}$. This gives another proof of Keel's result. This construction gives an explicit basis for the Chow groups and an explicit duality between the Chow groups in complementary degrees. Another advantage of this approach is that fewer generators and fewer relations are needed to describe the intersection ring. This simplifies the computations. 
In \cite{belorousski_chow} Belorousski considers the case of pointed elliptic curves. 
He proves that $M_{1,n}$ is a rational variety for $n \leq 10$.
One has the isomorphism $A_n(M_{1,n}) \cong \mathbb{Q}$ 
since $M_{1,n}$ is an irreducible variety of dimension $n$. Belorousski proves the following result: 
\begin{thm}\label{m}
$A_*(M_{1,n})= \mathbb{Q}$ for $1 \leq n \leq 10$.
\end{thm}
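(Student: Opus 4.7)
My plan is induction on $n$. The base case $n=1$ is classical: the $j$-invariant identifies $M_{1,1}$ with the affine line $\mathbb{A}^1$, so $A_*(M_{1,1})=\mathbb{Q}$.

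For the inductive step I would use the forgetful morphism $\pi\colon M_{1,n}\to M_{1,n-1}$, which realizes $M_{1,n}$ as the complement of the $n-1$ tautological sections $\sigma_1,\dots,\sigma_{n-1}$ inside the universal elliptic curve $\mathcal{E}_{n-1}\to M_{1,n-1}$. The localization exact sequence reads
$$\bigoplus_{i=1}^{n-1}A_*(M_{1,n-1})\longrightarrow A_*(\mathcal{E}_{n-1})\longrightarrow A_*(M_{1,n})\longrightarrow 0,$$
and the induction hypothesis gives $\mathbb{Q}^{n-1}$ on the left. The theorem thus reduces to showing that $A_*(\mathcal{E}_{n-1})$ is spanned by the section classes together with the pull-back $\pi^*[M_{1,n-1}]$.

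To control the Chow group of the universal elliptic curve I would invoke the parallel rationality statement proved by Belorousski: for $n\leq 10$, realizations of a generic $n$-pointed elliptic curve as a plane cubic with prescribed flex and $n$ marked points yield a birational identification of $M_{1,n}$ with an open subvariety of $\mathbb{A}^n$. Pulling back the rational equivalences that trivialize Chow classes on $\mathbb{A}^n$ through this birational model provides enough relations on $\mathcal{E}_{n-1}$ to collapse every candidate cycle class onto a combination of sections and the pull-back of the fundamental class, so the exact sequence above has cokernel $\mathbb{Q}$.

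The main obstacle is precisely this last step and is not formal. An elliptic fibration typically carries additional cycle classes — translates of the zero section, multi-sections, relative zero-cycles of nonzero degree — and killing them requires genuine rational equivalences on the total space. The bound $n\leq 10$ is sharp, since for $n\geq 11$ the variety $M_{1,n}$ is no longer unirational and these extra classes survive; accordingly the argument must make essential use of the explicit parametrization available only in the stated range, rather than invoking any general Chow-group machinery.
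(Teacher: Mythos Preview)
Your inductive framework via the universal elliptic curve and the localization sequence is formally correct, but it does not actually reduce the problem: since $\mathcal{E}_{n-1}\setminus\bigcup\sigma_i\cong M_{1,n}$, computing $A_*(\mathcal{E}_{n-1})$ is exactly as hard as computing $A_*(M_{1,n})$ together with the section classes. The induction hypothesis on $M_{1,n-1}$ gives you nothing about the Chow groups of the total space of a nontrivial genus-one fibration over it, so the sequence you write down is a reformulation rather than a reduction.

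The substantive gap is the one you yourself flag. Rationality of $M_{1,n}$ alone does not imply $A_*(M_{1,n})=\mathbb{Q}$: a birational map is an isomorphism only over a dense open, and the localization sequence then shifts the entire burden to the Chow groups of the complement, which you never analyze. ``Pulling back rational equivalences through the birational model'' is not a valid operation on cycle classes outside the locus where the map is defined. The paper does not give a proof (it is a survey citing Belorousski's thesis), but Belorousski's actual argument is a case-by-case construction for each $n\le 10$: he exhibits explicit geometric models (plane cubics with a flex, then higher-degree models as $n$ grows) and controls not merely a birational chart but the full closed complement, showing that each stratum contributes only the expected classes. That stratification-level control is precisely the content of the theorem, and it is absent from your sketch. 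Your last paragraph correctly identifies this as the obstacle, but the proposal as written stops exactly where the real work begins.
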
 

After proving this result he considers the compactified moduli spaces $\overline{M}_{1,n}$ for the case $1 \leq n \leq 10$. 
Belorousski proves the following fact about the generators of the Chow groups:

\begin{thm}
For $1 \leq n \leq 10$ the Chow group $A_*(\overline{M}_{1,n})$ is spanned by boundary cycles.
\end{thm}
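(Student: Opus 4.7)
The plan is to combine Theorem \ref{m} with the standard localization (excision) exact sequence for Chow groups. Write $j \colon M_{1,n} \into \M_{1,n}$ for the open inclusion and $i \colon \partial \M_{1,n} \into \M_{1,n}$ for the closed complementary embedding, where $\partial \M_{1,n} = \M_{1,n} \setminus M_{1,n}$ is the boundary divisor parametrizing stable pointed curves with at least one node. One then has the right-exact sequence
$$A_*(\partial \M_{1,n}) \xrightarrow{\,i_*\,} A_*(\M_{1,n}) \xrightarrow{\,j^*\,} A_*(M_{1,n}) \longrightarrow 0.$$

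By Theorem \ref{m}, for $1 \leq n \leq 10$ one has $A_k(M_{1,n}) = 0$ in every dimension $k < n$, while $A_n(M_{1,n}) = \mathbb{Q} \cdot [M_{1,n}]$ is generated by the fundamental class. Exactness then forces $A_k(\M_{1,n}) = i_*A_k(\partial \M_{1,n})$ for every $k < n$, so every class below top dimension on $\M_{1,n}$ is represented by a cycle supported on the boundary divisor; together with $[\M_{1,n}]$ in top dimension this yields the claimed spanning statement.

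To make the conclusion concrete in terms of the standard boundary classes, I would unpack $i_*$ using the stratification $\partial \M_{1,n} = \delta_{\mathrm{irr}} \cup \bigcup_S \delta_{0,S}$, where $S$ ranges over subsets of $\{1,\dots,n\}$ with $|S| \geq 2$ and each $\delta_{0,S}$ is finitely covered by a product of moduli spaces of pointed stable curves of strictly smaller total moduli dimension. Pushforward along those covers, combined with induction on dimension, expresses every class in the image of $i_*$ as a $\mathbb{Q}$-linear combination of intersections of boundary divisors, i.e.\ as a genuine boundary cycle on $\M_{1,n}$.

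The genuine difficulty in this argument sits entirely in Theorem \ref{m}: proving that $A_*(M_{1,n}) = \mathbb{Q}$ for $1 \leq n \leq 10$. Belorousski establishes this by exploiting rationality of $M_{1,n}$ in this range together with an inductive analysis along the forgetful/universal-curve morphism $M_{1,n+1} \to M_{1,n}$. The numerical bound is essential, since for $n \geq 11$ non-boundary middle-dimensional cycles are known to appear and the vanishing of open Chow groups fails; this is what cuts off the present theorem at $n=10$. Granting Theorem \ref{m}, however, the excision step described above is essentially formal.
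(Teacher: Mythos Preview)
Your argument is correct and matches the paper's own: stratify $\overline{M}_{1,n}$ by topological type, apply excision, and use Theorem~\ref{m} on the open strata to force everything below top dimension onto the boundary, then recurse. The only point left implicit in your sketch is the genus-zero input needed for the $\overline{M}_{0,m}$ factors appearing in $\delta_{\mathrm{irr}}$ and in the $\delta_{0,S}$ (Keel's theorem, or equivalently $A_*(M_{0,m})=\mathbb{Q}$), but with that noted the two arguments coincide.
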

The method of proving the statement is to consider the standard stratification of the moduli space $\overline{M}_{1,n}$ 
by the topological type of the stable curves. 
In this part he uses the result of Theorem \ref{m} about the Chow groups of the open parts. 
Belorousski also computes the intersection ring of $\overline{M}_{1,n}$ for $n=3,4$. 
In each case the intersection ring coincides with the tautological algebra and the intersection pairings are perfect.
Belorousski obtains partial results on the intersection rings of $\overline{M}_{1,5}$
and $\overline{M}_{1,6}$. He shows that the Chow ring of $\overline{M}_{1,5}$ is generated by boundary divisors. 
In the study of the Chow ring of $\overline{M}_{1,4}$ there is a codimension two relation which plays an important
role. This relation first appeared in the article \cite{getzler_intersection} by Getzler.
In the third chapter of his thesis Faber computes the Chow ring of the moduli space $\overline{M}_{2,1}$.
He proves that the dimensions of the Chow groups $A^k(\overline{M}_{2,1})$ for $k=0, \dots , 4$ are 1,3,5,3,1 respectively.
Faber describes the Chow ring of $\overline{M}_{2,1}$ as an algebra generated by tautological classes 
with explicit generators and relations. He also proves that the pairings 
$$A^k(\overline{M}_{2,1}) \times A^{4-k}(\overline{M}_{2,1}) \rightarrow \mathbb{Q}$$ are perfect for all $k$.

\subsection{Witten's conjecture}
Define $F_g$ by 
$$F_g:=\sum_{n \geq 0} \frac{1}{n!} \sum_{k_1, \dots , k_n} \left( \int_{\overline{M}_{g,n}} \psi_1^{k_1} \dots \psi_n^{k_n} \right) t_{k_1} \dots t_{k_n},$$
the generating function for all top intersections of $\psi$-classes in genus $g$. 
Define a generating function for all such 
intersections in all genera by $$F:=\sum_{g=0}^{\infty} F_g \lambda^{2g-2}.$$
This is Witten's free energy or the Gromov-Witten potential of a point. 
Witten's conjecture (Kontsevich's Theorem) gives a recursion for top intersections of $\psi$-classes in the form of a partial differential equation satisfied by $F$. Using this differential equation along with a geometric fact known as the string equation and the initial condition $\int_{\overline{M}_{0,3}}1=1$, all top intersections are recursively determined.
Witten's conjecture (Kontsevich's Theorem) is:
$$(2n+1)\frac{\partial ^3}{\partial t_n \partial t_0^2}F= \left( \frac{\partial^2}{\partial t_{n-1} \partial t_0} F \right)
\left( \frac{\partial^3}{\partial t_0^3} F \right) + 2\left( \frac{\partial^3}{\partial t_{n-1} \partial t_0^2} F \right)
\left( \frac{\partial^2}{\partial t_0^2} F \right)+ \frac{1}{4} \frac{\partial^5}{\partial t_{n-1} \partial t_0^4}F.$$
This conjecture was stated in \cite{witten_two} by Witten. 
The first proof of Witten's conjecture was given by Kontsevich \cite{kontsevich_intersection}. 
There are many other proofs, e.g., by Okounkov-Pandharipande \cite{okounkov_pandharipande_hurwitz}, Mirzakhani \cite{mirzakhani_weil}, and Kim-Liu \cite{kim_liu_simple}. For more details about Witten's conjecture see 
\cite{vakil_moduli, vakil_gromov_witten}.
From Witten-Kontsevich one can compute all intersection numbers of the 
$n$ cotangent line bundles on the moduli
space $\overline{M}_{g,n}$. In \cite{faber_algorithms} it is proven that the knowledge of these numbers suffices for the computation of other intersection numbers, when arbitrary boundary divisors are included. 
An algorithm for the computation 
of these numbers is given in \cite{faber_algorithms}. 

%\subsection{Virasoro conjecture}

%\subsection{ELSV formula}

\part*{Tautological rings of moduli spaces of curves}
While it is important to understand the structure of the whole intersection ring of the 
moduli spaces of curves, their study seems to be a difficult question at the moment. 
In fact the computation of the Chow rings becomes quite involved even in genus one. 
On the other hand the subalgebra of the Chow ring generated by the tautological classes seems 
to be better behaved and have an interesting conjectural structure. 
The tautological rings are defined to be the subalgebras of the rational Chow rings generated by the tautological classes. 
Faber has computed the tautological rings of the moduli spaces of curves in many cases. 
In \cite{faber_conjectural} he formulates a conjectural description of the tautological ring of $M_g$. 
There are similar conjectures by Faber and Pandharipande about the structure of 
the tautological algebras in the more general setting
when stable curves with markings are included as well. 
According to the Gorenstein conjectures the tautological algebras satisfy a form of Poincar\'e duality. 
In the following, we recall the relevant  definitions and basic facts about the tautological ring of the 
moduli spaces of curves and their conjectural structure. 
We review the special case of $M_g$ more closely. 
Then we discuss the other evidence in low genus when we consider curves with markings. 

\subsection{The tautological ring of $\overline{M}_{g,n}$}
The original definition of the tautological classes on the moduli spaces $M_g$ and $\overline{M}_g$ is due to Mumford.
There is a natural way to define the tautological algebras 
for the more general situation of stable $n$-pointed curves.
In \cite{faber_pandharipande_relative} the system of tautological rings is defined to 
be the set of smallest $\mathbb{Q}$-subalgebras of the Chow rings, $$R^*(\overline{M}_{g,n}) \subset A^*(\overline{M}_{g,n}),$$satisfying the following two properties:

\begin{itemize}
\item
The system is closed under push-forward via all maps forgetting markings: 
$$\pi_*:R^*(\overline{M}_{g,n}) \rightarrow R^*(\overline{M}_{g,n-1}).$$

\item
The system is closed under push-forward via all gluing maps: 
$$\iota_*: R^*(\overline{M}_{g_1,n_1 \cup \{*\}}) \otimes  R^*(\overline{M}_{g_2,n_2 \cup \{ \bullet \}})  \rightarrow R^*(\overline{M}_{g_1+g_2,n_1+n_2}),$$
$$\iota_*: R^*(\overline{M}_{g,n \cup \{ *,\bullet \}}) \rightarrow R^*(\overline{M}_{g+1,n}),$$ 
with attachments along the markings $*$ and $\bullet.$
\end{itemize}

\begin{rem}
It is important to notice that natural algebraic constructions of cycles on the moduli space yield Chow classes in the tautological ring. The first class of such cycles comes from the cotangent line classes. 
For each marking $i$, there is a line bundle $\mathbb{L}_i$ on the moduli space $\overline{M}_{g,n}$. 
The fiber of $\mathbb{L}_i$ at the moduli point $[C;x_1, \dots , x_n]$ is the cotangent space to 
$C$ at the $i^{th}$ marking.
This gives the divisor class $$\psi_i=c_1(\mathbb{L}_i) \in A^1(\overline{M}_{g,n}).$$
The second class comes from the push-forward of powers of $\psi$-classes:
$$\kappa_i=\pi_*(\psi_{n+1}^{i+1}) \in A^i(\overline{M}_{g,n}),$$
where $\pi:\overline{M}_{g,n+1} \rightarrow \overline{M}_{g,n}$ forgets the last marking on the curve. 
Another class of important cycles on $\overline{M}_{g,n}$ is obtained from the Hodge bundle. 
The Hodge bundle $\mathbb{E}$ over $\overline{M}_{g,n}$ is the rank $g$ vector bundle whose fiber 
over the moduli point $[C;x_1, \dots , x_n]$ is the vector space $H^0(C,\omega_C)$, where $\omega_C$ 
is the dualizing sheaf of $C$. 
The Chern classes of the Hodge bundle give the lambda classes:
$$\lambda_i=c_i(\mathbb{E}) \in A^i(\overline{M}_{g,n}).$$
It is a basic fact that all the $\psi,\kappa,\lambda$ classes belong to the tautological ring. 
\end{rem}

\subsection{Additive generators of the tautological rings}
It is possible to give a set of additive generators for the tautological algebras indexed by the boundary cycles. 
This also shows that the tautological groups have finite dimensions.
The boundary strata of the moduli spaces of curves correspond
to {\em stable graphs} $$A=(V, H,L, g:V \rightarrow {\mathbb Z}_{\geq 0}, a:H\rightarrow V, i: H\rightarrow H)$$
satisfying the following properties:
\begin{enumerate}
\item[(1)] $V$ is a vertex set with a genus function $g$,
\item[(2)] $H$ is a half-edge set equipped with a
vertex assignment $a$ and fixed point free involution
$i$,
\item[(3)] $E$, the edge set, is defined by the
orbits of $i$ in $H$, 
\item[(4)] $(V,E)$ define a {\em connected} graph,
\item[(5)] $L$ is a set of numbered legs attached to the vertices,
\item[(6)] For each vertex $v$, the stability condition holds:
$$2g(v)-2+ n(v) >0,$$
where $n(v)$ is the valence of $A$ at $v$ including both half-edges and legs.
\end{enumerate}

Let $A$ be a stable graph. The genus of $A$ is defined by
$$g= \sum_{v\in V} g(v) + h^1(A).$$
Define the moduli space
$\overline{M}_A$ by the product
$$\overline{M}_A =\prod_{v\in V(A)} \overline{M}_{g(v),n(v)}.$$

There is a canonical way to construct a family of stable $n$-pointed curves of genus 
$g$ over $\overline{M}_A$ using the universal families over each of the factors $\overline{M}_{g(v),n(v)}$. 
This gives the canonical morphism
$$\xi_A: \overline{M}_A \rightarrow \overline{M}_{g,n}$$
whose image is the boundary stratum associated to the graph $A$ and defines a tautological class
$$\xi_{A*}[\overline{M}_A] \in R^*(\overline{M}_{g,n}).$$

A set of additive generators for the tautological ring of $\overline{M}_{g,n}$ is obtained as follows: 
Let $A$ be a stable graph of genus $g$ with $n$ legs. For each vertex $v$ of $A$, let 
$$\theta_v \in R^*(\overline{M}_{g(v),n(v)})$$
be an arbitrary monomial in the $\psi$ and $\kappa$ classes. 
It is proven in \cite{graber_pandharipande_constructions} that these classes give a set of generators for the tautological algebra:

\begin{thm}\label{maude}
 $R^*(\overline{M}_{g,n})$ is generated additively by classes of the form
$$\xi_{A*}\Big(\prod_{v\in V(A)} \theta_v\Big).$$
\end{thm}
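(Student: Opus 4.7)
The plan is to establish both inclusions between $R^*(\overline{M}_{g,n})$ and the $\mathbb{Q}$-linear span $S$ of the proposed generators. The inclusion $S \subseteq R^*(\overline{M}_{g,n})$ is immediate: on each factor $\overline{M}_{g(v),n(v)}$ of $\overline{M}_A$, the decoration $\theta_v$ is a polynomial in $\psi$- and $\kappa$-classes, which are tautological by the preceding remark; the external product $\prod_v \theta_v$ is tautological on $\overline{M}_A$; and $\xi_A$ factors as a composition of the two elementary gluing morphisms used to define the tautological system, so the push-forward lies in $R^*(\overline{M}_{g,n})$.

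For the reverse inclusion, since $R^*$ is by definition the smallest system of subalgebras closed under forgetful and gluing push-forwards, it suffices to verify that $S$ is closed under multiplication, forgetful push-forward, and gluing push-forward (and contains the fundamental class and the $\psi$, $\kappa$ classes themselves, which arise from the trivial one-vertex graph). Closure under gluing push-forward is essentially graphical: the image of a product of decorated strata under $\iota_*$ is again a decorated stratum on the graph obtained by identifying the matched half-edges, with the decorations redistributed at the relevant vertices. Closure under a forgetful map $\pi\colon \overline{M}_{g,n+1} \to \overline{M}_{g,n}$ is handled by locating the forgotten marking at a unique vertex $v_0$ of $A$; the restriction $\pi|_{\overline{M}_A}$ is, after contracting an unstable genus-zero component if necessary, a forgetful map on the factor at $v_0$. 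The push-forward of the decoration $\theta_{v_0}$ is then computed using the standard identities $\pi_*(\psi_{n+1}^{k+1}) = \kappa_k$, $\pi^*\psi_i = \psi_i - D_{i,n+1}$ and $\pi^*\kappa_i = \kappa_i - \psi_{n+1}^i$, and the result is a $\mathbb{Q}$-linear combination of decorated stratum classes, hence in $S$.

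The main obstacle is closure of $S$ under the intersection product. By the projection formula it suffices to expand $\xi_A^* \xi_{A'*}(\prod \theta'_v)$ as a combination of decorated stratum classes on $\overline{M}_A$. This is carried out via the excess intersection formula applied to the fiber square of $\xi_A$ and $\xi_{A'}$: the refined fiber product decomposes as a disjoint union of boundary strata $\overline{M}_\Gamma$ indexed by the common refinements $\Gamma$ of the graphs $A$ and $A'$, and on each component the excess normal bundle contribution expands as a polynomial in $\psi$-classes attached to the new nodes, via the self-intersection formula identifying the conormal bundle of a boundary divisor at a node with the tensor product of the two cotangent lines. Combining this with the pull-back identities above for $\psi$- and $\kappa$-decorations through the gluing maps, the product is rewritten as a sum of decorated stratum classes on $\overline{M}_A$, and push-forward by $\xi_A$ returns it to the required form. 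The principal technical work is the combinatorial enumeration of common refinements and the bookkeeping of their excess contributions; no geometric input beyond these standard formulas is required.
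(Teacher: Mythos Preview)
Your outline is correct and follows essentially the approach of Graber and Pandharipande in \cite{graber_pandharipande_constructions}, which is precisely the reference the paper cites for this theorem; the paper is a survey and does not supply its own proof. One minor imprecision worth tightening: the pull-back identities you list ($\pi^*\psi_i = \psi_i - D_{i,n+1}$ and $\pi^*\kappa_i = \kappa_i - \psi_{n+1}^i$) are those for forgetful maps, whereas in the multiplication step you also need the pull-back of $\psi$- and $\kappa$-decorations along the gluing maps $\xi_A$ (namely that $\psi$-classes at marked points pull back to $\psi$-classes on the corresponding factor, and $\kappa_i$ pulls back to the sum of the $\kappa_i$ on the factors). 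This is standard and does not affect the validity of your argument.
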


By the dimension grading, the list of generators provided
by Theorem \ref{maude} is finite. 
Hence, we obtain the following result.

\begin{cor} \label{fdr}
We have
$\dim_{\mathbb{Q}} \, R^*(\overline{M}_{g,n}) < \infty$.
\end{cor}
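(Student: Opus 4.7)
The plan is to deduce the corollary directly from Theorem \ref{maude} by showing that the set of generators provided there, when cut off by the dimension grading, is finite. Since $\overline{M}_{g,n}$ is a proper Deligne-Mumford stack of dimension $3g-3+n$, the tautological ring $R^*(\overline{M}_{g,n})$ vanishes in degrees above $3g-3+n$. So it suffices to show that for each fixed codimension $k \leq 3g-3+n$ there are only finitely many generators of the form $\xi_{A*}\bigl(\prod_{v \in V(A)} \theta_v\bigr)$ of that codimension.

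First I would argue that there are only finitely many stable graphs $A$ of genus $g$ with $n$ legs. The stability condition $2g(v)-2+n(v)>0$ at each vertex $v$ forces $n(v) \geq 1$ whenever $g(v) \geq 1$ and $n(v)\geq 3$ when $g(v)=0$. Combined with the genus identity $g = \sum_v g(v) + h^1(A)$, this bounds the number of vertices of positive genus by $g$, bounds $h^1(A)$ (hence the number of independent cycles) by $g$, and the stability of genus-zero vertices then bounds the total number of vertices and edges in terms of $g$ and $n$; up to isomorphism only finitely many such decorated graphs exist.

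Next, for each stable graph $A$, the moduli space $\overline{M}_A = \prod_{v \in V(A)} \overline{M}_{g(v),n(v)}$ is proper of dimension $\sum_v (3g(v)-3+n(v))$, which is finite. A monomial $\theta_v$ in the finitely many classes $\psi_1,\dots,\psi_{n(v)}$ and $\kappa_1,\dots,\kappa_{3g(v)-3+n(v)}$ on the factor $\overline{M}_{g(v),n(v)}$ can contribute nontrivially to the Chow ring only if its total degree does not exceed $\dim \overline{M}_{g(v),n(v)}$. There are only finitely many such monomials per vertex, hence finitely many choices of $\prod_v \theta_v$ per graph with total degree bounded by $3g-3+n - \mathrm{codim}(\xi_A)$.

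Combining the two finiteness statements, the list of generators from Theorem \ref{maude} of codimension at most $3g-3+n$ is finite, so $R^*(\overline{M}_{g,n})$ is spanned as a $\mathbb{Q}$-vector space by finitely many classes and is therefore finite-dimensional. The only step requiring any care is the combinatorial bookkeeping that bounds the number of stable graphs; the monomial count and the dimension cutoff are immediate once one accepts that the factors $\overline{M}_{g(v),n(v)}$ are finite-dimensional.
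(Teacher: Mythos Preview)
Your proof is correct and follows exactly the approach indicated in the paper, which simply remarks that ``by the dimension grading, the list of generators provided by Theorem \ref{maude} is finite'' before stating the corollary; you have merely spelled out the combinatorial details behind that sentence. One small slip: the stability condition does not force $n(v)\geq 1$ when $g(v)\geq 2$ (it allows $n(v)=0$), but this is harmless since such a vertex can only occur as the unique vertex of the trivial graph, and your bound on the number of positive-genus vertices via $\sum_v g(v)\leq g$ already handles it.
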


\subsection{Gromov-Witten theory and the tautological algebra}
In \cite{faber_pandharipande_relative} Faber and Pandharipande present two geometric methods of constructing tautological classes.
The first method is based on the push-forward of the virtual classes on the moduli spaces of stable maps.
Recall that for a nonsingular projective variety $X$ and a homology class $\beta \in H_2(X, \mathbb{Z})$,
$\overline{M}_{g,n}(X,\beta)$ denotes the moduli space of stable maps representing the class $\beta$.
The natural morphism to the moduli space is denoted by 
$$\rho:\overline{M}_{g,n}(X,\beta) \rightarrow \overline{M}_{g,n},$$
when $2g-2+n>0$. The moduli space $\overline{M}_{g,n}(X,\beta)$
carries a virtual class 
$$[\overline{M}_{g,n}(X,\beta)]^{vir} \in A_*(\overline{M}_{g,n}(X,\beta))$$
obtained from the canonical obstruction theory of maps. 
For a Gromov-Witten class 
$$\omega \in A^*(\overline{M}_{g,n}(X,\beta))$$ 
consider the following push-forward:
$$\rho_* \left( \omega \cap [\overline{M}_{g,n}(X,\beta)]^{vir} \right) \in A_*(\overline{M}_{g,n}).$$
The localization formula for the virtual class \cite{graber_pandharipande_localization} shows that the push-forwards of all Gromov-Witten
classes of compact homogeneous varieties $X$ belong to the tautological ring. 
 
\subsection{The moduli spaces of Hurwitz covers of $\mathbb{P}^1$}
Another tool of geometric construction of tautological classes is using the moduli spaces of Hurwitz 
covers of the projective line. Let $g \geq 0$ and let $\mu^1, \dots , \mu^m$ be $m$ partitions of equal size $d$
satisfying $$2g-2+2d=\sum_{i=1}^m \left( d - \ell (\mu^i) \right),$$
where $\ell(\mu^i)$ denotes the length of the partition $\mu^i$. The moduli space of Hurwitz covers,
$$H_g(\mu^1, \dots , \mu^m)$$ 
parametrizes morphisms 
$$f:C \rightarrow \mathbb{P}^1,$$
where $C$ is a complete, connected, nonsingular curve with marked profiles 
$\mu^1, \dots , \mu^m$ over $m$ ordered points of the target and no ramifications elsewhere. 
The isomorphisms between Hurwitz covers are defined in a natural way. 
The moduli space of admissible covers introduced in \cite{harris_mumford_kodaira} compactifies the moduli space of Hurwitz covers and
contains it as a dense open subset:
$$H_g(\mu^1, \dots , \mu^m) \subset \overline{H}_g(\mu^1, \dots , \mu^m).$$
Let $\rho$ denote the map to the moduli space of curves,
$$\rho: \overline{H}_g(\mu^1, \dots , \mu^m) \rightarrow \overline{M}_{g, \sum_{i=1}^m \ell(\mu^i)}.$$
The push-forward of the fundamental classes of the moduli spaces of admissible covers gives algebraic cycles
on the moduli spaces of curves. 
In \cite{faber_pandharipande_relative} it is proven that these push-forwards are tautological. 
They also study \emph{stable relative maps} which combine features of stable maps and admissible covers. 
For the definition see \cite{faber_pandharipande_relative}. We briefly recall their properties and their connection with the tautological algebras.
There are stable relative maps to a parameterized and unparameterized $\mathbb{P}^1$.
In each case the moduli space is a Deligne-Mumford stack and admits a virtual fundamental class in the expected dimension. There are natural cotangent bundles on these moduli spaces and canonical morphisms to the moduli spaces of curves. 
The notion of \emph{relative Gromov-Witten class} is defined for the moduli space of stable relative maps. 
The compatibility of Gromov-Witten classes on the moduli of stable relative maps and tautological 
classes on the moduli of curves in both the parametrized and unparametrized cases is proven. 
The push-forwards of these classes to the moduli of curves are shown to be tautological. 
As a special case one gets the following result:

\begin{prop}\label{hur}
The moduli of Hurwitz covers yields tautological classes,
$$\rho_*\left( \overline{H}_g(\mu^1, \dots , \mu^m) \right) \in R^*( \overline{M}_{g, \sum_{i=1}^m \ell(\mu^i)}).$$
\end{prop}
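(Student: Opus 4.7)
The plan is to deduce this as a special case of the compatibility between stable relative maps and the tautological algebra that Faber and Pandharipande establish just before the statement. Concretely, I would realize $\Hb_g(\mu^1,\dots,\mu^m)$ as (a component of) the moduli space of stable relative maps to an \emph{unparameterized} $\PP^1$ with prescribed ramification profiles $\mu^1,\dots,\mu^m$ over $m$ marked points of the target. Over the open locus where the target is a smooth $\PP^1$ and the source is nonsingular, such a stable relative map is precisely a Hurwitz cover with profiles $\mu^i$; the boundary of the admissible cover compactification, obtained by allowing nodal sources over nodal targets with matching ramification indices, is exactly what the expanded-degeneration formalism of stable relative maps produces. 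The canonical morphism to $\M_{g,\sum\ell(\mu^i)}$ in both models is the forgetful map that records the source curve with its marked pre-images of the branch points, so the map $\rho$ in the proposition coincides with the canonical morphism appearing in the stable-relative-map setup.

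Second, I would argue that the virtual class on this moduli space equals its ordinary fundamental class. The numerical hypothesis
$$2g-2+2d=\sum_{i=1}^m \bigl(d-\ell(\mu^i)\bigr)$$
is the Riemann--Hurwitz equality, so the expected dimension of the relative stable map space agrees with the actual dimension of $\Hb_g(\mu^1,\dots,\mu^m)$. Because the target is a (rubber) rational curve and the generic cover is \'etale outside the branch locus, the relevant obstruction sheaf $R^1 f_* f^* T_{\PP^1}(-\sum \mathrm{ram})$ vanishes generically and, in fact, globally on the admissible cover stack by a standard computation with the Riemann--Roch theorem on each irreducible component together with the matching conditions at the nodes. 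Hence $[\Hb_g(\mu^1,\dots,\mu^m)]^{\mathrm{vir}} = [\Hb_g(\mu^1,\dots,\mu^m)]$ (up to the usual rational factors coming from automorphisms of the rubber target), so the push-forward in the statement coincides with the push-forward of a relative Gromov--Witten class.

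Finally, the previously cited theorem of \cite{faber_pandharipande_relative}, asserting that push-forwards of virtual classes under the canonical morphism from moduli of stable relative maps to $\M_{g,n}$ are tautological, applies verbatim. This gives
$$\rho_*\bigl[\Hb_g(\mu^1,\dots,\mu^m)\bigr] \in R^*\bigl(\M_{g,\sum_{i=1}^m \ell(\mu^i)}\bigr),$$
as desired.

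The main technical obstacle is the middle step: one has to set up the identification of admissible covers with the unparameterized relative stable maps carefully enough that the two virtual/fundamental classes can be compared on the nose, including the correct rational weights coming from the $\mathrm{Aut}$-gerbes of both stacks and from the contracted components of the expanded target. Once this bookkeeping is in place, the vanishing of the obstruction bundle is routine, and the tautological conclusion is immediate from the general theorem.
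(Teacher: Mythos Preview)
Your proposal is correct and follows exactly the route the paper indicates: the paper does not prove this proposition directly but presents it as a special case of the Faber--Pandharipande result that push-forwards of relative Gromov--Witten classes from moduli of stable relative maps (to unparameterized $\PP^1$) are tautological, which is precisely the reduction you carry out. Your added step identifying the virtual class with the fundamental class of $\Hb_g(\mu^1,\dots,\mu^m)$ is the right bridge, and is what the original reference \cite{faber_pandharipande_relative} does as well.
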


An important role of the moduli spaces of admissible covers in the study of tautological classes on the moduli spaces of curves is the basic push-pull method for constructing relations in the tautological algebras.
Consider the two maps to the moduli of curves defined for the moduli space of admissible covers:

$$\label{pushpull}
\begin{CD}
\overline{H}_g(\mu^1 , \dots , \mu^m) @>{\lambda}>>\overline{M}_{g, \sum_{i=1}^n \ell(\mu^i)} \\ 
@V{\pi}VV \\
\overline{M}_{0,m} \\ 
\end{CD}$$

Let $r$ denote a relation among algebraic cycles in $\overline{M}_{0,m}$.
Then $$\lambda_* \pi^*(r),$$
defines a relation in $\overline{M}_{g,  \sum_{i=1}^n \ell(\mu^i)}$.
This method provides a powerful tool to prove algebraic relations in the Chow groups in many cases.
For example, Pandharipande in \cite{pandharipande_geometric} proves that the codimension two relation in $A^2(\overline{M}_{1,4})$,
which is called Getzler's relation, can be obtained by this method.
In \cite{belorousski_pandharipande_descendent} Belorousski and Pandharipande get a codimension two relation in $A^2(\overline{M}_{2,3})$
by considering the pull-back of a relation in $A^2(\overline{M}_{0,9})$ to a suitable moduli space of 
admissible covers and its push-forward to the moduli space $\overline{M}_{2,3}$.
In \cite{faber_pandharipande_relative} Faber and Pandharipande propose the following speculation about the push-pull method:

\begin{spec}
All relations in the tautological ring are obtained via the push-pull method and Proposition \ref{hur}.
\end{spec}

\subsection{The tautological ring of $M_g$}
We saw in Corollary \ref{lam} that the ring $R^*(M_g)$ 
is generated by the $g-2$ classes $\kappa_1, \dots , \kappa_{g-2}$. 
Let us see the explicit relation between the lambda and kappa classes as it is explained in \cite{faber_conjectural}:
\begin{itemize}
\item
Applying the Grothendieck-Riemann-Roch theorem to $\pi:C_g \rightarrow M_g$ and $\omega_{\pi}$ 
relates the Chern character of the Hodge bundle $\mathbb{E}$ in terms of the $\kappa_i$. 
The formula as an identity of formal power series in $t$ is as follows:
$$\sum_{i=0}^{\infty} \lambda_i t^i= \exp \left( \sum_{i=1}^{\infty} \frac{B_{2i} \kappa_{2i-1}}{ 2i(2i-1)} t^{2i-1} \right).$$ 
Here $B_{2i}$ are the Bernoulli numbers. 
This implies that all the $\lambda_i$ can be expressed in the odd $\kappa_i$.  
\item
Using the fact that the relative dualizing sheaf of a nonsingular curve is generated by its global sections, 
Mumford shows that the natural map $\pi^* \mathbb{E} \rightarrow \omega$ of locally free sheaves on $C_g$ is surjective. 
This gives the following vanishing result: 
$$c_j(\pi^* \mathbb{E} - \omega)=0 \qquad \forall j \geq g.$$ 
The desired relation between the lambda's and the kappa's is proven from the analysis of the push-forward of the relations above. 
\end{itemize}

An important property of the tautological ring of $M_g$ is the vanishing result proven by Looijenga \cite{looijenga_tautological}.
He defines the tautological ring of $C_g^n$, the $n$-fold fiber product of the universal curve 
$C_g$ over $M_g$, 
as a subring of $A^*(C_g^n)$ generated by the divisor classes $K_i:=pr_i^* K$, 
which is the pull-back of the class $K=c_1(\omega_{\pi})$ along the projection $pr_i:C_g^n \rightarrow C_g$ to 
the $i^{th}$ factor, and the classes $D_{i,j}$ of the diagonals $x_i=x_j$
and the pull-backs from $M_g$ of the $\kappa_i$. 
He proves that the tautological groups vanish in degrees greater than $g-2+n$ and in degree $g-2+n$ 
they are at most one-dimensional and generated by the class of the locus 
$$H_g^n=\{ (C;x_1, \dots , x_n): C \ \text{hyperelliptic}; x_1= \dots =x_n=x,\ \text{a Weierstrass point}   \}.$$

The first implication of his result is the following:

\begin{thm}
$R^j(M_g)=0$ for all $j > g-2$ and $R^{g-2}(M_g)$ is at most one-dimensional, 
generated by the class of the locus of hyperelliptic curves. 
\end{thm}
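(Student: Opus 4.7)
The plan is to deduce this result as the specialization $n=0$ of Looijenga's vanishing theorem for $R^*(C_g^n)$ stated immediately above. First I would unpack the definitions to confirm that the two tautological rings agree in the degenerate case. When $n=0$, the fiber product $C_g^n$ reduces to $M_g$ itself, and Looijenga's generating set (the divisor classes $K_i$, the diagonal classes $D_{i,j}$, and the pull-backs of $\kappa_i$ from $M_g$) collapses to just the $\kappa_i$, which matches Mumford's original definition of $R^*(M_g)$. Hence $R^*(C_g^0) = R^*(M_g)$, and invoking the theorem with $n=0$ yields $R^j(M_g)=0$ for $j > g-2$ together with $\dim_{\mathbb{Q}} R^{g-2}(M_g) \leq 1$.

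Second, I would identify the generator. The class $H_g^n$ was defined as the locus where $C$ is hyperelliptic and $x_1=\dots=x_n$ is a Weierstrass point; for $n=0$ there are no markings and the Weierstrass condition is vacuous, so $H_g^0$ is precisely the locus of hyperelliptic curves in $M_g$. A quick dimension check is consistent: the hyperelliptic locus has dimension $2g-1$ inside the $(3g-3)$-dimensional space $M_g$, giving codimension exactly $g-2$.

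Third, I would verify that $[H]$ actually lies in $R^{g-2}(M_g)$, since Looijenga's statement only guarantees that any tautological class in top degree is a scalar multiple of $[H_g^0]$, not that $[H_g^0]$ itself is tautological. This is provided by Mumford's explicit formula recalled earlier in the excerpt, obtained by specializing the $W_l$ computation to $l=2$:
\[
[H]=\frac{1}{g+1}\left( (2^g-1)\kappa_{g-2} - \dots +(-1)^{g-2}(6g-6)\lambda_{g-2} \right),
\]
which exhibits $[H]$ as a polynomial in $\kappa$ and $\lambda$ classes on $M_g$. Combined with the upper bound from Looijenga, this proves $R^{g-2}(M_g)$ is at most one-dimensional and, if nonzero, is generated by the hyperelliptic class.

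The main obstacle is not the deduction itself but the reliance on Looijenga's theorem for $C_g^n$: its proof uses a nontrivial induction on $n$ via the projection $C_g^{n+1}\to C_g^n$, together with a careful analysis of how diagonal and cotangent classes interact with the pushforward. In our setting, however, all of that technology is packaged into the statement we are allowed to cite, so the remaining work is the essentially formal identification of generators described above plus the appeal to Mumford's formula to establish tautologicality of $[H]$.
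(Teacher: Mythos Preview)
Your proposal is correct and matches the paper's approach exactly: the paper presents this theorem simply as ``the first implication'' of Looijenga's result on $R^*(C_g^n)$, i.e., the specialization $n=0$, without further elaboration. Your third step (checking that $[H]$ is tautological via Mumford's formula) is more than the paper spells out, but note that Looijenga's statement as quoted already asserts that $R^{g-2+n}(C_g^n)$ is \emph{generated} by $[H_g^n]$, which presupposes tautologicality; so this extra verification, while harmless, is not strictly needed.
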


According to this vanishing result it is natural to ask whether the tautological ring has dimension one in top degree.
One approach in proving the one-dimensionality of the tautological group $R^{g-2}(M_g)$ is to show that the hyperelliptic locus $H_g$ is non-zero. This was observed by Faber in genus 3 due to the existence of complete curves in $M_3$ and in genus 4 by means of a calculation with test surfaces in 
$\overline{M}_4$. The following result gives the first proof of the one-dimensionality of $R^{g-2}(M_g)$ for every genus $g \geq 2$.   

\begin{thm}
The class $\kappa_{g-2}$ is non-zero on $M_g$. 
Hence $R^{g-2}(M_g)$ is one-dimensional. 
\end{thm}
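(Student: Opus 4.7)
The strategy is to detect $\kappa_{g-2}$ on $M_g$ by integrating it against a codimension-$(2g-1)$ class on $\overline{M}_g$ that vanishes on every boundary divisor. Any such class kills every tautological class pushed forward from $\partial \overline{M}_g$, so if its pairing with $\kappa_{g-2}$ is non-zero then the restriction of $\kappa_{g-2}$ to $M_g$ must itself be non-zero; combined with Looijenga's theorem this yields $R^{g-2}(M_g) = \mathbb{Q}\cdot \kappa_{g-2}$.

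The test class I would take is $\lambda_g\lambda_{g-1} \in R^{2g-1}(\overline{M}_g)$. To see that it vanishes on the boundary, consider each boundary divisor in turn. On the non-separating divisor $\delta_0$, the normalization sequence of a generic nodal curve exhibits the Hodge bundle as an extension of the pulled-back Hodge bundle from $\overline{M}_{g-1,2}$ (of rank $g-1$) by a trivial line bundle, so $\lambda_g|_{\delta_0}=0$. On a separating divisor $\delta_i$ with gluing map $\xi\colon \overline{M}_{i,1}\times\overline{M}_{g-i,1}\to \overline{M}_g$, the Hodge bundle pulls back to an external direct sum $\mathbb{E}_1\boxplus\mathbb{E}_2$, yielding
\[
\xi^*(\lambda_g\lambda_{g-1}) = c_i(\mathbb{E}_1)\,c_{i-1}(\mathbb{E}_1)\,c_{g-i}(\mathbb{E}_2)^{2} + c_i(\mathbb{E}_1)^{2}\,c_{g-i}(\mathbb{E}_2)\,c_{g-i-1}(\mathbb{E}_2).
\]
Mumford's identity $c(\mathbb{E})c(\mathbb{E}^\vee)=1$ on each of the factors $\overline{M}_{i,1}$ and $\overline{M}_{g-i,1}$ forces the square of the top Chern class of the Hodge bundle to vanish there, so both summands are zero. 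By the projection formula, $\phi(\alpha) := \int_{\overline{M}_g}\alpha\cdot \lambda_g\lambda_{g-1}$ annihilates every class pushed forward from $\partial\overline{M}_g$ and hence descends to a functional on $R^{g-2}(M_g)$.

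The second step is to evaluate $\phi(\kappa_{g-2})$. Writing $\kappa_{g-2}=\pi_*(\psi_1^{g-1})$ for the universal curve $\pi\colon\overline{M}_{g,1}\to\overline{M}_g$ and using $\pi^*\lambda_i=\lambda_i$, the projection formula reduces the evaluation to the Hodge integral
\[
\phi(\kappa_{g-2}) = \int_{\overline{M}_{g,1}} \psi_1^{g-1}\,\lambda_g\,\lambda_{g-1},
\]
whose value is an explicit non-zero rational number with a Bernoulli-number numerator, by the Faber--Pandharipande closed formula for $\lambda_g\lambda_{g-1}$-integrals.

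The main obstacle is this final step. The vanishing on the boundary is essentially formal once Mumford's relation is in hand, but showing that a specific top-degree Hodge integral is non-zero is substantial input. I would invoke the Faber--Pandharipande formula directly or, for a self-contained argument, carry out the $\mathbb{C}^*$-localization computation on $\overline{M}_{g,1}(\mathbb{P}^1,1)$, in which the fixed-locus contributions add up to an explicit non-zero Bernoulli-number expression; alternatively, Mumford's Grothendieck--Riemann--Roch expression for $\mathrm{ch}(\mathbb{E})$ reduces the integral to a pure $\psi$-class computation evaluable by Witten--Kontsevich.
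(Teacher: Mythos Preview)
Your proposal is correct and follows essentially the same route as the paper: Faber detects $\kappa_{g-2}$ by pairing it against $\lambda_{g-1}\lambda_g$ on $\overline{M}_g$, uses that this class vanishes on the boundary, and then evaluates the resulting Hodge integral $\int_{\overline{M}_g}\kappa_{g-2}\lambda_{g-1}\lambda_g=\frac{|B_{2g}|(g-1)!}{2^g(2g)!}$, which is non-zero since $B_{2g}\neq 0$. The only cosmetic difference is that the paper phrases the boundary vanishing via the observation that $\lambda_{g-1}\lambda_g$ is a multiple of $ch_{2g-1}(\mathbb{E})$, which restricts to zero on $\partial\overline{M}_g$, whereas you check each boundary divisor directly using Mumford's relation $c(\mathbb{E})c(\mathbb{E}^\vee)=1$.
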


To prove this Faber considers the classes $\kappa_i$ and $\lambda_i$ on the Deligne-Mumford compactification 
$\overline{M}_g$ of $M_g$. To relate the classes on $M_g$ to the classes on the compactified space he 
observes that the class $ch_{2g-1}(\mathbb{E})$ vanishes on the boundary $\overline{M}_g-M_g$ of the moduli space. 
Then he shows that the following identity on $\overline{M}_g$ holds:
$$\kappa_{g-2} \lambda_{g-1} \lambda_g=\frac{|B_{2g}|(g-1)!}{2^g (2g)!}.$$
The desired result follows since the Bernoulli number $B_{2g}$ does not vanish. 
Note that $\lambda_{g-1} \lambda_g$ is a multiple of $ch_{2g-1}(\mathbb{E})$.

\begin{rem} While the argument above gives a proof of the one-dimensionality of the 
tautological group in degree $g-2$, 
it is interesting to compute the class of the hyperelliptic locus in the tautological algebra. 
In  \cite{faber_pandharipande_logarithmic} Faber and Pandharipande obtain the following formula:
$$[H_g]=\frac{(2^{2g}-1) 2^{g-2}}{(2g+1)(g+1)!} \kappa_{g-2}.$$
\end{rem}

In \cite{faber_conjectural} Faber formulates several important conjectures about the structure of the tautological ring $R^*(M_g)$.
One of these conjectures says that the tautological algebras enjoy an interesting duality:

\begin{conj}\label{mg}
\begin{enumerate}

\item[(a)] The tautological ring $R^*(M_g)$ is Gorenstein with socle in degree $g-2$. 
For a fixed isomorphism $R^{g-2}(M_g)=\mathbb{Q}$ the natural pairing 
$$R^i(M_g) \times R^{g-2-i}(M_g) \rightarrow R^{g-2}(M_g) = \mathbb{Q}$$ is perfect.
\item[(b)] $R^*(M_g)$ behaves like the algebraic cohomology ring of a nonsingular projective variety of dimension $g-2$; 
i.e., it satisfies the Hard Lefschetz and Hodge Positivity properties with respect to the class $\kappa_1$. 
\end{enumerate}
\end{conj}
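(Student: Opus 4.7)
The plan is to first pin down a complete presentation of $R^*(M_g)$ as a quotient of a polynomial ring, then verify the Gorenstein statement in (a) by direct algebraic inspection, and finally attack (b) via a geometric realization on a smooth projective compactification. By Corollary \ref{lam}, $R^*(M_g)$ is generated by $\kappa_1,\dots,\kappa_{g-2}$, and Looijenga's vanishing together with the nonvanishing of $\kappa_{g-2}$ recalled above already establishes the socle shape $R^i(M_g)=0$ for $i>g-2$ and $\dim R^{g-2}(M_g)=1$. So the task for (a) reduces to describing the kernel of the surjection $\phi:\mathbb{Q}[\kappa_1,\dots,\kappa_{g-2}]\twoheadrightarrow R^*(M_g)$ and verifying that the resulting graded algebra is Gorenstein.

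For the upper bound on the ring, I would construct a candidate family of tautological relations via the push-pull method around Proposition \ref{hur}: pull back relations from $\overline{M}_{0,m}$ along admissible-cover diagrams to a moduli of Hurwitz covers, push forward to $\overline{M}_g$, and restrict to $M_g$. Consistent with the speculation quoted after Proposition \ref{hur}, the hope is that one obtains a Faber--Zagier/Pixton-type family whose quotient $Q=\mathbb{Q}[\kappa_1,\dots,\kappa_{g-2}]/(\text{these relations})$ is graded Gorenstein of socle degree $g-2$ by a purely combinatorial check at the level of the presentation, giving $\dim Q^i \geq \dim R^i(M_g)$ in every degree.

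For the lower bound, and hence the perfect pairing, one must exhibit for every $i\leq g-2$ enough degree-$i$ classes pairing nondegenerately with degree-$(g-2-i)$ classes against the socle generator $\kappa_{g-2}$. Candidate test classes come from two sources: Hurwitz-cover push-forwards $\rho_*[\overline{H}_g(\mu^1,\dots,\mu^m)]$ provided by Proposition \ref{hur}, and lambda-class monomials rewritten in the kappas via the Grothendieck--Riemann--Roch identity $\sum_{i\geq 0}\lambda_i\,t^i=\exp\bigl(\sum_{i\geq 1} B_{2i}\,\kappa_{2i-1}\,t^{2i-1}/(2i(2i-1))\bigr)$. The pairing matrix in each degree can then be computed by Faber's algorithm for intersection numbers, and the aim is to show it has full rank equal to $\dim Q^i$, thereby collapsing the upper and lower bounds and simultaneously yielding the perfect pairing claim.

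For part (b), the plan is to exploit the smooth projective cover $X\to\overline{M}_g$ from Looijenga's Prym-level construction quoted in the remark above. The Hard Lefschetz theorem and Hodge--Riemann bilinear relations hold on $X$ for any ample class; one would transfer them to $\overline{M}_g$ with $\mathbb{Q}$-coefficients by averaging over the Galois group, identify the image of a suitable polarization with $\kappa_1$ modulo boundary-supported corrections, and restrict to $M_g$. The main obstacle, however, is in fact Step~3 above: even with a conjecturally complete list of relations, showing exhaustiveness requires pairing matrices whose nondegeneracy is at present accessible only genus by genus, and it is precisely the failure of the analogous statement for $R^*(\overline{M}_{g,n})$ in certain ranges (known through work on stable graphs beyond the scope of the excerpt) that makes the restricted conjecture for $M_g$ so delicate.
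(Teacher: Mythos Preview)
The statement you are trying to prove is labelled \texttt{conj} in the paper, not \texttt{thm}: it is Faber's conjecture on $R^*(M_g)$, and the paper does \emph{not} prove it. What the paper records is the separate theorem, due to Faber, that Conjecture~\ref{mg} holds for $g<24$, obtained by explicit computation of the pairing matrices genus by genus. For $g\geq 24$ the conjecture is open; indeed the paper explains that in genus $24$ the Faber--Zagier relations (equivalently, the stable quotient relations) do \emph{not} cut out a Gorenstein quotient, so either $R^*(M_{24})$ is not Gorenstein or there are relations beyond those currently known. Your own final paragraph essentially concedes this: the ``main obstacle'' you identify---that nondegeneracy of the pairing matrices is only accessible genus by genus---is exactly why this remains a conjecture.

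Two specific gaps in the plan are worth naming. For part~(a), your Steps~2 and~3 presuppose both that the push--pull relations exhaust the ideal and that the resulting quotient is Gorenstein; neither is known, and the speculation after Proposition~\ref{hur} is explicitly flagged as a speculation. For part~(b), passing to Looijenga's smooth projective cover $X\to\overline{M}_g$ gives Hard Lefschetz and Hodge--Riemann on $H^*(X)$, but restricting to the open locus $M_g$ destroys compactness, and there is no mechanism to transport these statements to the tautological ring of a non-projective variety; moreover $\kappa_1$ is not known to be (the restriction of) an ample class in any sense that would make this work. So the proposal is a reasonable outline of what one would \emph{like} to do, but it is not a proof, and the paper offers none.
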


In \cite{faber_conjectural} it is also conjectured that the $[g/3]$ classes $\kappa_1, \dots , \kappa_{[g/3]}$ generate the ring, 
with no relation in degrees $\leq [g/3]$. 
An explicit conjectural formulas for the proportionalities in degree $g-2$ is also given. 
The following result is due to Faber:
\begin{thm}
The conjecture \ref{mg} is true for $g < 24$.
\end{thm}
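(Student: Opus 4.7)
The plan is to reduce the conjecture for each fixed $g$ in the range $2 \leq g < 24$ to a finite computation, and then to carry out that computation. By Corollary \ref{lam}, the ring $R^*(M_g)$ is generated by $\kappa_1,\dots,\kappa_{g-2}$, and Looijenga's vanishing theorem together with the non-vanishing of $\kappa_{g-2}$ (the theorem just stated) shows that $R^*(M_g)$ is graded in degrees $0,1,\dots,g-2$ with one-dimensional top piece. So for each $g$ in our range the ring is a finitely generated graded $\mathbb{Q}$-algebra concentrated in a bounded range of degrees, and part (a) of Conjecture \ref{mg} is a statement about finitely many finite-dimensional $\mathbb{Q}$-vector spaces.

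For the upper bound on dimensions, I would assemble as many tautological relations as are systematically available. Two sources are essential: (i) Mumford's relations, obtained by pushing forward $c_j(\pi^*\mathbb{E}-\omega)=0$ for $j\geq g$ via $\pi:C_g\to M_g$ after multiplying by monomials in $K$ and $\psi$-classes on fiber products, which I rewrite purely in terms of $\kappa$-classes using the Bernoulli-number formula $\sum \lambda_i t^i=\exp\!\bigl(\sum \frac{B_{2i}\kappa_{2i-1}}{2i(2i-1)}t^{2i-1}\bigr)$; and (ii) push-pull relations coming from admissible covers via the diagram displayed before Proposition \ref{hur}, applied to various partitions $\mu^1,\dots,\mu^m$ of genus $g$ and then restricted to $M_g$. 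In every degree $d\leq g-2$ I would symbolically reduce the vector space spanned by degree-$d$ monomials in $\kappa_1,\dots,\kappa_{g-2}$ modulo this collection of relations, producing an upper bound $\dim R^d(M_g)\leq N_d$ and an explicit candidate basis.

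For the lower bound, and to verify the Gorenstein property, I would compute the intersection pairing $R^d(M_g)\times R^{g-2-d}(M_g)\to R^{g-2}(M_g)=\mathbb{Q}$ by evaluating top monomials $\kappa_{i_1}\cdots\kappa_{i_k}$ of total degree $g-2$ against the chosen socle generator. Here I use the conjectural proportionalities of Faber \cite{faber_conjectural} in degree $g-2$, which are now theorems: they are computable explicitly either from the $\lambda_{g-1}\lambda_g$-identity $\kappa_{g-2}\lambda_{g-1}\lambda_g=|B_{2g}|(g-1)!/(2^g(2g)!)$ combined with the GRR expansion of $\lambda_{g-1}\lambda_g$, or from Witten-Kontsevich together with Faber's algorithm \cite{faber_algorithms} for pulling boundary contributions out of intersection numbers on $\M_g$. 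This yields a square matrix $P_d$ representing the pairing in each complementary pair of degrees. The Gorenstein conjecture reduces to the rank statement $\mathrm{rank}(P_d)=N_d$ for every $d\leq (g-2)/2$; if this holds, the upper bound from step (ii) is tight, the ring coincides with its Gorenstein quotient, and the perfect pairing follows automatically. Part (b) is then two further matrix computations: for Hard Lefschetz I check that the matrix of multiplication by $\kappa_1^{g-2-2d}$ from $R^d$ to $R^{g-2-d}$ is invertible, and for Hodge positivity I check that the associated symmetric form $(x,y)\mapsto \int \kappa_1^{g-2-2d}\,xy$ on the primitive part has the expected sign.

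The hardest step, and the one that controls whether the argument goes through at all in a given genus, is step (ii): producing \emph{enough} relations. The Mumford relations alone are known to be insufficient, and the push-pull relations grow combinatorially in the number of partitions considered; for $g$ approaching $24$ the ambient vector spaces become large (the number of partitions of $g-2$ with parts at most $g-2$), so matching the explicit Faber proportionalities requires a carefully engineered list of push-pull inputs, plus computer-algebra reduction. The reason the method stops at $g=23$ is precisely that for $g=24$ the known systematic families of tautological relations fail to cut the dimension down to the Gorenstein bound in some middle degree; for $g<24$ one verifies, case by case, that they do, and then all three statements — perfect pairing, Hard Lefschetz, and Hodge positivity — follow from the explicit matrices produced along the way.
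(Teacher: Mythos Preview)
Your overall architecture is exactly right and matches Faber's: bound $\dim R^d(M_g)$ from above by producing relations, bound it from below by computing the pairing matrix in degree $g-2$ via the known proportionalities, and check that the two bounds agree. That is how the verification is actually carried out.

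The gap is in your step (i)--(ii), the production of relations. You propose Mumford's relations $c_j(\pi^*\mathbb{E}-\omega)=0$ on $C_g$ together with push-pull relations from admissible covers. Neither of these is the source Faber actually uses, and it is not known that your combination suffices for all $g<24$. The push-pull speculation (that admissible covers yield \emph{all} tautological relations) is exactly that, a speculation; invoking it would make the argument circular in spirit, and in practice nobody has organized a finite list of Hurwitz inputs that provably cuts $R^*(M_g)$ down to the Gorenstein quotient for each $g\leq 23$.

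What Faber does instead is described immediately after the theorem in the paper: on $C_g^d$ he builds the rank-$d$ bundle $\mathbb{F}_d$ with fiber $H^0(C,K/K(-D))$ and Chern class $c(\mathbb{F}_d)=\prod_{i=1}^d (1+K_i-\Delta_i)$, and uses the vanishing $c_j(\mathbb{F}_d-\mathbb{E})=0$ for $d\geq 2g-1$, $j\geq d-g+1$, which encodes emptiness of $|K-D|$. Multiplying these by monomials in the $K_i$ and $D_{ij}$ and pushing down to $M_g$ gives what are often called the diagonal relations; it is precisely this family that Faber verified, by computer, generates the full ideal of relations for $g\leq 23$. Your ``Mumford relations on $C_g$'' are the degenerate case $d=1$ of this construction and are strictly weaker. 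Once you replace your relation sources by the $\mathbb{F}_d-\mathbb{E}$ relations, the rest of your plan (pairing matrices from the $\lambda_{g-1}\lambda_g$ evaluation, Hard Lefschetz and Hodge positivity as further rank and sign checks) goes through as you describe.
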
 

Faber gives the following interesting geometric method to obtain an important class of tautological relations:
First, he introduces certain sheaves on the spaces $C_g^d$. 
Consider the projection $\pi=\pi_{\{1, \dots , d\}}: C_g^{d+1} \rightarrow C_g^d$ that forgets the $(d+1)$-st
point. The sum of the $d$ divisors $D_{1,d+1}, \dots , D_{d,d+1}$ is denoted by $\Delta_{d+1}$:
$$\Delta_{d+1}=D_{1,d+1} + \dots + D_{d,d+1}.$$

The pull-back of $\omega$ on $C_g$ to $C_g^n$ via the projection to the $i^{th}$ factor is denoted by $\omega_i$
and we write $K_i$ for its class in the Chow group. The coherent sheaf $\mathbb{F}_d$ on $C_g^d$ is defined by the formula:
$$\mathbb{F}_d=\pi_*(\mathcal{O}_{\Delta_{d+1}} \otimes  \omega_{d+1}).$$

The sheaf $\mathbb{F}_d$ is locally free of rank $d$; its fiber at a point 
$(C; x_1 , \dots , x_d)=(C;D)$ is the vector space 
$$H^0(C,K/K(-D)).$$

Using Grothendieck-Riemann-Roch one finds the following formula for the total Chern class of 
$\mathbb{F}_d$:
$$c(\mathbb{F_d})=(1+K_1) (1+K_2-\Delta_2) \dots (1+K_d-\Delta_d).$$

The natural evaluation map of locally free sheaves on $C_g^d$ defines the morphism
$$\phi_d: \mathbb{E} \rightarrow \mathbb{F}_d.$$
The kernel of $\phi_d$ over $(C;D)$ is the vector space $H^0(C,K(-D))$.
Faber shows that the following relation holds in the tautological ring of $M_g$:

\begin{prop}
$c_g(\mathbb{F}_{2g-1}-\mathbb{E})=0.$
\end{prop}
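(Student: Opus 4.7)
The plan is to show that the evaluation morphism $\phi_{2g-1}\colon \mathbb{E} \to \mathbb{F}_{2g-1}$ on $C_g^{2g-1}$ is a fiberwise injection, hence realizes $\mathbb{E}$ as a \emph{subbundle} of $\mathbb{F}_{2g-1}$ with locally free quotient of rank $g-1$. The vanishing of $c_g$ then follows from the purely formal fact that Chern classes of a vector bundle above its rank are zero.

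First I would identify the kernel of $\phi_d$ fiberwise. At a point $(C;x_1,\dots,x_d)$ with $D=x_1+\cdots+x_d$, the short exact sequence of $\OO_C$-modules
$$0 \too \omega_C(-D) \too \omega_C \too \omega_C|_D \too 0$$
realizes $\phi_d$ fiberwise as the natural evaluation map $H^0(C,\omega_C)\to H^0(C,\omega_C|_D)$, and the long exact cohomology sequence identifies its kernel with $H^0(C,\omega_C(-D))$.

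Specializing to $d=2g-1$, we compute $\deg \omega_C(-D) = (2g-2)-(2g-1) = -1$ on every smooth curve $C$. Hence $H^0(C,\omega_C(-D)) = 0$ at every point of $C_g^{2g-1}$, so $\phi_{2g-1}$ is fiberwise injective. By the standard criterion (a morphism of vector bundles that is injective on every fiber is a subbundle inclusion with locally free quotient), we obtain a short exact sequence of locally free sheaves on $C_g^{2g-1}$,
$$0 \too \mathbb{E} \too \mathbb{F}_{2g-1} \too Q \too 0,$$
with $Q$ of rank $(2g-1)-g = g-1$.

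The multiplicativity of total Chern classes then gives
$$c(\mathbb{F}_{2g-1}-\mathbb{E}) \;=\; c(\mathbb{F}_{2g-1})/c(\mathbb{E}) \;=\; c(Q),$$
and since $Q$ is a genuine vector bundle of rank $g-1$, all of its Chern classes in degrees $\ge g$ vanish. In particular $c_g(\mathbb{F}_{2g-1}-\mathbb{E}) = c_g(Q) = 0$, as required. The entire argument hinges on the single degree computation $\deg(\omega_C(-D))=-1$, which is the only non-formal ingredient and is precisely what singles out the choice $d=2g-1$; by Porteous' formula one may alternatively interpret $c_g(\mathbb{F}_{2g-1}-\mathbb{E})$ as the class of the degeneracy locus of $\phi_{2g-1}$, and the above computation says this locus is empty.
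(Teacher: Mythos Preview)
Your proof is correct and follows the same approach as the paper: the key observation is that the kernel of $\phi_d$ at $(C;D)$ is $H^0(C,\omega_C(-D))$, which vanishes for $d=2g-1$ since $\deg(\omega_C(-D))=-1$; the paper phrases this as ``the linear system defined by the divisor class $K-D$ is the empty set.'' Your write-up supplies the details (subbundle criterion, quotient of rank $g-1$, vanishing of $c_g$) that the paper leaves implicit.
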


This essentially follows since on a curve of genus $g$ the linear system defined by the divisor class $K-D$ is the empty set when $K$ is the canonical divisor on the curve and $D$ is a divisor of degree $2g-1$.
In fact, there are relations of this type for every $d \geq 2g-1$:

\begin{prop}
For all $d \geq 2g-1$, for all $j \geq d-g+1, \qquad c_j(\mathbb{F}_d-\mathbb{E})=0$.
\end{prop}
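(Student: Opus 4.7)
The plan is to adapt the argument of the preceding proposition (the case $d=2g-1$, $j=g$) by showing that for $d\geq 2g-1$ the evaluation map $\phi_d:\mathbb{E}\to\mathbb{F}_d$ is injective on every fiber, so that its cokernel is a locally free sheaf of rank $d-g$, whose Chern classes automatically vanish in degrees exceeding $d-g$.

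First, I would identify the fiber of $\phi_d$ at a point $(C;x_1,\dots,x_d)$. By construction of $\mathbb{F}_d$ as $\pi_*(\mathcal{O}_{\Delta_{d+1}}\otimes\omega_{d+1})$, this fiber is the restriction map
\[
H^0(C,K_C)\longrightarrow H^0(C,K_C\otimes \mathcal{O}_D),\qquad D=x_1+\dots+x_d,
\]
whose kernel is $H^0(C,K_C(-D))$. For $d\geq 2g-1$ one has $\deg(K_C-D)=2g-2-d<0$, hence $H^0(C,K_C(-D))=0$. Thus $\phi_d$ is injective on every fiber over $C_g^d$, extending the single observation used in the preceding proposition for $d=2g-1$.

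Next, I would invoke the standard fact that a morphism of locally free sheaves which is injective on fibers has locally free cokernel of the expected rank. Applying this to $\phi_d$ yields a short exact sequence of locally free sheaves on $C_g^d$,
\[
0\longrightarrow \mathbb{E}\longrightarrow \mathbb{F}_d\longrightarrow \mathcal{Q}_d\longrightarrow 0,
\]
with $\mathcal{Q}_d$ locally free of rank $d-g$. Multiplicativity of the total Chern class in short exact sequences gives $c(\mathbb{F}_d-\mathbb{E})=c(\mathcal{Q}_d)$, and since $\mathcal{Q}_d$ has rank $d-g$, all its Chern classes in degree $j\geq d-g+1$ vanish. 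Pushing the resulting identity forward to $M_g$ (where the $\kappa$- and $\lambda$-classes live) via the forgetful maps $C_g^d\to M_g$ gives the relations of the proposition.

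The only subtle point, and the one I would be most careful about, is confirming that $\phi_d$ is fiberwise injective over \emph{all} of $C_g^d$, including the loci where some of the marked points collide; this is where the identification of the kernel as $H^0(K_C(-D))$ must be read with $D$ interpreted as the effective Cartier divisor of degree $d$ cut out by $\Delta_{d+1}$ on each fiber, so that the degree bound $2g-2-d<0$ still forces vanishing. Once this is verified, the rest of the argument is formal.
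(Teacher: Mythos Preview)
Your argument is correct and is exactly the approach the paper indicates: the paper only remarks that the case $d=2g-1$ follows from $H^0(C,K-D)=0$ for $D$ of degree $2g-1$ and then states the general proposition without further proof, so your fleshing-out via fiberwise injectivity of $\phi_d$ and the resulting rank-$(d-g)$ quotient bundle is precisely what is intended. One small correction: the proposition is already a vanishing on $C_g^d$, so your final step of pushing forward to $M_g$ is not part of proving the proposition itself but rather the subsequent use of it (multiplying by monomials in $K_i$, $D_{ij}$ and pushing down) that the paper describes afterwards.
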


These give relations between the tautological classes on $C_g^d$ and lead to relations among the generators of the tautological ring of $M_g$.
The method is to multiply these relations with a monomial in the
$K_i$ and $D_{ij}$ and push down to $M_g$.
It is interesting that this method gives the entire ideal of relations in the tautological ring for $g \leq 23$.
Faber makes the following conjecture:
 
\begin{conj}
In the polynomial ring $\mathbb{Q}[\kappa_1, \dots , \kappa_{g-2}]$, 
let $I_g$ be the ideal generated by the relations
of the form $$\pi_*(M \cdot c_j(\mathbb{F}_{2g-1}- \mathbb{E})),$$
with $j \geq g$ and $M$ a monomial in the $K_i$ and $D_{ij}$ and 
$\pi: C_g^{2g-1} \rightarrow M_g$ the forgetful map. 
Then the quotient ring $\mathbb{Q}[\kappa_1 , \dots , \kappa_{g-2} ]/I_g$ is Gorenstein with socle in degree $g-2$; 
hence it is isomorphic to the tautological ring $R^*(M_g)$.
\end{conj}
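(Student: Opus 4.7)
The plan is to split the conjecture into two independent parts: (i) establish a natural surjection from the abstract presentation onto $R^*(M_g)$, and (ii) prove the Gorenstein property with socle in degree $g-2$ for the quotient $\mathbb{Q}[\kappa_1, \dots, \kappa_{g-2}]/I_g$. The isomorphism in the final clause will then follow from a short socle argument.

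For (i), the two Propositions immediately preceding the Conjecture give $c_j(\mathbb{F}_{2g-1} - \mathbb{E}) = 0$ in the tautological ring of $C_g^{2g-1}$ for every $j \geq g$. Since the $K_i$ and $D_{ij}$ lie in the tautological ring of $C_g^{2g-1}$, so does each product $M \cdot c_j(\mathbb{F}_{2g-1} - \mathbb{E})$, and its push-forward under $\pi: C_g^{2g-1} \to M_g$ therefore vanishes in $R^*(M_g)$. Hence every generator of $I_g$ lies in the kernel of the tautological projection $\mathbb{Q}[\kappa_1,\dots,\kappa_{g-2}] \twoheadrightarrow R^*(M_g)$, producing a graded surjection $\varphi: \mathbb{Q}[\kappa_1,\dots,\kappa_{g-2}]/I_g \twoheadrightarrow R^*(M_g)$. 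Moreover, $\varphi(\kappa_{g-2}) \neq 0$ by the identity $\kappa_{g-2}\lambda_{g-1}\lambda_g = |B_{2g}|(g-1)!/(2^g(2g)!)$ quoted earlier in the excerpt.

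For (ii), the technical heart, I would compute each $r_{M,j} := \pi_*(M \cdot c_j(\mathbb{F}_{2g-1} - \mathbb{E}))$ explicitly using Faber's product formula $c(\mathbb{F}_d) = \prod_{i=1}^d (1 + K_i - \Delta_i)$ together with Mumford's Grothendieck-Riemann-Roch expression of $\lambda_i$ as an odd-$\kappa$ polynomial. After integration of the $K_i$ and $D_{ij}$ over the fibres of $\pi$, each $r_{M,j}$ becomes a concrete polynomial in $\kappa_1,\dots,\kappa_{g-2}$. The task then reduces to commutative algebra: compute the Hilbert series of $\mathbb{Q}[\kappa]/I_g$ and verify it is palindromic of top degree $g-2$ with one-dimensional socle spanned by $\kappa_{g-2}$. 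Natural intermediate checks include vanishing of the ideal in degrees $\leq \lfloor g/3 \rfloor$ (so that $\kappa_1,\dots,\kappa_{\lfloor g/3\rfloor}$ are free generators) and agreement of the top-degree proportionalities with Faber's explicit conjectural formulas, already verified numerically for $g < 24$.

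The main obstacle is (ii) in uniform $g$. A term-by-term verification of perfect pairing between $(\mathbb{Q}[\kappa]/I_g)^i$ and $(\mathbb{Q}[\kappa]/I_g)^{g-2-i}$ seems out of reach without a structural identity governing the infinite family $\{r_{M,j}\}$; one likely needs either a generating-function encoding of these relations (in the spirit of Pixton-type formulas) or a Macaulay-inverse-system construction of an explicit socle functional on the quotient. Granting (ii), step (iii) is clean: the kernel of $\varphi$ is an ideal in the Gorenstein ring $\mathbb{Q}[\kappa]/I_g$, so if non-zero it would intersect the one-dimensional socle non-trivially, forcing $\varphi(\kappa_{g-2}) = 0$ and contradicting the non-vanishing established above. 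Hence $\varphi$ is an isomorphism and $R^*(M_g)$ inherits the Gorenstein structure conjectured in Conjecture \ref{mg}.
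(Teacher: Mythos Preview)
The statement you are attempting to prove is a \emph{conjecture} (Faber's conjecture), and the paper does not contain a proof of it; it merely records that the conjecture has been verified computationally for $g\le 23$ and remains open in general. There is therefore no paper proof to compare against.

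Your outline correctly isolates the two ingredients. Step (i) is sound: the Propositions preceding the conjecture do yield the surjection $\varphi$, and the non-vanishing of $\kappa_{g-2}$ is established earlier in the text. Step (iii) is also correct and is exactly the content of the word ``hence'' in the conjecture: in a graded Artinian Gorenstein ring every nonzero ideal meets the one-dimensional socle, so a surjection that does not kill the socle must be an isomorphism. But step (ii) --- the Gorenstein property of $\mathbb{Q}[\kappa_1,\dots,\kappa_{g-2}]/I_g$ for all $g$ --- is precisely the open content of the conjecture, and you explicitly concede that it ``seems out of reach'' without new structural input. What you have written is thus not a proof but a restatement of the problem together with the (standard) observation that the second clause follows from the first. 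No method is currently known that establishes (ii) for general $g$; indeed for $g=24$ the Faber--Zagier relations (equivalently the stable-quotient relations) already fail to give a Gorenstein quotient, so either new relations exist or the conjecture itself fails.
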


\subsection*{Relations from higher jets of differentials}

The degree $g+1$ relation is a special case of a larger class of tautological relations.
They are obtained using a method introduced by Faber in \cite{faber_conjectural}.
These relations hold on $\CC_g^n$, where $\pi: \CC_g \too M_g$ is the universal curve over $M_g$.
These relations are based on 2 bundles on $\CC_g^n$ depending on parameters $m,n$.
The first bundle is 

$$\E_m:=\pi_*(\w_{\pi}^{\otimes m}),$$ 

which is the usual Hodge bundle of rank $g$ when $m=1$ and is of rank $(2m-1)(g-1)$ when $m>1$.
The second bundle $\F_{m,n}$ is of rank $n$ defined as follows:

$$\F_{m,n}:=\pi_{n+1,*}(\OO_{\Delta_{n+1}} \otimes \w_{\pi}^{\otimes m}),$$

where $\pi_{n+1}: \CC_g^{n+1} \too \CC_g^n$ forgets the last point and 

$$\Delta_{n+1}:=\sum_{i=1}^n d_{i,n+1}$$ 
is the sum of diagonals.
For such pairs we define $r:=n-2g+2$ when $m=1$ and $r:=n-2m(g-1)$ when $m>1$.
The relation has the following form:

$$c_{g+r-\delta_{m,1}}(\F_{m,n}-\E_m)=0$$

for all $r>0$.

\subsubsection{Faber-Zagier relations}
There is another class of relations in the tautological ring of $M_g$ which was discovered by Faber and Zagier
in their study of the Gorenstein quotient of $R^*(M_g)$. 
To explain their method we need to introduce a generating function. 
Let $$\bold{p}=\{p_1, p_3,p_4,p_6,p_7,p_9,p_{10}, \dots \}$$
be a variable set indexed by the positive integers not congruent to 2 mod 3. 
The formal power series $\Psi$ is defined by the formula:
$$\Psi(t, \bold{p})=(1+t p_3+t^2 p_6 + t^3 p_9 + \dots ) \cdot \sum_{i=0} ^{\infty} \frac{(6i)!}{(3i)!(2i)!} t^i$$
$$\qquad \qquad + (p_1+t p_4 + t^2 p_7 + \dots ) \cdot \sum_{i=0} ^{\infty} \frac{(6i)!}{(3i)!(2i)!} \frac{6i+1}{6i-1} t^i.$$
Let $\sigma$ be a partition of $|\sigma|$ with parts not congruent to 2 modulo 3. 
For such partitions the rational numbers $C_r(\sigma)$ are defined as follows: 
$$\log(\Psi(t,\bold{p}))= \sum_{\sigma} \sum_{r=0}^{\infty} C_r(\sigma) t^r \bold{p}^{\sigma},$$
where $\bold{p}^{\sigma}$ denotes the monomial 
$p_1^{a_1} p_3^{a_3} p_4^{a_4} \dots$ if $\sigma$ is the partition 
$[1^{a_1} 3^{a_3} 4^{a_4} \dots]$. 
Define $$\gamma:=\sum_{\sigma} \sum_{r=0} ^{\infty} C_r(\sigma) \kappa_r t^r \bold{p}^{\sigma};$$ 
then the relation 

\begin{equation}\label{fz}\tag{1}
[\exp(-\gamma)]_{t^r \bold{p}^{\sigma}}=0
\end{equation}
holds in the Gorenstein quotient when 
$g-1+|\sigma| <3r $ and $g \equiv r+|\sigma|+1$ (mod 2). 
For $g \leq 23$ the diagonal relations found by the method of Faber in 
\cite{faber_conjectural} coincide with the Faber-Zagier relations. 
In particular, these are \emph{true relations} and give a complete description of 
the tautological rings for $g \leq 23$. 
We will see below that these relations always hold in the tautological algebra.
It is not known in higher genera whether the Faber-Zagier relations give all relations.
 
\subsubsection{Tautological relations via stable quotients}
In \cite{marian_oprea_pandharipande_SQ} the moduli of stable quotients is introduced and studied. 
These spaces give rise to a class of tautological relations in $R^*(M_g)$ which are called stable quotient relations. 
These relations are based on the function:
$$\Phi(t,x)=\sum_{d=0}^{\infty} \prod_{i=1}^d \frac{1}{1-it} \frac{(-x)^d }{d! t^d}.$$
Define the coefficient $C_d^r$ by the logarithm, 
$$\log(\Phi)= \sum_{d=1}^{\infty} \sum_{r=-1}^{\infty} C_d^r t^r \frac{x^d}{d!}.$$
Let $$\gamma=\sum_{i \geq 1} \frac{B_{2i}}{2i(2i-1)} \kappa_{2i-1} t^{2i-1} +  \sum_{d=1}^{\infty} \sum_{r=-1}^{\infty} C_d^r \kappa_r t^r \frac{x^d}{d!}.$$
The coefficient of $t^r x^d$ in $\exp(-\gamma)$ is denoted by $[\exp(-\gamma)]_{t^r x^d}$, which is an element
of $\mathbb{Q}[\kappa_{-1} , \kappa_0, \kappa_1, \dots]$. 
Recall that 
$$\kappa_{-1}=0, \qquad \kappa_0=2g-2.$$ 
The first class of stable quotient relations is given by the following result.

\begin{thm}
In $R^r(M_g)$, the relation $$[\exp(-\gamma)]_{t^r x^d}=0$$
holds when $g-2d-1 < r$ and $g \equiv r+1$ mod 2.
\end{thm}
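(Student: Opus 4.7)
The plan is to derive these relations by virtual equivariant localization on a moduli space of stable quotients, following the strategy introduced in \cite{marian_oprea_pandharipande_SQ}. The relevant object is the moduli space $\SQ_g(\PP^N, d)$ parameterizing stable quotients of rank $N$ and degree $d$ on moving stable genus $g$ curves; it carries a canonical virtual fundamental class and the natural $\C^*$-action inherited from a diagonal torus action on $\PP^N$. Let $\nu: \SQ_g(\PP^N,d) \too \M_g$ denote the morphism that remembers only the underlying curve. First I would form an equivariant integral $\int [\SQ_g(\PP^N,d)]^{\mathrm{vir}} \cap \alpha$ for a suitably chosen equivariant tautological class $\alpha$ whose cohomological degree lies strictly above the virtual dimension. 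The integral then vanishes for trivial reasons of dimension, and feeding this vanishing into the virtual localization formula yields a nontrivial identity once one sums over $\C^*$-fixed loci. The numerical hypotheses $g-2d-1 < r$ and $g \equiv r+1 \pmod 2$ are arranged to put the integrand above virtual dimension while respecting the parity constraint forced by the Mumford vanishing $\mathrm{ch}_{2k}(\E)=0$.

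Next I would analyze the fixed loci of the $\C^*$-action: these correspond to quotients whose image is concentrated set-theoretically at a torus fixed point of $\PP^N$, so each connected component is essentially a copy of $\M_g$ indexed by the discrete degree data. The virtual normal bundle at such a locus has two main ingredients. The first is the Hodge bundle $\E$ on $\M_g$, contributing
$$\sum_i \lambda_i t^i = \exp\Bigl( \sum_{i\geq 1} \frac{B_{2i}}{2i(2i-1)} \kappa_{2i-1} t^{2i-1} \Bigr)$$
by the Grothendieck-Riemann-Roch identity recalled earlier in the text; this is exactly the Bernoulli part of $\gamma$. The second ingredient is a hypergeometric-type contribution coming from the equivariant Euler class of the deformation space of the quotient sheaf, built from products $\prod_{j=1}^d (1-jt)^{\pm 1}$ weighted by the degree distribution. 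Summed over fixed loci and reorganized by degree, these assemble precisely into the generating series $\Phi(t,x)$, and passing to the logarithm produces the coefficients $C_d^r$ that appear in the second half of $\gamma$.

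Finally I would push the localization identity forward along $\nu$ to $\M_g$: powers of the universal cotangent class on the stable-quotient side descend to monomials in the $\kappa_r$ on $\M_g$, and the multiplicative combination of the Hodge factor with the $\Phi$-factor reassembles as the coefficient $[\exp(-\gamma)]_{t^r x^d}$. The dimensional vanishing of the equivariant integral then translates directly into the vanishing of this coefficient in $R^r(M_g)$, which is the asserted relation. The main obstacle is the detailed combinatorial bookkeeping in the second step: one has to check that the product of Bernoulli and degree-$d$ contributions, once symmetrized over fixed loci and pushed forward, reproduces the explicit series $\Phi$ on the nose, including the correct normalizations of $\frac{(6i)!}{(3i)!(2i)!}$-type factors replaced here by the simpler $\prod(1-jt)$ structure. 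Once this matching of generating functions is established, the relation itself is an immediate corollary of the localization identity and the dimension count.
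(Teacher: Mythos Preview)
The paper you are working from is a survey; it states this theorem as a result of \cite{marian_oprea_pandharipande_SQ} and does not give its own proof. So there is nothing in the paper to compare against directly --- your outline should be measured against the argument in that reference, and on that score your overall architecture (virtual localization on the stable quotient space, dimension vanishing, push-forward to $\M_g$) is correct.

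That said, there is one genuine inaccuracy in your fixed-locus analysis. The $\C^*$-fixed loci are \emph{not} ``essentially copies of $\M_g$ indexed by discrete degree data.'' A torus-fixed stable quotient on a curve $C$ is determined by $C$ together with the length-$d$ subscheme where the torsion of the quotient is supported; the fixed loci are (up to automorphisms) the spaces $\M_{g,d}/\Sigma_d$, i.e.\ the $d$-th symmetric product of the universal curve over $\M_g$. This is not a cosmetic point: it is precisely the push-forward along $\CC_g^d \to \M_g$ of the Chern classes of the universal subsheaf that produces the $\kappa_r$'s and assembles into the generating function $\Phi(t,x)=\sum_d \prod_{i=1}^d (1-it)^{-1}\,(-x)^d/(d!\,t^d)$. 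Without identifying the fixed loci correctly you cannot see where the $\kappa$-polynomials come from, nor why the exponential of $-\gamma$ appears (it arises from the multiplicative structure over disconnected degree splittings, organized via $\log \Phi$).

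A smaller point: your aside about ``$\frac{(6i)!}{(3i)!(2i)!}$-type factors'' is misplaced. Those coefficients belong to the Faber--Zagier series $\Psi$, not to $\Phi$; the stable-quotient generating function genuinely has only the $\prod (1-it)^{-1}$ structure, and the equivalence with Faber--Zagier is a separate (and nontrivial) theorem of \cite{pandharipande_pixton_relations}. You should not expect those hypergeometric factors to appear at any stage of the localization computation here.
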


In \cite{pandharipande_pixton_relations} the connection between the Faber-Zagier relations and stable quotient relations is studied. 
It is proven that stable quotient relations are equivalent to Faber-Zagier relations.
These give a complete description of the ideal of relations for $g \leq 23$, where the tautological algebra 
of $M_g$ is known to be Gorenstein. The case $g=24$ is unknown. 
In this case the Faber-Zagier relations do not give a Gorenstein algebra. 
According to the result in \cite{pandharipande_pixton_relations} the tautological ring of $M_{24}$ is not Gorenstein or there are 
tautological relations not of the form \eqref{fz}.

\subsection{Filtration of the moduli space} 
The complete analysis of the tautological ring of the moduli space $\overline{M}_{g,n}$ is a difficult question. 
In \cite{faber_hodge} it is proposed to find natural ways to forget some of the boundary strata. 
There is a moduli filtration $$\overline{M}_{g,n} \supset M_{g,n}^{ct} \supset M_{g,n}^{rt} \supset X_{g,n}^{rt},$$
where $X_{g,n}^{rt}$ denotes the reduced fiber of the projection $\pi:\overline{M}_{g,n} \rightarrow \overline{M}_g$
over the moduli point of a smooth curve $X$ of genus $g$.
The associated restriction sequence is 
$$A^*(\overline{M}_{g,n}) \rightarrow A^*(M_{g,n}^{ct}) \rightarrow A^*(M_{g,n}^{rt}) \rightarrow A^*(X_{g,n}^{rt}) \rightarrow 0$$

$$R^*(\overline{M}_{g,n}) \rightarrow R^*(M_{g,n}^{ct}) \rightarrow R^*(M_{g,n}^{rt}) \rightarrow R^*(X_{g,n}^{rt}) \rightarrow 0$$

\begin{rem}
For the open moduli space $M_g \subset \overline{M}_g$ there is a restriction sequence 
$$R^*(\partial \overline{M}_g) \rightarrow R^*(\overline{M}_g) \rightarrow R^*(M_g) \rightarrow 0.$$
It is not known whether the sequence is exact in the middle. 
In \cite{faber_pandharipande_relative} the restriction sequence is conjectured to be exact in all degrees. 
This is proven to be true for $g<24$ in \cite{pandharipande_pixton_relations}. The exactness of the tautological sequences
$$R^*(\overline{M}_{g,n} \backslash M_{g,n}^{ct}) \rightarrow R^*(\overline{M}_{g,n}) \rightarrow R^*(M_{g,n}^{ct}) \rightarrow 0,$$
$$R^*(\overline{M}_{g,n} \backslash M_{g,n}^{rt}) \rightarrow R^*(\overline{M}_{g,n}) \rightarrow R^*(M_{g,n}^{rt}) \rightarrow 0,$$
associated to the compact type and rational tail spaces is conjectured in \cite{faber_pandharipande_relative}.
\end{rem}

\subsection{Evaluations}
Each quotient ring admits a nontrivial linear evaluation $\epsilon$ to $\mathbb{Q}$ obtained by integration. 
The class $\lambda_g$ vanishes when restricted to $\Delta_{irr}$. 
This gives rise to an evaluation $\epsilon$ on $A^*(M_{g,n}^{ct}):$ 
$$\xi \mapsto \epsilon( \xi)=\int_{\overline{M}_{g,n}} \xi \cdot \lambda_g.$$ 

The non-triviality of the $\epsilon$ evaluation is proven by explicit integral computations. 
The following formula for $\lambda_g$ integrals is proven in \cite{faber_pandharipande_hodge}:

$$\int_{\overline{M}_{g,n}} \psi_1^{\alpha_1} \dots \psi_n^{\alpha_n} \lambda_g= \binom{2g-3+n}{\alpha_1, \dots ,\alpha_n} \int_{\overline{M}_{g,1}} \psi_1^{2g-2} \lambda_g.$$

The integrals on the right side are evaluated in terms of the Bernoulli numbers: 
$$\int_{\overline{M}_{g,1}} \psi_1^{2g-2} \lambda_g=\frac{2^{2g-1}-1}{2^{2g-1}}\frac{|B_{2g}|}{(2g)!}.$$
This proves the non-triviality of the evaluation since $B_{2g}$ doesn't vanish. 
It is proven in \cite{faber_algorithms} that for $g>0$ the class $\lambda_{g-1} \lambda_g$ 
vanishes when restricted to the complement of the open subset $M_{g,n}^{rt}$. 
This leads to an evaluation $\epsilon$ on $A^*(M_{g,n}^{rt})$:
$$\xi \mapsto \epsilon(\xi)=\int_{\overline{M}_{g,n}} \xi \cdot \lambda_{g-1} \lambda_{g}.$$

$$\int_{\overline{M}_{g,n}} \psi_1^{\alpha_1} \dots \psi_n^{\alpha_n} \lambda_{g-1} \lambda_{g}= \frac{(2g+n-3)! (2g-1)!!}{(2g-1)! \prod_{i=1}^n (2 \alpha_i -1)!!} \int_{\overline{M}_{g,1}} \psi_1^{g-1} \lambda_{g-1} \lambda_{g},$$
where $g \geq 2$ and $\alpha_i \geq 1$. 
In \cite{getzler_pandharipande_virasoro} it is shown that the degree zero Virasoro conjecture applied to $\mathbb{P}^2$ implies this prediction. 
The constant 
$$\int_{\overline{M}_{g,1}} \psi_1^{g-1} \lambda_{g-1} \lambda_{g}=\frac{1}{2^{2g-1}(2g-1)!!}\frac{|B_{2g}|}{2g}$$
has been calculated by Faber, who shows that it follows from Witten's conjecture.

\subsection{Known facts}
The following results are known:
\begin{itemize}
\item[(a)] $R^*(M_{g,n}^{rt})$ vanishes in degrees $> g-2+n-\delta_{0g}$ and is 1-dimensional in degree 
$g-2+n-\delta_{0g}$.
\item[(b)] $R^*(M_{g,n}^{ct})$ vanishes in degrees $> 2g-3+n$ and is 1-dimensional in degree $2g-3+n$.
\item[(c)] $R^*(\overline{M}_{g,n})$ vanishes in degrees $> 3g-3+n$ and is 1-dimensional in degree $3g-3+n$.
\end{itemize}
The statement (a) is due to Looijenga \cite{looijenga_tautological}, Faber \cite{faber_conjectural}, Faber and Pandharipande \cite{faber_pandharipande_logarithmic}.
Graber and Vakil in \cite{graber_vakil_tautological,graber_vakil_localization} proved (b),(c).
In their study of relative maps and tautological classes Faber and Pandharipande \cite{faber_pandharipande_relative}
give another proof of (b),(c).

\subsection{Gorenstein conjectures and their failure}

We have discussed conjectures of Faber on the tautological ring of $M_g$.
Analogue conjectures were formulated by Faber and Pandharipande for pointed spaces and their compactification:

\begin{enumerate}
\item[(A)] $R^*(M_{g,n}^{rt})$ is Gorenstein with socle in degree  $g-2+n-\delta_{0g}$.

\item[(B)] $R^*(M_{g,n}^{ct})$ is Gorenstein with socle in degree $2g-3+n$.

\item[(C)] $R^*(\overline{M}_{g,n})$ is Gorenstein with socle in degree $3g-3+n$.
\end{enumerate}

Hain and Looijenga introduce a compactly supported version of the tautological algebra: 
The algebra $R^*_c(M_{g,n})$ is defined to be the set of elements in 
$R^*(\overline{M}_{g,n})$ that restrict trivially to the Deligne-Mumford boundary. 
This is a graded ideal in $R^*(\overline{M}_{g,n})$ and the intersection product defines a map 
$$R^*(M_{g,n}) \times R^{*}_c(M_{g,n}) \rightarrow R^{*}_c(M_{g,n})$$ 
that makes $R^*_c(M_{g,n})$ a $R^*(M_{g,n})$-module. 
In \cite{hain_looijenga_mapping} they formulated the following conjecture for the case $n=0$:

\begin{conj}\label{hl}
\begin{enumerate}
\item[(1)] The intersection pairings 
$$R^k(M_g) \times R^{3g-3-k}_c(M_g) \rightarrow R_c^{3g-3}(M_g) \cong \mathbb{Q}$$ 
are perfect for $k \geq 0$.

\item[(2)] In addition to (1), $R^*_c(M_g)$ is a free $R^*(M_g)$-module of rank one. 
\end{enumerate}
\end{conj}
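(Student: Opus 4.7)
The plan is to derive (1) from (2) and attack (2) via a linkage argument in the graded Gorenstein algebra $R^*(\overline{M}_g)$; the argument rests on three inputs: Faber's Gorenstein conjecture for $R^*(M_g)$ (Conjecture~\ref{mg}, known for $g<24$), the analogous Gorenstein conjecture for $R^*(\overline{M}_g)$ (the $n=0$ case of the compactified Faber--Pandharipande conjecture stated just above), and middle-exactness of the boundary restriction sequence for $R^*(M_g)$ (open, as noted in the preceding remark).

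That (2) implies (1) is formal: if $R^*_c(M_g)$ is free of rank one over $R^*(M_g)$ with generator $\Theta$ in degree $2g-1$, which is the shift sending the socle degree $g-2$ of $R^*(M_g)$ to the socle degree $3g-3$ of $R^*_c(M_g)$, then under the isomorphism $\alpha\mapsto\alpha\cdot\Theta$ in the second slot and the fixed identification $R^{g-2}(M_g)\cdot\Theta=R^{3g-3}_c(M_g)\cong\mathbb{Q}$ on the target, the pairing of (1) becomes the Gorenstein pairing $(\alpha,\alpha')\mapsto\alpha\alpha'$ of $R^*(M_g)$, which is perfect by assumption.

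For (2), write $A=R^*(\overline{M}_g)$ and $I=\ker(A\to R^*(M_g))$. Middle-exactness says $I$ is generated by pushforwards from $\partial\overline{M}_g$, so the projection formula gives $I\cdot R^*_c(M_g)=0$ and hence $R^*_c(M_g)\subset\mathrm{Ann}_A(I)$. Graded Matlis duality in the Gorenstein ring $A$, combined with the Gorenstein-ness of $A/I=R^*(M_g)$, forces
$$\mathrm{Ann}_A(I)\;\cong\;(A/I)\bigl(-(2g-1)\bigr)$$
as $A/I$-modules: a free module of rank one with generator in degree $2g-1$. Inside $R^*_c(M_g)$ the natural candidate for this generator is $\Theta:=\lambda_{g-1}\lambda_g$: its degree is $2g-1$; it lies in $R^{2g-1}_c(M_g)$ by Faber's result that $\lambda_{g-1}\lambda_g$ vanishes off $M_{g,0}^{rt}=M_g$, whose complement in $\overline{M}_g$ is precisely $\partial\overline{M}_g$; and it generates $\mathrm{Ann}_A(I)$ because its image in the degree-zero summand of $(A/I)(-(2g-1))$ is detected by the nonvanishing socle evaluation
$$\int_{\overline{M}_g}\kappa_{g-2}\cdot\lambda_{g-1}\lambda_g=\frac{(g-1)!\,|B_{2g}|}{2^g(2g)!}\neq 0.$$
Since $R^*_c(M_g)$ is an ideal of $A$ containing $\Theta$, it contains $A\cdot\Theta=(A/I)\cdot\Theta=\mathrm{Ann}_A(I)$; combined with the reverse inclusion this yields $R^*_c(M_g)=\mathrm{Ann}_A(I)\cong R^*(M_g)(-(2g-1))$, which is (2).

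The two open inputs are the main obstacles. Middle-exactness looks especially delicate: without it $I$ may be strictly larger than the image from $\partial\overline{M}_g$, the projection-formula vanishing $I\cdot R^*_c(M_g)=0$ need not hold, and the linkage argument collapses at exactly that step. A fallback that avoids the exactness question would fix $\Theta=\lambda_{g-1}\lambda_g$ and study the multiplication map $m_\Theta: R^*(M_g)\to R^{*+2g-1}_c(M_g)$ directly: injectivity is immediate from Faber's Gorenstein together with the socle evaluation, but surjectivity would demand a Poincar\'e-dual analogue of Looijenga's vanishing asserting $R^k_c(M_g)=0$ for $k<2g-1$, which does not appear accessible by the methods of \cite{looijenga_tautological} and would itself constitute a substantial advance.
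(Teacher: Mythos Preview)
The statement you are attempting is labelled and presented in the paper as a \emph{conjecture} of Hain and Looijenga; the paper offers no proof, so there is no ``paper's own proof'' to compare your attempt against. What the paper does contain is Theorem~\ref{rt} (due to Faber), which records conditional implications among the Gorenstein-type conjectures $A,C,D,E$; specialised to $n=0$ that theorem says exactly that the Gorenstein conjectures for $R^*(M_g)$ and $R^*(\overline M_g)$ together imply part (2), and that (2) in turn implies (1) together with the Gorenstein property of $R^*(M_g)$. Your proposal is a reconstruction of (the $n=0$ case of) that implication, not a proof of the conjecture itself, and you correctly flag that the inputs are open.

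Two remarks on the comparison. First, Theorem~\ref{rt} as stated does \emph{not} list middle-exactness of the restriction sequence as a hypothesis; your argument introduces it in order to obtain $I\cdot R^*_c(M_g)=0$ via the projection formula, so either Faber's route avoids this step or absorbs it differently. This is worth examining: if you can show $R^*_c(M_g)\subset\mathrm{Ann}_A(I)$ without exactness---for instance by using the assumed Gorenstein duality on $A$ to test $\alpha\cdot\beta$ against all top-degree classes and then invoking that every such class is detected on a stratum where one of $\alpha,\beta$ already vanishes---you would match the hypotheses of Theorem~\ref{rt} exactly. Second, your direction ``(2)$\Rightarrow$(1)'' still consumes the Gorenstein property of $R^*(M_g)$ as an input; in the paper's scheme this is not circular because $E\Rightarrow A$ is part of Theorem~\ref{rt}, so freeness already forces the Gorenstein property of $R^*(M_g)$. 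Noting that would let you drop one of your three stated inputs.

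Mathematically your linkage argument is sound as a conditional derivation: the Matlis-duality identification $\mathrm{Ann}_A(I)\cong (A/I)(-(2g-1))$ is correct under the two Gorenstein hypotheses, and your choice of generator $\Theta=\lambda_{g-1}\lambda_g$ with the explicit socle evaluation is exactly the mechanism the paper uses elsewhere to exhibit nontriviality. But you should present the whole thing as a conditional reduction (in the spirit of Theorem~\ref{rt}) rather than as a proof of Conjecture~\ref{hl}, since none of the required inputs is established in the paper and the conjecture remains open.
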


There is a generalization of the notion of the compactly supported tautological algebra to the space 
$M_{g,n}^{rt}$: In \cite{faber_remark} Faber defines $R^*_c(M_{g,n}^{rt})$ as the set of elements in 
$R^*(\overline{M}_{g,n})$
that restrict trivially to $\overline{M}_{g,n} \backslash M_{g,n}^{rt}$. 
He considers the following generalization of the conjectures above:

\begin{conj} \label{HL}
\begin{enumerate}
\item[(D)] The intersection pairings 
$$R^k(M_{g,n}^{rt}) \times R^{3g-3+n-k}_c(M_{g,n}^{rt}) \rightarrow R_c^{3g-3+n}(M_{g,n}^{rt}) \cong \mathbb{Q}$$ are perfect for $k \geq 0.$

\item[(E)] In addition to D, $R^*_c(M_{g,n}^{rt})$ is a free $R^*(M_{g,n}^{rt})$-module of rank one. 
\end{enumerate}
\end{conj}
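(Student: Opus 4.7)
The strategy I would follow is to reduce both parts of the conjecture to the Gorenstein property (A) for $R^*(M_{g,n}^{rt})$ via the choice of an explicit generator of the module $R^*_c(M_{g,n}^{rt})$. First I would try to exhibit a specific class $\xi \in R^d_c(M_{g,n}^{rt})$ of degree $d = 2g-1+\delta_{0g}$ with nontrivial integral pairing against the socle generator of $R^{g-2+n-\delta_{0g}}(M_{g,n}^{rt})$. The natural candidate is (a nonzero multiple of) $\lambda_{g-1}\lambda_g$, for two reasons already emphasised in the excerpt: this class restricts trivially to the complement of $M_{g,n}^{rt}$, so it does lie in $R^*_c(M_{g,n}^{rt})$; and the evaluation $\epsilon(\xi')=\int_{\overline{M}_{g,n}}\xi'\cdot\lambda_{g-1}\lambda_g$ is nontrivial by the Faber--Pandharipande formula recalled just above the statement, whose leading value is expressed in Bernoulli numbers.

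Given such a $\xi$, multiplication by $\xi$ furnishes an $R^*(M_{g,n}^{rt})$-module map
$$\mu_\xi\colon R^*(M_{g,n}^{rt}) \longrightarrow R^{*+d}_c(M_{g,n}^{rt}),$$
and part (E) of Conjecture \ref{HL} is precisely the assertion that $\mu_\xi$ is an isomorphism. Once (E) is granted, (D) follows formally: the pairing $R^k \times R^{3g-3+n-k}_c \to \mathbb{Q}$ factors as $(\alpha,\beta)\mapsto\langle \alpha, \mu_\xi^{-1}\beta\rangle_{R^*(M_{g,n}^{rt})}$ up to a nonzero constant, and perfectness of the right-hand pairing is exactly (A). So the heart of the plan is a proof of (E), which I would break into injectivity and surjectivity of $\mu_\xi$.

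For injectivity, I would combine (A) with the nontriviality of $\epsilon$: for each nonzero $\alpha \in R^k(M_{g,n}^{rt})$, the Gorenstein property supplies some $\beta \in R^{g-2+n-\delta_{0g}-k}(M_{g,n}^{rt})$ with $\alpha\beta$ a nonzero multiple of the socle generator, and the evaluation formula then gives $\epsilon(\alpha\beta\cdot\xi)\neq 0$, so in particular $\alpha\cdot\xi\neq 0$. Surjectivity is more delicate: my plan is to take the additive generators for $R^*(\overline{M}_{g,n})$ produced by Theorem \ref{maude}, intersect with the condition of restricting trivially to $\overline{M}_{g,n}\setminus M_{g,n}^{rt}$, and show that each resulting generator of $R^*_c(M_{g,n}^{rt})$ can be rewritten as $\alpha\cdot\xi$ with $\alpha \in R^*(M_{g,n}^{rt})$, using the vanishing of $\lambda_{g-1}\lambda_g$ on all boundary strata outside $M_{g,n}^{rt}$.

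The main obstacle is the surjectivity in (E), and more fundamentally the status of (A) itself: as noted in the excerpt, the recent analysis in \cite{pandharipande_pixton_relations} shows that the tautological ring of $M_{24}$ is either non-Gorenstein or possesses relations outside the Faber--Zagier family, so (A) is not known in general and may genuinely fail. Since (E) is strictly stronger than (A), any complete proof would require a systematic understanding of the tautological relations — e.g.\ via the stable quotient / Faber--Zagier framework — refined enough to control $R^*(M_{g,n}^{rt})$ and $R^*_c(M_{g,n}^{rt})$ in parallel. For $g\leq 23$, where (A) is known, the approach above should succeed, but the general case of (E) appears to me to require a genuinely new input.
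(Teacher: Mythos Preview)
The statement you are addressing is a \emph{conjecture}; the paper does not prove it, and indeed later notes counterexamples to the closely related conjectures (B) and (C). What the paper does contain is Theorem~\ref{rt} (due to Faber), which records the implications
\[
A_{(g,n)} \ \text{and}\ C_{(g,n)} \Longrightarrow E_{(g,n)} \Longrightarrow A_{(g,n)} \ \text{and}\ D_{(g,n)}.
\]
Your reduction of (D) to (A) once (E) is known, and your injectivity argument for $\mu_\xi$ assuming (A), essentially reproduce the right-hand implication $E \Rightarrow A \wedge D$, and your choice of generator $\xi=\lambda_{g-1}\lambda_g$ is exactly the one underlying Faber's argument.

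The genuine gap is in your surjectivity step, and it is precisely why the paper's implication toward (E) requires the extra hypothesis (C). You propose to take additive generators of $R^*(\overline{M}_{g,n})$, restrict to those vanishing on $\overline{M}_{g,n}\setminus M_{g,n}^{rt}$, and rewrite each as $\alpha\cdot\xi$. But the vanishing of $\xi$ on the boundary only shows that the principal ideal $(\xi)$ is \emph{contained} in $R^*_c(M_{g,n}^{rt})$; it gives no mechanism for the reverse inclusion. There is no reason, from (A) alone, that the kernel of restriction should be principal. Faber's proof of $A\wedge C\Rightarrow E$ uses the Gorenstein duality on the \emph{compact} space $\overline{M}_{g,n}$ to force this: given $\gamma\in R^*_c$, one needs a class on $\overline{M}_{g,n}$ pairing nontrivially with $\gamma$, and (C) is what supplies it. Your sketch provides no substitute for this input. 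There is also a prior well-definedness issue you pass over: to define $\mu_\xi$ on $R^*(M_{g,n}^{rt})$ rather than on $R^*(\overline{M}_{g,n})$, you need that any lift of $\alpha$ gives the same product with $\xi$, which amounts to the (conjectural) exactness of the restriction sequence for $M_{g,n}^{rt}$ mentioned in the Remark just before these conjectures.

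Your concluding assessment is accurate: since (C) is now known to fail (Petersen--Tommasi), and since (E) implies (A) which is open for $g\ge 24$, Conjecture~\ref{HL} is genuinely open and your strategy cannot be completed without new ideas.
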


The relation between the Gorenstein conjectures and the conjectures of Hain and Looijenga is discussed in \cite{faber_remark}. To state the result let us define a partial ordering on the set of pairs $(g,n)$ of nonnegatvie integers such that $2g-2+n>0$. We say that $(h,m) \leq (g,n)$ if and only if $h \leq g$ and $2h-2+m \leq 2g-2+n$.
This is equivalent to saying that there exists a stable curve of genus $g$ whose dual graph contains a vertex of genus $h$ with valency $m$. Faber proves the following fact:

\begin{thm}\label{rt}
Conjectures (A) and (C) are true for all $(g,n)$ if and only if conjecture (E) is true for all $(g,n)$. More precisely, 
$$A_{(g,n)} \  \text{and} \ C_{(g,n)} \Rightarrow E_{(g,n)} \Rightarrow A_{(g,n)} \ \text{and} \ D_{(g,n)}$$
and $$\{ D_{(g^{'},n^{'})} \}_{(g^{'} , n^{'}) \leq (g,n)} \Rightarrow \{C_{(g^{'}, n^{'})}\}_{(g^{'}, n^{'}) \leq (g,n)}.$$
\end{thm}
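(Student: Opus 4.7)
The plan is to split the proof into the three separate implications: (i) $A_{(g,n)}\wedge C_{(g,n)}\Rightarrow E_{(g,n)}$, (ii) $E_{(g,n)}\Rightarrow A_{(g,n)}\wedge D_{(g,n)}$, and (iii) $\{D_{(g',n')}\}_{(g',n')\leq(g,n)}\Rightarrow\{C_{(g',n')}\}_{(g',n')\leq(g,n)}$. The shared setup is the restriction exact sequence $0\to I^{rt}\to R^*(\overline{M}_{g,n})\to R^*(M_{g,n}^{rt})\to 0$, where $I^{rt}$ is the ideal of classes supported on the non-rational-tails boundary, together with the distinguished class $\alpha=\lambda_{g-1}\lambda_g$ (or $\alpha=1$ when $g=0$). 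This $\alpha$ lies in $R^{2g-1+\delta_{0g}}_c(M_{g,n}^{rt})$ because $\lambda_g|_{\delta_{\mathrm{irr}}}=0$ (Hodge bundle rank drop) and $\lambda_{g-1}\lambda_g|_{\delta_{h,S}}=0$ for $h,g-h\geq 1$ (splitting of the Hodge bundle plus Mumford's relation $\lambda_h^2=0$). The projection formula then gives $\alpha\cdot I^{rt}=0$, so multiplication by $\alpha$ descends to a well-defined graded homomorphism $\bar\mu\colon R^*(M_{g,n}^{rt})\to R^{*+2g-1+\delta_{0g}}_c(M_{g,n}^{rt})$.

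For part (i), write $d=3g-3+n$, $s=g-2+n-\delta_{0g}$, $m_k=\dim R^k(M_{g,n}^{rt})$, and $c_k=\dim R^k_c(M_{g,n}^{rt})$. First, $R^k_c(M_{g,n}^{rt})\subseteq(I^{rt}_{d-k})^\perp$ under the integration pairing, by the projection formula; under $(C)$ combined with the exact sequence one computes $\dim(I^{rt}_{d-k})^\perp=m_{d-k}$, giving the upper bound $c_k\leq m_{d-k}$. Second, $(A)$ and the non-triviality of $\epsilon_{M^{rt}}(\xi)=\int_{\overline{M}_{g,n}}\xi\cdot\alpha$ force $\bar\mu$ to be injective: if $\alpha\tilde\xi=0$ then $\epsilon_{M^{rt}}(\xi\eta)=0$ for every $\eta$, and Gorenstein of $R^*(M^{rt})$ forces $\xi=0$; this yields $c_{k+2g-1+\delta_{0g}}\geq m_k$. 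Third, Gorenstein symmetry from $(A)$ gives $m_{d-k}=m_{k-(2g-1+\delta_{0g})}$, matching the two bounds and making $\bar\mu$ an isomorphism, which is freeness. $(D)$ then follows by descending the Gorenstein pairing of $R^*(\overline{M}_{g,n})$ to a perfect pairing $R^*(\overline{M}_{g,n})/I^{rt}\times(I^{rt})^\perp\to\QQ$, i.e., $R^*(M^{rt})\times R^*_c(M^{rt})\to\QQ$.

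Part (ii) is essentially formal: $(D)$ is part of $(E)$ by definition, and composing the freeness isomorphism $R^{s-k}(M^{rt})\xrightarrow{\bar\mu}R^{d-k}_c(M^{rt})$ with the perfect pairing from $(D)$ yields a perfect pairing $R^k(M^{rt})\times R^{s-k}(M^{rt})\to\QQ$ that computes as $(\xi,\eta)\mapsto\epsilon_{M^{rt}}(\xi\eta)$; perfection of this pairing is exactly Gorenstein of $R^*(M^{rt})$, which is $(A)$.

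For part (iii), I would induct on the partial order on pairs, with base cases at minimal pairs such as $(0,3)$ and $(1,1)$ where $(C)$ is directly known. At the inductive step, assuming $(C)$ for all pairs strictly smaller than $(g,n)$, every boundary stratum $\xi_A\colon\overline{M}_A=\prod_v\overline{M}_{g(v),n(v)}\to\overline{M}_{g,n}$ of a nontrivial stable graph $A$ has each vertex satisfying $(g(v),n(v))<(g,n)$ strictly, because $\sum_v(2g(v)-2+n(v))=2g-2+n$ by subadditivity and either $|V(A)|\geq 2$ (strict splitting of positive summands) or $|V(A)|=1$ with $e\geq 1$ loops (giving $g(v)=g-e<g$). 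By the induction hypothesis each factor is Gorenstein, so $R^*(\overline{M}_A)=\bigotimes_v R^*(\overline{M}_{g(v),n(v)})$ is Gorenstein. To check perfectness of the integration pairing on $R^*(\overline{M}_{g,n})$, for any nonzero $y\in R^k(\overline{M}_{g,n})$ I distinguish two cases: if $y\notin R^*_c(M^{rt})$ then some $\xi_A^* y\neq 0$, and the Gorenstein pairing on $\overline{M}_A$ produces $z'$ with $\int_{\overline{M}_A}\xi_A^* y\cdot z'\neq 0$, so $\int_{\overline{M}_{g,n}}y\cdot\xi_{A*}z'\neq 0$ by projection; if $y\in R^*_c(M^{rt})$ then $(D)_{(g,n)}$ supplies $\bar x\in R^{d-k}(M^{rt})$ with $\langle\bar x,y\rangle\neq 0$, and any lift $\tilde x$ gives $\int\tilde x\cdot y$ independent of the lift since $y$ annihilates $I^{rt}$. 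This proves $(C)_{(g,n)}$ and closes the induction. The main obstacle will be this inductive step, chiefly justifying the Künneth-type identification of the tautological ring of a boundary stratum with the tensor product of factor tautological rings (handling the finite automorphism quotients) and the combinatorial verification of strict decrease in the partial order for every stable graph with at least one edge.
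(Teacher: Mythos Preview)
The paper is a survey and does not supply a proof of this theorem; it merely attributes the result to Faber's note \cite{faber_remark}. So there is no in-paper argument to compare against. That said, your outline is sound and is essentially the argument one finds in Faber's original.

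Two points worth tightening. First, in part~(i) you invoke the ``restriction exact sequence'' with $I^{rt}$ described as classes supported on the non-rt boundary. Exactness of the \emph{tautological} restriction sequence is itself conjectural (see the Remark in the paper just after the filtration discussion). The clean fix is to take $I^{rt}:=\ker\bigl(R^*(\overline{M}_{g,n})\to R^*(M_{g,n}^{rt})\bigr)$; then exactness is tautological, and your key step $\alpha\cdot I^{rt}=0$ still holds because any $y\in I^{rt}$ is, at the level of Chow groups, a pushforward from the closed boundary (ordinary excision), and $\iota^*\alpha=0$ there, so $\alpha\cdot y=\iota_*(\iota^*\alpha\cdot w)=0$ by the projection formula in $A^*$. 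With this reading your dimension count $\dim(I^{rt}_{d-k})^\perp=m_{d-k}$ goes through using only $(C)$. Alternatively, one can reorder: prove $(D)$ directly first (left nondegeneracy from $(A)$ via $y=\alpha\tilde\eta$, right nondegeneracy from $(C)$ by lifting), deduce $m_k=c_{d-k}$, and combine with $(A)$'s symmetry $m_k=m_{s-k}$ to get that $\bar\mu$ is a bijection.

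Second, your flagged concern about the K\"unneth identification $R^*(\overline{M}_A)\cong\bigotimes_v R^*(\overline{M}_{g(v),n(v)})$ in part~(iii) is actually resolved by the very hypothesis you are carrying: once each factor is Gorenstein the tensor product has a perfect pairing, the natural surjection to $R^*(\overline{M}_A)$ is compatible with integration, and any kernel would lie in the radical of that perfect pairing, hence is zero. The strict-decrease check in the partial order is exactly as you say: for the self-node one has $g(v)=g-1<g$, and for a separating edge both genera drop or (for a genus-zero tail) $2g(v)-2+n(v)<2g-2+n$ because $|S|\ge 2$.
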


In a similar way Faber defines $R^*_c(M_{g,n}^{ct})$ as the set of elements in 
$R^*(\overline{M}_{g,n})$ that pull back
to zero via the standard map $\overline{M}_{g-1,n+2} \rightarrow \overline{M}_{g,n}$ onto $\Delta_{irr}$. 
By considering the $(D^{ct})$ and $(E^{ct})$ analogues to $(D)$ and $(E)$ 
he shows the compact type version of Theorem \ref{rt}, which reads as follows:
$$B_{(g,n)} \  \text{and} \ C_{(g,n)} \Rightarrow E^{ct}_{(g,n)} \Rightarrow B_{(g,n)} \ \text{and} \ D^{ct}_{(g,n)}$$
and $$\{ D^{ct}_{(g^{'},n^{'})} \}_{(g^{'} , n^{'}) \leq (g,n)} \Rightarrow \{C_{(g^{'}, n^{'})}\}_{(g^{'}, n^{'}) \leq (g,n)}.$$

In \cite{tavakol_1} we show that the tautological ring of the moduli space $M_{1,n}^{ct}$ is Gorenstein. 
This was based on the complete analysis of the space of tautological relations:

\begin{thm}
The space of tautological relations on the moduli space $M_{1,n}^{ct}$ is generated by Keel relations in genus zero and Getzler's relation.
In particular, $R^*(M_{1,n}^{ct})$ is a Gorenstein algebra. 
\end{thm}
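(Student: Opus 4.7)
The plan is to give an explicit presentation of $R^*(M_{1,n}^{ct})$ as a quotient of a polynomial ring in boundary and $\psi$-classes, and then deduce the Gorenstein property by matching the resulting Hilbert function against the pairing induced by the $\lambda_g$-evaluation. First, I would invoke the compact-type version of Theorem \ref{maude}: every tautological class is a $\mathbb{Q}$-linear combination of decorated stratum classes $\xi_{A*}(\prod_v \theta_v)$, where $A$ is a compact-type dual graph of genus $1$ with $n$ legs, hence a tree with exactly one elliptic vertex $v_0$ and rational vertices elsewhere, and $\theta_v$ is a monomial in $\psi$ and $\kappa$ classes. Using the forgetful projections together with the identity $\lambda^2=0$, one may further assume $\theta_{v_0}$ is a polynomial in a single $\psi$-class, so the generators take a quite rigid combinatorial form.

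Next, let $I\subset R^*(M_{1,n}^{ct})$ be the ideal generated by the pullbacks of Keel's relations from each genus-$0$ factor under the gluing maps, together with all pullbacks of Getzler's relation from $\overline{M}_{1,4}$ under the forgetful and gluing maps. The central claim to establish is that $I$ exhausts all tautological relations on $M_{1,n}^{ct}$. I would proceed by induction on $n$ and on the number of edges of the dual graph: for any stratum carrying a rational tail, pushforward along the forgetful morphism of a marking on that tail reduces the problem to $M_{1,n-1}^{ct}$, where Keel relations absorb the genus-$0$ combinatorics; meanwhile on the open part $M_{1,n}\subset M_{1,n}^{ct}$, one reduces to the universal elliptic curve setting over $M_{1,1}$, and the only relations among $\psi$-classes there trace back, via the forgetful maps to $\overline{M}_{1,4}$, to Getzler's relation. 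The base case $n=4$ is then Getzler's original calculation.

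Finally, to deduce the Gorenstein property, I would combine the presentation above with the non-trivial evaluation $\xi\mapsto\int_{\overline{M}_{1,n}}\xi\cdot\lambda_g$ on $R^*(M_{1,n}^{ct})$, whose non-triviality follows from the $\lambda_g$-formula recalled in the evaluations subsection, and verify perfectness of the pairing $R^k(M_{1,n}^{ct})\times R^{n-1-k}(M_{1,n}^{ct})\to\mathbb{Q}$ degree-by-degree by exhibiting dual monomial bases in the quotient. The hard part will be the completeness statement in step two: showing that no tautological relation lies outside $I$. The main obstacle is combinatorial bookkeeping — one must check that the \emph{a priori} enormous space of possible relations among decorated strata collapses, after forgetful and gluing pullbacks, onto just Keel and Getzler. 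My plan is to fix a normal form for the decorated strata based on a careful spanning-subtree choice rooted at $v_0$, and then argue by descent on the edge count that no extra relation survives, with the Gorenstein dimension count providing the sharp upper bound that certifies the reduction is complete.
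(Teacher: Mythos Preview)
Your approach differs substantially from the paper's. Rather than attacking the strata combinatorics on $M_{1,n}^{ct}$ directly, the paper passes to the fiber over a fixed elliptic curve $C$. It first proves that $R^*(C^n)$ is Gorenstein, using two key relations in $R^2(C^3)$ and $R^2(C^4)$ (the fixed-curve incarnations of Getzler's relation) together with the representation theory of symmetric groups and Brauer's centralizer algebras to pin down exact dimensions. It then studies the reduced fiber $\overline{U}_{n-1}$ of $M_{1,n}^{ct}\to M_{1,1}^{ct}$, realized as an iterated blow-up of $C^{n-1}$, and shows its tautological ring is Gorenstein via a filtration whose intersection matrices are triangular. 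Finally it verifies that the fiber relations lift to $M_{1,n}^{ct}$, giving $R^*(M_{1,n}^{ct})\cong R^*(\overline{U}_{n-1})$.

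Your step two has a real gap. The induction as sketched does not prove completeness: pushing a relation forward along a forgetful map can annihilate it, so knowing that the pushforward lies in $I$ tells you nothing about the original class. On the open part $M_{1,n}$, the assertion that all $\psi$-relations ``trace back to Getzler'' is precisely the hard content of the theorem restricted to the interior, and you give no mechanism to establish it. Your closing idea --- that the Gorenstein pairing supplies a sharp upper bound certifying completeness --- would indeed work \emph{if} you could first show that the formal quotient by $I$ is already Gorenstein with one-dimensional socle (then nonvanishing of the top degree in the actual tautological ring forces the relation ideal to equal $I$), but that is exactly what requires an independent argument, and ``normal form plus descent on edge count'' does not supply one. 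The paper's Brauer-algebra computation is what replaces this missing step: it yields the exact dimension of each graded piece of $R^*(C^n)$, which is the input your combinatorial scheme lacks.
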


\begin{proof}
The proof consists of the following parts: 
\begin{itemize}
\item The case of a fixed curve: The tautological ring $R^*(C^n)$, 
for a smooth curve $C$ of genus $g$, was defined by Faber and Pandharipande (unpublished).
They show that the image $RH^*(C^n)$ in cohomology is Gorenstein. 
In \cite{green_griffiths} Green and Griffiths have shown that $R^*(C^2)$ is not Gorenstein, 
for $C$ a generic complex curve of genus $g \geq 4$.
In arbitrary characteristic \cite{yin_fp}. 
The study of the algebra $R^*(C^n)$ shows that it is Gorenstein when $C$ is an elliptic curve. 
It is interesting that there are two essential relations in $R^2(C^3)$ and $R^2(C^4)$ 
which play an important role in the proof of the Gorenstein property of the algebra $R^*(C^n)$. 
These relations are closely related to the relation found by Getzler \cite{getzler_intersection} in $R^2(\overline{M}_{1,4})$. 
In \cite{pandharipande_geometric}, Pandharipande gives a direct construction of Getzler's relation via a rational equivalence 
in the Chow group $A_2(\overline{M}_{1,4})$. 
By using some results from the representation theory of symmetric groups and Brauer's centralizer algebras we prove that 
$R^*(C^n)$ is Gorenstein for $n \geq 1$.

\item The reduced fiber of $M_{1,n}^{ct} \rightarrow M_{1,1}^{ct}$ over $[C] \in M_{1,1}^{ct}$: 
This fiber, which is denoted by $\overline{U}_{n-1}$, is described as a sequence of blow-ups of the variety $C^{n-1}$.
There is a natural way to define the tautological ring $R^*(\overline{U}_{n-1})$ of the fiber $\overline{U}_{n-1}$. 
The analysis of the intersection ring of this blow-up space shows that there is a natural filtration on 
$R^*(\overline{U}_{n-1})$.
As a result we show that the intersection matrices of the pairings for the tautological algebra have a triangular property. 
It follows that $R^*(\overline{U}_{n-1})$ is Gorenstein. 

\item The isomorphism $R^*(M_{1,n}^{ct}) \cong R^*(\overline{U}_{n-1})$: 
For this part we verify directly that the relations predicted by the fiber $\overline{U}_{n-1}$ 
indeed hold on the moduli space $M_{1,n}^{ct}$. 
As a result, we see that the tautological ring $R^*(M_{1,n}^{ct})$ is Gorenstein. 
\end{itemize}

\end{proof}

With very similar methods we can describe the tautological ring of the moduli space $M_{2,n}^{rt}$:

\begin{thm}
The tautological ring of the moduli space $M_{2,n}^{rt}$ is generated by the following relations:

\begin{itemize}
\item Getzler's relation in genus two,

\item Belorousski-Pandharipande relation,

\item A degree 3 relation on $M_{2,6}^{rt}$.

\item Extra relations from the geometry of blow-up.

\end{itemize}

\end{thm}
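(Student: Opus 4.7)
The plan is to adapt the three-step strategy used for $M_{1,n}^{ct}$ in the previous theorem, but now with the extra data required in genus two: a larger moduli of base curves, two lambda classes, and a richer hyperelliptic structure. The aim is to show that the list of relations advertised in the statement (Getzler's genus-two relation, the Belorousski-Pandharipande relation, the degree-three relation on $M_{2,6}^{rt}$, and the blow-up relations) generates the entire ideal of tautological relations, so that $R^*(M_{2,n}^{rt})$ coincides with the Gorenstein algebra produced on the geometric model.

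The first step is to analyze the tautological ring $R^*(C^n)$ for a fixed smooth curve $C$ of genus $2$. Following the Faber-Pandharipande framework and the approach of Yin, I would introduce the classes $K_i$, $D_{ij}$ together with a class recording the hyperelliptic involution (via the $g^1_2$), and exploit $S_n$-equivariance to reduce to the study of a finite-dimensional representation. The key inputs at this stage are the identities on the diagonal in $C\times C$ forced by $K_C$ being the hyperelliptic divisor class, together with the analogues of the essential relations in $R^2(C^3)$ and $R^2(C^4)$ that were decisive in genus one; in genus two these should be enhanced by one more genuinely new relation, which I expect to correspond to the degree-three relation on $M_{2,6}^{rt}$ in the statement. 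A Brauer-algebra / symmetric-group computation, parallel to the genus-one argument, should then show that $R^*(C^n)$ is Gorenstein with the expected socle.

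The second step is to study the reduced fiber $\overline{U}_{n-1}$ of the projection $M_{2,n}^{rt}\rightarrow M_2$ over a generic point $[C]\in M_2$. This fiber is a sequence of blow-ups of $C^{n-1}$ along the (proper transforms of) partial diagonals, exactly as in genus one, and the blow-up structure endows $R^*(\overline{U}_{n-1})$ with a natural filtration whose graded pieces are controlled by $R^*(C^m)$ for $m\le n-1$. From the triangular form of the intersection pairing on the associated graded, one deduces that $R^*(\overline{U}_{n-1})$ is Gorenstein, and one reads off a minimal set of relations: the tautological relations of $C^{n-1}$ together with the "extra relations from the geometry of blow-up" listed in the statement. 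The third step is then to lift: one checks that each of these generating relations holds on $M_{2,n}^{rt}$ itself, by identifying them with Getzler's relation, the Belorousski-Pandharipande relation, the degree-three relation on $M_{2,6}^{rt}$, and boundary restrictions of Keel relations pulled back through the universal curve. The surjection $R^*(\overline{U}_{n-1})\rightarrow R^*(M_{2,n}^{rt})$ obtained from the inclusion of the fiber is then forced to be an isomorphism, yielding the Gorenstein property.

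The principal obstacle will be Step 1, namely the analysis of $R^*(C^n)$ for genus-two curves. Unlike the elliptic case, where Getzler's relation alone provided the missing input beyond Keel, in genus two the hyperelliptic involution produces additional tautological identities and the representation-theoretic bookkeeping becomes substantially heavier; in particular, one must establish that no relation beyond those listed is required, which amounts to a nontrivial rank computation on each $S_n$-isotypic component. A secondary difficulty is identifying the blow-up relations geometrically as restrictions of global relations on $\M_{2,n}$, since some of the partial-diagonal strata interact with the boundary of $M_{2,n}^{rt}$ in a more subtle way than in genus one; here the Belorousski-Pandharipande relation, pushed forward from an admissible-cover construction as in Proposition \ref{hur}, plays the role that Getzler's relation played before.
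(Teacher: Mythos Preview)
Your three-step strategy---fixed curve, then the reduced fibre, then lifting relations to the moduli space---is exactly the approach the paper takes, and the identification of the listed relations with the essential relations on the fibre is in the same spirit. Two details, however, do not line up with the paper and would derail the argument if left as written.

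First, you carry over the genus-one pattern and look for the essential relations in $R^2(C^3)$ and $R^2(C^4)$, adding ``one more'' for the degree-three item. In genus two the paper finds precisely two essential relations, and they sit in $R^2(X^3)$ and $R^3(X^6)$, not in $R^2(X^4)$. The codimension-two relation on $X^3$ is what corresponds to the Faber and Belorousski--Pandharipande relations (and to Getzler's genus-two relation), while the degree-three relation on $M_{2,6}^{rt}$ is the shadow of the $R^3(X^6)$ relation; the latter is obtained by Faber's method with the sheaves $\mathbb{F}_d$ rather than from a Brauer-algebra computation. The hyperelliptic involution is not an extra input in the way you suggest.

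Second, your description of the fibre is off by one. The natural projection here is $\pi:\overline{M}_{2,n}\to\overline{M}_2$, with no marked point on the base, so the reduced fibre over $[X]\in M_2$ is $n$-dimensional: it is the Fulton--MacPherson compactification $X[n]$ of the configuration space $F(X,n)$, not an $\overline{U}_{n-1}$ built as a blow-up of $C^{n-1}$. The filtration and triangular-pairing argument you outline does go through for $X[n]$, but the bookkeeping (which strata appear, how the blow-up relations arise) is different from the genus-one $\overline{U}_{n-1}$ picture, and you should set it up in that framework from the start.
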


\begin{proof}
The method of the study of the tautological ring $R^*(M_{2,n}^{rt})$ in \cite{tavakol_2},
is similar to that of $M_{1,n}^{ct}$: 
We first consider a fixed smooth curve $X$ of genus two. 
In this case there are two essential relations in $R^2(X^3)$ and $R^3(X^6)$ to get the 
Gorenstein property of $R^*(X^n)$ for $n \geq 1$.
The relation in $R^2(X^3)$ is closely related to known relations discovered by Faber \cite{faber_conjectural} and 
Belorousski-Pandharipande \cite{belorousski_pandharipande_descendent} on different moduli spaces. 
The relation in $R^3(X^6)$ is proven using the same method as Faber used in \cite{faber_conjectural}. 
To find the relation between $R^*(X^n)$ and the tautological ring of the moduli space $M_{2,n}^{rt}$ 
we consider the reduced fiber of $\pi:\overline{M}_{2,n} \rightarrow \overline{M}_2$ over $[X] \in M_2$, 
which is the Fulton-MacPherson 
compactification $X[n]$ of the configuration space $F(X,n)$. 
There is a natural way to define the tautological ring for 
the space $X[n]$. The study of this algebra shows that it is Gorenstein. 
We finish the proof by showing that there is an isomorphism between the tautological ring of $X[n]$ and $R^*(M_{2,n}^{rt})$. 
\end{proof}

\subsection{Failure}

According to Gorenstein conjectures tautological rings should have a form of Poincar\'e duality.
Computations of Faber for $g <24$ verifies this expectation for $M_g$.
There are more evidences for pointed spaces \cite{petersen_1, tavakol_1, tavakol_2}.
The striking result of Petersen and Tomassi \cite{petersen_tommasi_gorenstein} showed the existence of a counterexample in genus two.
More precisely, they showed that for some $n$ in the set $\{12, 16, 20\}$ the tautological ring of $\M_{2,n}$ is not Gorenstein. 
We know that the moduli space is of dimension $n+3$ and the failure happens for the pairing between degrees $k,k+1$ when $n=2k-2$.
A crucial ingredient in their approach is to give a characterization of tautological classes in cohomology.
This part is based on the results \cite{harder_eisenstein} of Harder on Eisenstein cohomology of local systems on the moduli of abelian surfaces.
In another work \cite{petersen_2} Petersen determined the group structure of the cohomology of $M_{2,n}^{ct}$.
In particular, he shows that $R^*(M_{2,8}^{ct})$ is not Gorenstein.
While there are counterexamples for Gorenstein conjectures the case of $M_{g,n}^{rt}$ is still open.
A very interesting case is the moduli space $M_{24}$ of curves of genus 24.
In this case all known methods do not give a Gorenstein algebra.

%\cite{petersen_tommasi_gorenstein, petersen_2, petersen_d_elliptic, petersen_abelian_surfaces}

\subsection{Pixton's conjecture}

One of the most important applications of the Gorenstein conjectures was that they would determine the ring structure.
After the counterexamples to these conjectures the key question is to give a description of the space of tautological relations.
In \cite{pixton_conjectural} Pixton proposed a class of conjectural tautological relations on the Deligne-Mumford space $\M_{g,n}$.
His relations are defined as weighted sums over all stable graphs on the boundary $\partial M_{g,n}=\M_{g,n} \setminus M_{g,n}$.
Weights on graphs are tautological classes.
Roughly speaking, his relations are the analogue of Faber-Zagier relations which were originally defined on $M_g$.
Pixton also conjectures that all tautological relations have this form.
A recent result \cite{pixton_pandharipande_zvonkine_relations} of Pandharipande, Pixton and Zvonkine shows that Pixton's relations are connected with the Witten's class on the space of curves with 3-spin structure.
Their analysis shows that Pixton's relations hold in cohomology.
Janda \cite{janda_tautological} derives Pixton's relations by applying the virtual localization formula to the moduli space of stable quotients discussed before.
This establishes Pixton's relations in Chow. 
The proof given in \cite{pixton_pandharipande_zvonkine_relations} shows more than establishing Pixton's relations in cohomology. 
It shows that semi-simple cohomological field theories give a rich source of producing tautological relations.
These connections are studied in subsequent articles \cite{janda_comparing, janda_relations}.
In \cite{janda_semisimple} Janda shows that any relation coming from semi-simple theories can be expressed in terms of Pixton's relations. 
Witten class concerns the simplest class of isolated singularities.
A more general construction due to Fan-Jarvis-Ruan \cite{FJRW_virtual, FJRW_quantum}, also known as FJRW theory, provides an analytic method to construct the virtual class for the moduli spaces associated with isolated singularities.
An algebraic approach in constructing the virtual class is given by Polishchuk and Vaintrob \cite{polishchuk_vaintrob_matrix}.
This construction gives a powerful tool to produce tautological relations in more general cases.
See \cite{guere_hodge} for more details. 
Several known relations such as Keel's relation \cite{keel_intersection} on $\M_{0,4}$, 
Getzler's relation \cite{getzler_intersection} on $\M_{1,4}$ and Belorousski-Pandharipande relation \cite{belorousski_pandharipande_descendent} on $\M_{2,3}$ follow from Pixton's relations.
For a recent survey on tautological classes we refer the reader to \cite{pandharipande_calculus}.

\subsection{Tautological relations from the universal Jacobian}

In his recent thesis \cite{yin_thesis}, Yin studies the connection between tautological classes on moduli spaces of curves and the universal Jacobian.
The $\mathfrak{sl}_2$ action on the Chow group of abelian schemes and   
Polishchuk's differential operator give a rich source of tautological relations.

\subsubsection*{Tautological classes on the Jacobian side}

The tautological ring of a fixed Jacobian variety under algebraic equivalence is defined and studied by Beauville \cite{beauville_algebraic}. 
One considers the class of a curve of genus $g$ 
inside its Jacobian and apply all natural operators to it induced from 
the group structure on the Jacobian and the intersection product in the Chow ring. 
The following result is due to Beauville \cite{beauville_algebraic}:

\begin{thm} 
The tautological ring of a Jacobian variety is finitely generated.
Furthermore, it is stable under the Fourier-Mukai transform.
\end{thm}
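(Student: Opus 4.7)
The plan is to exploit the Beauville decomposition of the Chow ring (modulo algebraic equivalence) of the Jacobian $J$, which writes $CH^i(J)_{\mathbb{Q}} = \bigoplus_{s} CH^i_{(s)}(J)$, where $[n]^*$ acts on $CH^i_{(s)}(J)$ by multiplication by $n^{2i-s}$ and $[n]_*$ by $n^{2g-2i+s}$. The first step is to decompose the class of the Abel-Jacobi embedded curve, writing $[C] = \sum_{s=0}^{g-1} [C]_{(s)}$ with $[C]_{(s)} \in CH^{g-1}_{(s)}(J)$; the indices $s<0$ vanish by the standard vanishing results of Beauville, and the range $s \le g-1$ follows from dimensional constraints. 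Because $[n]^*$ and $[n]_*$ act by distinct scalars on the pieces for different $s$, each component $[C]_{(s)}$ is separately a linear combination of $[n]^*[C]$ and hence lies in $\mathcal{T}(J)$.

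Next, I would introduce the Fourier-Mukai duals $p_{s+1} := F([C]_{(s)}) \in CH^{s+1}_{(s)}(J)$, which lie in $\mathcal{T}(J)$ by the assumed closure under $F$. The main algebraic step is to prove that $\mathcal{T}(J)$ coincides with the subalgebra generated, under the intersection product, by the finitely many classes $p_1, \dots, p_g$ (equivalently, by the $[C]_{(s)}$ under the Pontryagin product, via Fourier duality). This rests on the exchange formulas
\begin{equation*}
F(x \cdot y) = (-1)^g\, F(x) * F(y), \qquad F(x * y) = F(x) \cdot F(y),
\end{equation*}
which imply that every word in the intersection and Pontryagin products of the generators and their Fourier transforms can be rewritten, after applying $F$ an appropriate number of times, as a polynomial in the $p_i$ (or equivalently the $[C]_{(s)}$). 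Similarly, $F^2 = (-1)^g [-1]^*$ up to a twist, so Fourier transforms of polynomials in the $p_i$ stay inside the subalgebra after accounting for the involution $[-1]^*$, which acts by $\pm 1$ on each $CH^i_{(s)}$.

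With these identifications, stability under $F$ follows because $F$ permutes the generating set $\{[C]_{(s)}, p_{s+1}\}$ up to sign, and stability under $[n]^*$, $[n]_*$ follows because these act diagonally on the Beauville grading and therefore preserve any subspace built out of graded pieces. Finite generation as a $\mathbb{Q}$-algebra under the intersection product is then immediate from the finiteness of the index set $0 \le s \le g-1$.

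The hard part will be verifying closure under all the listed operations simultaneously and tracking carefully the bigrading $(i,s)$ through Fourier transform and the two products; in particular, one must check that the Pontryagin product of two tautological classes is again expressible as a Fourier transform of an intersection-product polynomial in the $p_i$, and this bookkeeping is where the Fourier exchange formulas must be applied with precise attention to the signs and to the shift $s \mapsto s$, $i \mapsto g-i+s$. Once this verification is complete, both finite generation and Fourier stability fall out of the same structural description.
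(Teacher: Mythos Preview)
The paper does not actually prove this theorem; it attributes the result to Beauville and only remarks afterward that the components of the curve class in the Beauville decomposition lie in the tautological algebra. Your outline follows Beauville's strategy closely, but as written it contains a genuine circularity.

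You assert that $p_{s+1} := F([C]_{(s)})$ lies in $\mathcal{T}(J)$ ``by the assumed closure under $F$'', and then at the end you deduce closure under $F$ from the fact that $F$ permutes the set $\{[C]_{(s)}, p_{s+1}\}$. But closure under $F$ is precisely one of the two assertions to be proved: in Beauville's setting (and in the paper's description) the tautological ring is defined as the smallest $\mathbb{Q}$-subalgebra containing $[C]$ and closed under intersection product, Pontryagin product, and the operators $[n]^*$, $[n]_*$; the Fourier transform is \emph{not} among the defining operations. Without an independent reason why $p_{s+1} \in \mathcal{T}(J)$, your argument only shows that the subalgebra generated by the $p_{s+1}$ is Fourier-stable and finitely generated, not that this subalgebra coincides with $\mathcal{T}(J)$.

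The missing ingredient, and the actual first step in Beauville's proof, is that the theta divisor is itself tautological. Modulo algebraic equivalence, Poincar\'e's formula gives $[C]^{*(g-1)} = (g-1)!\,\theta$, so $\theta$ already lies in the Pontryagin-generated part of $\mathcal{T}(J)$. Once $\theta \in \mathcal{T}(J)$, the Fourier transform can be expressed entirely through the defining operations via a Mukai-type identity of the shape
\[
F(x) \;=\; \exp(\theta)\cdot\bigl([-1]^*(\exp(-\theta)\cdot x)*\exp(-\theta)\bigr),
\]
which immediately yields Fourier stability of $\mathcal{T}(J)$. With that in hand, your finite-generation argument (each $[C]_{(s)}$ is a Vandermonde combination of the $[n]^*[C]$, the exchange formulas reduce everything to intersection polynomials in the finitely many $p_{s+1}$) goes through exactly as you describe.
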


In fact, if one applies the Fourier transform to the class of the curve, 
all components in different degrees belong to the tautological algebra.
The connection between tautological classes on the universal Jacobian and moduli of curves is studied in the recent thesis of Yin \cite{yin_thesis}.
Let $\pi: \CC \too S$ be a family of smooth curves of genus $g > 0$ which admits a section $s: S \too \CC$. 
Denote by $\J_g:=\text{Pic}^0(\CC/S)$ the relative Picard scheme of divisors of degree zero. 
It is an abelian scheme over the base $S$ of relative dimension $g$.
The section $s$ induces an injection $\iota: \CC \too \J_g$ from $\CC$ into the universal Jacobian $\J_g$.
The geometric point $x$ on a curve $C$ is sent to the line bundle $\OO_C(x-s)$ via the morphism $\iota$. 
The abelian scheme $\J_g$ is equipped with the Beauville decomposition defined in \cite{beauville_algebraic}. 
Components of this decomposition are eigenspaces of the natural maps corresponding to multiplication with integers.
More precisely, for an integer $k$ consider the associated endomorphism on $\J_g$.
The subgroup $A^i_{(j)}(\J_g)$ is defined as all degree $i$ classes on which the morphism $k^*$ acts via multiplication with $k^{2i-j}$.
Equivalently, the action of the morphism $k_*$ on $A^i_{(j)}(\J_g)$ is multiplication by $k^{2g-2i+j}$. 
The Beauville decomposition has the following form:
$$A^*(\J_g)=\oplus_{i,j} A_{(i,j)}(\J_g),$$
where $A_{(i,j)}(\J_g):=A_{(j)}^{\frac{i+j}{2}}(\J_g)$ for $i \equiv j$ mod 2.
The Pontryagin product $x * y$ of two Chow classes $x,y \in A^*(\J_g)$ is defined as $\mu_*(\pi_1^* x \cdot \pi_2^* y)$, where
$$\mu: \J_g \times_S \J_g \too \J_g$$
 
and $\pi_1,\pi_2: \J_g \times_S \J_g \too \J_g$ be the natural projections.
The universal theta divisor $\theta$ trivialized along the zero section 
is defined in the rational Picard group of $\J_g$.
It defines a principal polarization on $\J_g$. 
Let $\mathcal{P}$ be the universal Poincar{\'e} bundle on $\J_g \times_S \J_g$ trivialized along the zero sections.
Here we use the principal polarization to inentify $\J_g$ with its dual.
The first Chern class $l$ of $\mathcal{P}$ is equal to 
$\pi_1^* \theta+\pi_2^* \theta-\mu^* \theta$.
The Fourier Mukai transform $\CF$ gives an isomorphism between $(A^* (\J_g),.)$ and $(A^*(\J_g),*)$.
It is defined as follows:
$$\CF(x)=\pi_{2,*}(\pi_1^* x \cdot \exp(l)).$$ 
We now recall the definition of the tautological ring of $\J_g$ from \cite{yin_thesis}.
It is defined as the smallest $\QQ$-subalgebra of the Chow ring $A^*(\J_g)$ which contains the class of $\CC$ and is stable under the Fourier transform and all maps $k^*$ for integers $k$. 
It follows that for an integer $k$ it becomes stable under $k_*$ as well.
One can see that the tautological algebra is finitely generated.
In particular, it has finite dimensions in each degree.
The generators are expressed in terms of the components of the curve class in the Beauville decomposition.
Define the following classes:

$$p_{i,j}:=\CF \left(\theta^{\frac{j-i+2}{2}} \cdot [\CC]_{(j)} \right) \in A_{(i,j)}(\J_g).$$

We have that $p_{2,0}=-\theta$ and $p_{0,0}=g[\J_g]$. 
The class $p_{i,j}$ vanishes for $i<0$ or $j<0$ or $j>2g-2$.
The tautological class $\Psi$ is defined as $$\Psi:=s^*(K),$$

where $K$ is the canonical class defined in Section 1.
The pull-back of $\Psi$ via the natural map $\J_g \too S$ is denoted by the same letter. 

\subsection{Lefschetz decomposition of Chow groups}
The action of $\mathfrak{sl}_2$ on Chow groups of a fixed abelian variety was studied by K{\"u}nnemann \cite{kunnemann_lefschetz}.
Polishchuk \cite{polishchuk_universal} has studied the $\mathfrak{sl}_2$ action for abelian schemes. 
We follow the standard convention that $\mathfrak{sl}_2$ is generated by elements $e,f,h$ satisfying: 
$$[e,f]=h \D [h,e]=2e, \D [h,f]=-2f.$$
In this notation the action of $\mathfrak{sl}_2$ on Chow groups of $\J_g$ is defined as
$$e: A_{(j)}^i(\J_g) \too A_{(j)}^{i+1}(\J_g) \D x \too -\theta \cdot x,$$
$$f: A_{(j)}^i(\J_g) \too A_{(j)}^{i-1}(\J_g) \D x \too -\frac{\theta^{g-1}}{(g-1)!} * x,$$
$$h: A_{(j)}^i(\J_g) \too A_{(j)}^i(\J_g) \D x \too -(2i-j-g) x,$$

The operator $f$ is given by the following differential operator:

$$\mathcal{D}=\frac{1}{2} \sum_{i,j,k,l} \left( \Psi p_{i-1,j-1}p_{k-1,l-1}- \binom{i+k-2}{i-1}p_{i+k-2,j+l} \right) \partial p_{i,j} \partial p_{k,l}+\sum_{i,j} p_{i-2,j}\partial p_{i,j}.$$

The following fact is proved in \cite{yin_thesis}:

\begin{thm}
The tautological ring of $\J_g$ is generated by the classes $\{p_{i,j}\}$ and $\Psi$.
In particular, it is finitely generated. 
\end{thm}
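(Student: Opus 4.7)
The claim is that the tautological subring $T(\J_g)$ equals the $\QQ$-subalgebra $R$ generated by $\{p_{i,j}\}$ and $\Psi$. I will establish the two inclusions; the hard direction is showing that $R$ is closed under the Fourier transform.

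\textbf{The inclusion $R \subseteq T(\J_g)$.} First I would produce each generator from $[\CC]$ using only the admissible operations. Since the morphisms $k^*$ act on $A^i_{(j)}(\J_g)$ by distinct scalars $k^{2i-j}$ as $j$ varies, inverting a Vandermonde system isolates each Beauville component $[\CC]_{(j)}$ as a rational combination of $k^*[\CC]$'s, so each is tautological. The Poincar\'e formula identifies $[\CC]_{(0)}$ with a nonzero multiple of $\theta^{g-1}/(g-1)!$, and applying $\CF$ (together with the involution $\CF^2=(-1)^g[-1]^*$) then recovers $\theta$ itself. Hence every $p_{i,j}=\CF\bigl(\theta^{(j-i+2)/2}\cdot[\CC]_{(j)}\bigr)$ is tautological. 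For $\Psi$, I would extract it from the self-intersection $[\CC]\cdot[\CC]$: the embedding $\iota\colon\CC\into\J_g$ has normal bundle built from the relative cotangent sheaf of $\pi$, so $\iota^*[\CC]$ involves $K$, whose pullback along $s$ is $\Psi$; reading off the appropriate Beauville component places $\Psi$ in $T(\J_g)$.

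\textbf{The inclusion $T(\J_g)\subseteq R$.} I must verify that $R$ contains $[\CC]$ and is stable under $\cdot$, under all $k^*$, and under $\CF$. Closure under the intersection product is automatic because $R$ is a subalgebra. Closure under $k^*$ is automatic from bigrading: each generator $p_{i,j}$ lies in $A_{(i,j)}$ and $\Psi$ lies in $A_{(2,0)}$, on which $k^*$ acts by a single scalar. To place $[\CC]$ inside $R$, I would use Fourier inversion: the case $i=j+2$ of the definition gives $p_{j+2,j}=\CF\bigl([\CC]_{(j)}\bigr)$, so $[\CC]_{(j)}=\pm[-1]^*\CF(p_{j+2,j})$, and summing over $j$ recovers $[\CC]$, provided closure under $\CF$ is established.

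\textbf{Main obstacle: closure of $R$ under $\CF$.} The key tool is the Lefschetz $\mathfrak{sl}_2$-triple described just before the theorem. The raising operator $e$ is multiplication by $-\theta=p_{2,0}\in R$, and so visibly preserves $R$. The lowering operator $f$ is realized by Polishchuk's differential operator $\mathcal{D}$, which is a polynomial differential operator whose coefficients and partial derivatives are expressed entirely in terms of the generators $p_{i,j}$ and $\Psi$; therefore $\mathcal{D}$ preserves the polynomial subring $R$. Consequently $R$ is a sub-$\mathfrak{sl}_2$-representation of $A^*(\J_g)$. Because the Fourier transform on an abelian scheme can be written as the standard Weyl element of this $\mathfrak{sl}_2$-action (concretely $\CF=\exp(e)\exp(-f)\exp(e)$ up to a scalar on each weight space, after Polishchuk and K\"unnemann), it too preserves $R$. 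This is the step I expect to require the most care, since it is where the specific shape of $\mathcal{D}$, together with the algebraic identity expressing $\CF$ through the $\mathfrak{sl}_2$-triple, must be checked. Once this is done, both inclusions hold and $R=T(\J_g)$; finite generation follows immediately from the vanishing $p_{i,j}=0$ outside the finite range $0\le i$, $0\le j\le 2g-2$.
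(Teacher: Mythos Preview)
The paper does not actually prove this theorem; it simply attributes it to Yin's thesis with the line ``The following fact is proved in \cite{yin_thesis}'', so there is no in-paper argument to compare against directly.

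That said, your plan is the argument one expects (and finds) in Yin, and it is consistent with the surrounding discussion in the paper. The crucial step---closure of $R$ under $\CF$---is correctly reduced to closure under the Lefschetz $\mathfrak{sl}_2$-triple via the identity expressing the Fourier transform as the Weyl element $\exp(e)\exp(-f)\exp(e)$ (Polishchuk). Since $e$ is multiplication by $-\theta=p_{2,0}\in R$ and $f$ is realized by the differential operator $\mathcal{D}$, whose coefficients are polynomials in $\Psi$ and the $p_{i,j}$, both visibly preserve $R$, hence so does $\CF$. This is precisely the mechanism the paper is pointing to when it displays $\mathcal{D}$ immediately before stating the theorem.

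Two places to tighten. First, your extraction of $\Psi$ from $[\CC]\cdot[\CC]$ is correct in spirit but should be made explicit: compute $\iota^*\iota_*(1)$ via the normal-bundle (self-intersection) formula for $\iota:\CC\hookrightarrow\J_g$, push forward to the base, and read off the $A_{(0,0)}$-component; this is where $\Psi$ appears with a nonzero coefficient. Second, for $\exp(e)$ and $\exp(f)$ to make sense you need local nilpotence of $e$ and $f$ on each bigraded piece $A_{(i,j)}(\J_g)$; this is part of the Lefschetz package for abelian schemes (K\"unnemann \cite{kunnemann_lefschetz}, Polishchuk \cite{polishchuk_universal}) and should be cited rather than assumed.
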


The differential operator $\mathcal{D}$ is a powerful tool to produce tautological relations. 
We can take any relation and apply the operator $\mathcal{D}$ to it several times. 
This procedure yields a large class of tautological relations.
One can get highly non-trivial relations from this method. 
All tautological relations on the universal curve $\CC_g$ for $g \leq 19$ and on $\CM_g$ for $g \leq 23$ are recovered using this method.
The following conjecture is proposed by Yin:

\begin{conj}
Every relation in the tautological ring of $\CC_g$ comes from a relation on the universal Jacobian $\J_g$.
\end{conj}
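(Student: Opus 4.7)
The plan is to study the pullback morphism $\iota^*: R^*(\J_g) \too R^*(\CC_g)$ attached to the Abel-Jacobi embedding $\iota: \CC \into \J_g$ and to break the conjecture into two assertions: surjectivity of $\iota^*$ onto the tautological ring of the universal curve, and the claim that $\ker(\iota^*)$ is generated by genuine tautological relations on $\J_g$. Together these imply the conjecture, since every relation on $\CC_g$ is then automatically the $\iota^*$-image of a relation on $\J_g$. Surjectivity is the easier step: using that the section $s$ trivializes $\theta$ along the zero section, one computes $\iota^* \theta$ explicitly in terms of $K$ and the pulled-back $\kappa$-classes, and expanding the Fourier-Mukai formula $p_{i,j}=\CF\!\left(\theta^{(j-i+2)/2}\cdot[\CC]_{(j)}\right)$ along $\iota$ expresses each $\iota^* p_{i,j}$ as a universal polynomial in $K$ and the $\kappa_i$. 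Combined with $\iota^*\Psi=K$, these exhaust the generators of $R^*(\CC_g)$.

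The heart of the argument is Polishchuk's differential operator $\mathcal{D}$, which converts any tautological relation on $\J_g$ into another. I would take as seed relations the Beauville vanishings $p_{i,j}=0$ for $j<0$, $j>2g-2$, or $i<0$, the minimal-class identity $[\CC]\cdot\theta^{g-1}/(g-1)!=s_*[M_g]$, and the principal polarization identity, and then argue that the $\mathcal{D}$-orbit of these seeds pulls back under $\iota^*$ to a complete list of tautological relations on $\CC_g$. In particular, the Faber-Zagier/Pixton relations on $M_g$ and their extensions to the universal curve should all appear this way, matching the computational verification for $g \le 19$ already mentioned in the excerpt.

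The main obstacle is establishing completeness in arbitrary genus: one must rule out phantom relations on $\CC_g$ that admit no lift to $\J_g$. Because $\CC$ has codimension $g-1$ inside $\J_g$, most of $R^*(\J_g)$ is annihilated by $\iota^*$, so controlling which relations descend from the Jacobian side is delicate. I expect this step to require either a motivic Chow-K\"unneth decomposition of $\J_g$ compatible with the Beauville filtration and with the embedding $\iota$, or a uniform lifting procedure extending the representation-theoretic analysis used for a fixed curve (with symmetric group and Brauer algebra inputs) to the family $\CC \too M_g$. No such uniform argument is currently available, which is why the statement is offered as a conjecture rather than a theorem.
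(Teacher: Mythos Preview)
The statement under discussion is a \emph{conjecture}, not a theorem, and the paper offers no proof of it. It is attributed to Yin and recorded with the remark that the $\mathfrak{sl}_2$ action is further conjectured to be the only source of tautological relations; the only supporting evidence cited is the computational verification that all tautological relations on $\CC_g$ for $g\le 19$ and on $M_g$ for $g\le 23$ arise from the Jacobian side via Polishchuk's operator $\mathcal{D}$. There is therefore nothing in the paper to compare your argument against.

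To your credit, you recognize this yourself in the final paragraph: the completeness step has no known uniform argument, and that is exactly why the statement is labelled a conjecture. What precedes that paragraph, however, is not a proof but a heuristic outline. Two points are worth flagging. First, your decomposition into ``surjectivity of $\iota^*$'' plus ``$\ker(\iota^*)$ is generated by genuine relations'' is not quite the right framing: the conjecture is about relations among tautological classes, not about whether $\iota^*$ is surjective on tautological rings (which is essentially automatic once one knows the generators), and the content lies entirely in showing that the ideal of relations on $\CC_g$ is the $\iota^*$-image of the ideal of relations on $\J_g$. Second, your proposed seed relations together with the $\mathcal{D}$-orbit are precisely what Yin uses computationally, but there is at present no mechanism---motivic, representation-theoretic, or otherwise---that proves this orbit exhausts all relations for general $g$. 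Your closing assessment is accurate: this remains open.
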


Yin further conjectures that the $\mathfrak{sl}_2$ action is the only source of all tautological relations.
For more details and precise statements we refer to Conjecture 3.19. in \cite{yin_thesis}.

%\cite{yin_thesis, tavakol_hyperelliptic}

\subsubsection*{Relations on $C_g^n$ from the universal Jacobian}

The work of Yin \cite{yin_thesis} shows that the analysis of tautological classes on the Jacobian side is a powerful tool in the study of tautological relations.
A natural question is whether one can prove relations in more general cases.
One can define many maps from products of the universal curve into the universal Jacobian.
The pull-back of tautological relations give a rich source of relations on moduli spaces of curves.
However, these relations do not seem to be sufficient to produce all relations.
A crucial problem is that they have a symmetric nature and one needs to find an effective way to break the symmetry.
The first example happens in genus 3 with 5 points.
In this case there is a \emph{symmetric} relation of degree 3 which is not the pull-back of any relation from the Jacobian side.
Here we describe a simple method which can be a candidate for breaking the symmetry.
Let $S=\CM_{g,1}$ and consider the universal curve $\pi:\CC \too S$ together with the natural section $s:S \too \CC$.
The image of the section $s$ defines a divisor class $x$ on $\CC$.
For a natural number $n$ consider the $n$-fold fibred product $\CC^n$ of $\CC$ over $S$.
Consider the map

$$\phi_n:\CC^n \too \J_g$$

which sends a moduli point $(C, p_1, \dots, p_n)$ to $\sum_{i=1}^n p_i-n \cdot x$.
In \cite{yin_thesis} Yin describes an algorithm to compute the pull-back of tautological classes on $\J_g$ to $\CC^n$ via these maps.
Let $\CR_n$ be the polynomial algebra generated by tautological classes on $\CC^n$. 
We define the ideal $\I_n \subset \CR_n$ of tautological relations as follows:
For each relation $R$ on the Jacobian side and an integer $m \geq n$ consider the pull-back $\phi_m^*(R)$ on $\CC^m$.
We can intersect $\phi_m^*(R)$ with a tautological class and push it forward to $\CC^k$ for $n \leq k \leq m$.
This procedure yields many tautological relations in $R^*(\CC^n)$.
Denote the resulting space of relations by $\I_n$.
The following seems to be a natural analogue of Yin's conjecture for $\CC^n$:

\begin{conj}\label{Jacobian}
The natural map $\frac{\CR_n}{\I_n} \too R^*(\CC^n)$ is an isomorphism.
\end{conj}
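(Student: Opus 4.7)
The plan is to show that every tautological relation on $\CC^n$ arises, via intersection and push-forward, from a relation pulled back from the universal Jacobian $\J_g$ by some $\phi_m$ with $m\ge n$. Surjectivity of the map $\CR_n/\I_n \too R^*(\CC^n)$ is automatic, since $\CR_n$ is by definition the polynomial algebra on the tautological generators; the substance of the conjecture is injectivity, namely that $\I_n$ exhausts the ideal of relations among those generators. As a first step I would make the pullback $\phi_n^*$ completely explicit on the generators $\{p_{i,j},\Psi\}$ of the Jacobian tautological algebra, following Yin's algorithm. The resulting expressions are universal polynomials in the classes $K_i$, $D_{ij}$, the section class $x$, and the tautological classes pulled back from $S=M_{g,1}$. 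Combined with Yin's conjecture applied to $\CC_g$, this should show that, together with pullbacks from $S$, the image of $\phi_n^*$ generates the subalgebra of $S_n$-invariant tautological classes on $\CC^n$, and that every polynomial identity among these images is accounted for by an identity on $\J_g$ coming from the $\mathfrak{sl}_2$-action and Polishchuk's differential operator $\DD$.

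The central difficulty, as the author flags, is the symmetry problem. The map $\phi_n$ is manifestly $S_n$-equivariant, so a pullback $\phi_n^*R$ is always $S_n$-invariant, and pullbacks alone cannot detect asymmetric relations on $\CC^n$. The two operations built into the definition of $\I_n$ that can break symmetry are: multiplying a pullback by an asymmetric tautological class (for example a single diagonal $D_{ij}$ or a $\psi$-class attached to one specific marked point), and pushing forward from $\CC^m$ to $\CC^n$ for $m>n$, which breaks symmetry between the retained and the integrated-out factors. The key technical step is to show that these two operations together are rich enough to produce every asymmetric relation on $\CC^n$. I would attempt this by decomposing $R^*(\CC^n)$ into $S_n$-isotypic components and arguing, inductively on the associated Young diagram, that the needed relations in each isotypic component can be manufactured from symmetric relations on higher powers.

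The hard part, and the first concrete test case I would tackle, is the symmetric degree-$3$ relation on $\CC^5$ in genus $3$ mentioned at the start of this subsection: it is symmetric yet not itself a pullback from $\J_3$. Proving that it lies in $\I_5$ requires exhibiting it as the push-forward from some $\CC^m$ with $m\ge 6$ of an intersection of $\phi_m^*(R)$ with a tautological class, for some relation $R$ on $\J_3$ produced by the operator $\DD$. Success here would strongly support a general mechanism: every relation on $\CC^n$ lifts, after multiplication by a suitable auxiliary tautological class, to a pullback from the Jacobian on some larger $\CC^m$, and is recovered from there by fibre integration. Failure in even one case would indicate that Conjecture \ref{Jacobian} must be enlarged, for instance by allowing pullbacks along maps into higher symmetric powers $\text{Sym}^k(\CC)$ or into relative Picard schemes $\text{Pic}^d(\CC/S)$ for other degrees $d$.
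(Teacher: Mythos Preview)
The statement you are attempting to prove is a \emph{conjecture} in the paper, not a theorem; the paper gives no proof of it and does not claim one. What the paper does provide is evidence: Theorem~\ref{genus2} establishes the genus~$2$ case by direct computation (showing that the three basic relations from \cite{tavakol_2} all follow from the single Jacobian relation $p_{3,1}^2=0$ and the vanishing $\theta^{g+1}=0$ via Polishchuk's operator $\mathcal{D}$), and the remark following it cites \cite{POWR} for genus~$3$ and~$4$. So there is nothing in the paper to compare your argument against.

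Your text is not a proof but a research strategy, and you seem aware of this: phrases like ``should show,'' ``I would attempt,'' and ``success here would strongly support'' are appropriate for a plan, not for a demonstration. The strategy you outline --- making $\phi_n^*$ explicit via Yin's algorithm, decomposing $R^*(\CC^n)$ into $S_n$-isotypic pieces, and using multiplication by asymmetric classes together with push-forward from higher $\CC^m$ to break symmetry --- is reasonable and tracks the paper's own discussion of the difficulty. But the genuine obstruction the paper highlights is precisely the genus~$3$, $5$-point, degree~$3$ symmetric relation that is \emph{not} a pullback from $\J_3$. You correctly identify this as the first real test case, yet you offer no mechanism for producing it from $\I_5$; absent that, your proposal has no content beyond restating the problem. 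A proof would need to exhibit, concretely, a relation $R$ on $\J_3$, an integer $m\ge 6$, and a tautological class $\alpha$ on $\CC^m$ such that the push-forward of $\alpha\cdot\phi_m^*(R)$ to $\CC^5$ equals this relation --- or else give a structural argument that such data must exist. Neither is present here.
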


Let $\pi: \CC_g \too \CM_g$ be the universal curve and consider the space $\CC_g^n$ for a natural number $n$.
Notice that there is a natural isomorphism between $\CC^n$ and $\CC_g^{n+1}$.
Under this isomorphism we get the following identification

$$\al_n: R^*(\CC^n) \cong R^*(\CC_g^{n+1}).$$

%Using the classes mentioned above on $\CC^n$ we obtain tautological classes on $\CC_g^{n+1}$ via the following rules:

%$$\psi \too K_{n+1}, \D x_i \too d_{i,n+1} \D 1 \leq i \leq n.$$

%This gives the following map:

%$$\al_n: R^*(\CC^n) \too R^*(\CC_g^{n+1})$$

%\begin{conj}\label{motive}
%The map $\al_n$ is an isomorphism between the tautological rings.
%\end{conj}

Conjecture \ref{Jacobian} does not lead to an effective way to describe tautological relations.
The reason is that for a given genus $g$ and a natural number $n$ one gets infinitely many relations.
However, for a given $m \geq n$ there are only a finite number of motivic relations.
An optimistic speculation is that there is a uniform bound independent of the number of points which only depends on the genus: 

\begin{conj}\label{UC}
For a given genus $g$ there is a natural number $N_g$ such that all tautological relations on $C_g^n$ lie in the image of $\phi_m$ for $m \leq n+N_g$.
\end{conj}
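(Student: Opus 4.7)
The plan is to combine Conjecture \ref{Jacobian} with a finiteness input on the Jacobian side to produce the required uniform bound. Assuming Conjecture \ref{Jacobian}, every tautological relation on $\CC_g^n$ is of the form $\pi_*(\al \cdot \phi_m^* R)$, where $R$ is a tautological relation on $\J_g$, $\al$ is a tautological class on $\CC_g^m$, and $\pi : \CC_g^m \too \CC_g^n$ is a forgetful projection. The content of Conjecture \ref{UC} is that $m - n$ can be bounded by a quantity $N_g$ depending only on $g$.

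First, I would reduce to a finite list of Jacobian relations. By Yin's description, $R^*(\J_g)$ is finitely generated by the classes $p_{i,j}$ with $0 \leq j \leq 2g-2$ together with $\Psi$; conjecturally, every relation on $\J_g$ is obtained from a finite family $R_1, \dots, R_s$ of primitive relations of some bounded degree $d_g$, by applying the $\mathfrak{sl}_2$-action and the Polishchuk differential operator $\mathcal{D}$. It would then be enough to prove a uniform bound for pullbacks of these generators. Since each $p_{i,j}$ pulls back via $\phi_m$ to a symmetric class of controlled complexity in the point divisors, the canonical classes $K_i$, and the diagonals $D_{ij}$, the number of auxiliary markings needed to even write $\phi_m^* R_j$ is already bounded in terms of $g$ and $d_g$.

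Second, I would convert symmetric pullbacks on $\CC_g^m$ into arbitrary relations on $\CC_g^n$. The class $\phi_m^* R_j$ is $S_m$-symmetric, whereas a general tautological relation on $\CC_g^n$ is not; the role of the intersection with $\al$ and the pushforward $\pi_*$ is precisely to break this symmetry. The key claim is that every asymmetric monomial in the standard generators of $R^*(\CC_g^n)$ can be produced as $\pi_*(\al \cdot S)$, where $S$ is a symmetric tautological class on $\CC_g^{n+N_g}$ and $\al$ is supported at the extra $N_g$ markings, with $N_g$ depending only on $g$. One isolates the ``data at the auxiliary points'' --- diagonals, $\psi$-classes, and cotangent-type classes among the extra markings --- from the ``data on $\CC_g^n$'', and checks that only finitely many combinatorial types at the auxiliary points are needed, the number being controlled by the Jacobian generators $p_{i,j}$ with $j \leq 2g-2$.

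Third, one assembles the argument. Given any tautological relation $\rho$ on $\CC_g^n$, apply Conjecture \ref{Jacobian} to write $\rho = \pi_*(\al \cdot \phi_m^* R)$, expand $R$ in the finite generating family using the $\mathfrak{sl}_2$-action and the operator $\mathcal{D}$, and then apply the symmetry-breaking step to reorganize $\al$ on $\CC_g^{n+N_g}$. An explicit candidate for $N_g$ would be roughly $d_g \cdot (2g-1)$, reflecting the bound on $j$ in $p_{i,j}$ and the degree of generating Jacobian relations. The main obstacle will be the symmetry-breaking step: a priori, realizing a highly asymmetric monomial on $\CC_g^n$ as a pushforward of something symmetric on $\CC_g^m$ can force $m - n$ to grow with the complexity of the monomial, hence with $n$. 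Bounding this uniformly requires a stabilization statement for the $R^*(\CC_g)$-module structure of $R^*(\CC_g^n)$, in the spirit of representation stability, and this input --- rather than the Jacobian bookkeeping --- is where one seems to genuinely need something beyond Conjecture \ref{Jacobian}.
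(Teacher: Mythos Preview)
The statement you are trying to prove is labeled a \emph{Conjecture} in the paper, and the paper does not prove it in general. The only case actually established is $g=2$ (Theorem~\ref{genus2}), where the argument is completely explicit: one knows from \cite{tavakol_2} a finite list of generating relations on products of the universal genus-two curve, and one checks by direct computation that each of them is the pull-back via some $\phi_m$ of a specific motivic relation on $\J_2$ (namely $\mathcal{D}^k(p_{3,1}^2)=0$ for suitable $k$, and $p_{2,0}^3=0$). This even gives $N_2=0$. The paper then remarks that \cite{POWR} yields the cases $g=3,4$, with $N_3,N_4>0$. There is no general argument.

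Your proposal, by contrast, is not a proof but a conditional strategy layered on several open conjectures: you assume Conjecture~\ref{Jacobian}, you assume Yin's conjecture that all Jacobian relations are generated from finitely many primitives via the $\mathfrak{sl}_2$-action, and then---as you yourself note in the last paragraph---the symmetry-breaking step still requires an unproved stabilization statement for $R^*(\CC_g^n)$ that is independent of, and apparently stronger than, everything assumed so far. That final step is exactly the content of the conjecture, so the argument is circular in spirit: the bound $N_g$ you propose controls only how many auxiliary points are needed to \emph{write down} $\phi_m^*R_j$, not how many are needed to hit an arbitrary relation on $\CC_g^n$ after intersecting and pushing forward. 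In short, there is no proof here to compare to the paper's, because the paper offers none and your sketch does not close the main gap it identifies.
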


\begin{thm}\label{genus2}
Conjecture \ref{UC} is true for $g=2$.
\end{thm}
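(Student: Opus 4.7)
The plan is to leverage the structural theorem on $R^*(M_{2,n}^{rt})$ proved just above, which identifies a short, explicit list of generators for the ideal of tautological relations on $M_{2,n}^{rt}$: Getzler's relation in genus two, the Belorousski--Pandharipande relation, the degree $3$ relation on $M_{2,6}^{rt}$, and the extra relations from the blow-up geometry of the Fulton--MacPherson compactification $X[n]$. Restricting to the open locus $\CC_2^n$ and using the identification $\al_n: R^*(\CC^n) \cong R^*(\CC_2^{n+1})$, this yields a finite list of generating relations for the ideal of tautological relations in $R^*(\CC_2^n)$, each involving at most a bounded number of points.

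The next step is to exhibit each generator in this list as a pullback from the universal Jacobian $\J_2$ via $\phi_m$, followed by intersection with a tautological class and pushforward. The two degree $2$ relations are already known to arise in Yin's $\mathfrak{sl}_2$ framework from low-complexity relations on $\J_2$ pulled back via $\phi_m$ for small $m$. The degree $3$ relation on six points stems from Faber's vanishing $c_g(\F_{2g-1}-\E)=0$ specialised to $g=2$; to identify its Jacobian origin, I would expand the Chern classes of $\F_3$ and $\E$ in the $p_{i,j}$ basis on $\J_2$ and match the resulting identity, component by component in the Beauville decomposition, to the pullback via $\phi_6$ of a single degree $3$ relation on $\J_2$. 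The blow-up relations encode excess intersection along diagonals and are built from the Poincar\'e class on $\J_2 \times_S \J_2$, hence also lie in the image of $\phi_m^*$.

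Given these identifications, one can take $N_2 = 5$: every generating relation involves at most $6$ points, so any tautological relation on $\CC_2^n$ can be produced by pulling back a Jacobian relation to $\CC^m$ with $m \leq \max(n,6) \leq n+5$, multiplying by a tautological class, and pushing forward to $\CC_2^n$. For $n \geq 6$ the required relation is already available at $m=n$, while for $n < 6$ it suffices to push forward from $\CC^6$. The Gorenstein property of $R^*(M_{2,n}^{rt})$ guarantees that these operations generate the full ideal and not merely a sub-ideal, so Conjecture \ref{UC} follows for $g=2$.

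The main obstacle is the second step, specifically the explicit Jacobian lift of the degree $3$ relation on $\CC^6$. Faber's construction produces it as a polynomial in $K$-, $D$-, $\psi$- and $\kappa$-classes, whereas the candidate Jacobian relation lives naturally in the $p_{i,j}$ variables and must be matched to the pullback via $\phi_6$ only after reducing modulo the lower-order relations already accounted for. Carrying this out requires careful bookkeeping with the differential operator $\mathcal{D}$; in the two-dimensional Jacobian setting this is tractable but not automatic. Once the match is in place, the remaining verifications reduce to routine applications of the $\mathfrak{sl}_2$ action and the Fourier--Mukai formalism on $\J_2$.
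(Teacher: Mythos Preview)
Your overall strategy --- reduce to a finite list of generating relations, then exhibit each one on the Jacobian side --- matches the paper's. But there are two substantive discrepancies and one outright error.

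First, you work with the generators for $R^*(M_{2,n}^{rt})$ (Getzler, Belorousski--Pandharipande, the degree~$3$ relation, and the blow-up relations) and then speak of ``restricting to the open locus $\CC_2^n$.'' The map actually goes the other way: there is a contraction $M_{2,n}^{rt} \to \CC_2^n$, and the blow-up relations are precisely the \emph{extra} relations needed on the Fulton--MacPherson side, not on $\CC_2^n$. The paper instead uses directly the three generating relations on $\CC_2^n$ established in \cite{tavakol_2}: the vanishing of the Faber--Pandharipande cycle, the vanishing of the Gross--Schoen cycle, and the degree~$3$ relation on six points. These are the restrictions of your Getzler and BP relations to the open part, together with the same degree~$3$ relation; the blow-up relations play no role.

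Second, your proposed Jacobian source for the degree~$3$ relation is wrong. Specialising $c_g(\F_{2g-1}-\E)=0$ to $g=2$ gives $c_2(\F_3-\E)=0$, a degree~$2$ relation on $\CC_2^3$, not a degree~$3$ relation on six points. The paper instead pulls back the vanishing $p_{2,0}^3 = (-\theta)^3 = 0$ on $\J_2$ via $\phi_6$ to obtain the degree~$3$ relation. For the lower relations, the paper is also more specific than your plan: the single motivic identity $p_{3,1}^2=0$ on $\J_2$, after applying Polishchuk's operator $\DD$ two or three times and pulling back via $\phi_1$ or $\phi_2$, yields $\kappa_1=0$, the Faber--Pandharipande vanishing, and the Gross--Schoen vanishing.

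Finally, because every generating relation is recovered as a direct pull-back (no pushforward from a larger $\CC^m$ is needed), the paper obtains the sharper conclusion $N_2=0$, whereas your argument only gives $N_2 \leq 5$.
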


\begin{proof}
Here we only sketch the proof since the computations are a bit involved and are done using computer. 
From \cite{tavakol_2} we know that there are 3 basic relations which generate the space of tautological relations on products of the universal curve of genus two.
We need to show that these relations can be obtained from the Jacobian side by the method described above.
It turns out that the motivic relation $p_{3,1}^2=0$ on the Jacobian side gives several interesting relations on the curve side.
One can show that this vanishing gives the relation $\ka_1=0$ on the moduli space $M_2$.
More precisely, after applying the Polishchuk differential operator $\DD$ to this class 3 times we get the degree one relation $\ka_1=0$.
To prove the vanishing of the Faber-Pandharipande cycle we need to consider the class $\DD^2(p_{3,1}^2)$ and compute its pull-back to $\CC$.
The pull-back of the same class $\DD^2(p_{3,1}^2)$ to $\CC^2$ gives the vanishing of the Gross-Schoen cycle.
Finally, the degree 3 relation follows from the vanishing of $p_{2,0}^3$.
\end{proof}

\begin{rem}
The proof of Theorem \ref{genus2} shows a stronger statement than Conjecture \ref{UC}.
It shows that any relation on $\CC_2^n$ is simply the pull-back of a motivic relation from the Jacobian side.
In other words, we find that $N_2=0$.
That is, of course, not expected to be true in general. 
From the results proved in \cite{POWR} we get a proof of Conjecture \ref{UC} in genus 3 and 4. Furthermore, it is shown that $N_3, N_4>0$.
\end{rem}

\subsection{Curves with special linear systems}

The theory of Yin is quite powerful also in the study of special curves.
The simplest example deals with hyperellpitic curves.
In this case we have determined the structure of the tautological ring on the space $H_{g,n}^{rt}$ of hyperelliptic curves with rational tails \cite{tavakol_hyperelliptic}.
The crucial part is to understand the space of relations on products of the universal hyperelliptic curve:

\begin{thm}
The space of tautological relations on products of the universal hyperelliptic curve is generated by the following relations:

\begin{itemize}
\item
The vanishing of the Faber-Pandharipande cycle,

\item
The vanishing of the Gross-Schoen cycle,

\item
A symmetric relation of degree $g+1$ involving $2g+2$ points.
\end{itemize}
\end{thm}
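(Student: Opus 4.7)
The plan is to extend the Jacobian-theoretic strategy used for Theorem \ref{genus2} to the hyperelliptic setting, making essential use of the extra symmetry coming from the hyperelliptic involution. The starting point is to fix a smooth hyperelliptic curve $X$ of genus $g$ and to analyze the tautological ring $R^*(X^n)$ via the system of maps $\phi_n : X^n \to J(X)$ into the Jacobian, as in Yin's framework. The hyperelliptic involution induces an extra endomorphism of $J(X)$ and distinguishes a single $g^1_2$ on $X$, and this produces tautological divisor classes beyond those available in the generic case.

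First, I would identify the Jacobian-side origin of each of the three relations. The vanishing of the Faber--Pandharipande cycle and of the Gross--Schoen cycle should follow from motivic relations on $J(X)$ of the form $\mathcal{D}^k(p_{i,j}p_{k,l}) = 0$ in low bidegree, pulled back via $\phi_2$ and $\phi_3$, in direct analogy with the genus $2$ argument where these relations were obtained from $\mathcal{D}^k(p_{3,1}^2)$. The new ingredient is the symmetric degree $g+1$ relation on $2g+2$ points. Its origin lies in the fact that on a hyperelliptic curve the $2g+2$ Weierstrass points lie in the single linear system $|(g+1) \cdot g^1_2|$, producing a vanishing in a suitable $\mathrm{Sym}^{2g+2}$-symmetrization that has no counterpart on a generic curve.

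The next step is completeness. I would apply Polishchuk's differential operator $\mathcal{D}$ and the $\mathfrak{sl}_2$ action on $A^*(J(X))$ to generate the full space of Jacobian-side relations, and then use representation theory of the symmetric group $S_n$ to decompose $R^*(X^n)$ into isotypic components; this reduces the problem to verifying the claim on finitely many irreducible representations at each degree. Once the fixed-curve case is settled, passage to the moduli space $H_{g,n}^{rt}$ proceeds exactly as in the proof for $M_{2,n}^{rt}$: one studies the reduced fiber of $H_{g,n}^{rt} \to H_g$ over $[X]$, identifies it with a blow-up of $X^n$ along the appropriate diagonals, defines its tautological ring, and shows that the induced relations on $R^*(H_{g,n}^{rt})$ come precisely from the three listed families together with the blow-up geometry.

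The principal obstacle will be the degree $g+1$ relation and its role in completeness. Showing that it holds requires a careful identification of a Jacobian-side relation whose $\phi_{2g+2}^*$-pullback realizes it, and the symmetric structure on $2g+2$ points means the argument must be uniform in $g$ rather than genus-by-genus. Equally delicate is showing that together with the Faber--Pandharipande and Gross--Schoen vanishings, this single family exhausts all higher-degree relations after applying intersections with monomials in $K_i$ and $D_{ij}$ and pushing down to lower products — this uniformity is what distinguishes the hyperelliptic case from the conjecturally much wilder behavior in the generic setting, and controlling it will require precise bounds on the Brauer-algebra-type combinatorics governing the $S_n$-invariants of $R^*(X^n)$.
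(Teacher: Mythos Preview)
Your overall direction---pull relations back from the Jacobian, following the template of Theorem~\ref{genus2}---matches the paper. But you have not identified the correct Jacobian-side inputs, and the ones you propose will not do the job.

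For the Faber--Pandharipande and Gross--Schoen vanishings you propose to use relations of the generic shape $\mathcal{D}^k(p_{i,j}p_{k,l})=0$, as in genus two. This is not enough in higher genus: for a generic curve of genus $g\geq 4$ the Faber--Pandharipande cycle does \emph{not} vanish (Green--Griffiths), so a genus-independent argument based only on the $\mathfrak{sl}_2$-relations available for an arbitrary curve cannot produce these vanishings. The paper instead uses the hyperelliptic-specific fact that the Beauville components $[\mathcal C]_{(j)}$ of the curve class in its Jacobian vanish for all $j>0$; this is what forces the Faber--Pandharipande and Gross--Schoen cycles to zero. Your proposal does not mention this vanishing, and the remark that the hyperelliptic involution gives an ``extra endomorphism'' of $J(X)$ is not the point (it acts as $-1$, which is there for any abelian variety).

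For the degree $g+1$ relation on $2g+2$ points, the paper does not trace it to the configuration of Weierstrass points in $|(g+1)\cdot g^1_2|$ but to the universal relation $\theta^{g+1}=0$ on a $g$-dimensional Jacobian. This is both simpler and uniform in $g$. The paper also notes a technical point you omit: one must work over the moduli of \emph{Weierstrass-pointed} hyperelliptic curves, so that the section needed to define the map to the universal Jacobian is available. Finally, the paper's sketch addresses only that the three listed relations \emph{hold} (and come from the Jacobian); the completeness argument is deferred to the cited article, so the layer of representation-theoretic machinery you outline for completeness is not part of the proof as presented here.
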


\begin{proof}
The idea is very similar to the proof of Theorem \ref{genus2}.
Here we can show that all of these relations follow from known relations on the Jacobian side.
The difference is that we do not consider tautological classes on the Jacobian varieties associated with generic curves.
For technical reasons we need to work over the moduli space of Weierstrass pointed hyperelliptic curves.
Denote by $\J_g$ the universal Jacobian over this moduli space.
The basic relation is that all components $C_{(j)}$ vanish when $j>0$.
Using these vanishings one can show that both classes in the first and second part are zero.
The last relation is shown to follow from the vanishing of the well-known relation $\theta^{g+1}$.
This connection also shows that the question of extending tautological relations to the space of curves of compact types is equivalent to extending the first two classes.
\end{proof}

A more general case of curves with special linear systems can be treated with this method.
Based on a series of relations on such curves we have formulated a conjectural description of the space of tautological relations
for certain classes of trigonal and 4-gonal curves. Details will be discussed in upcoming articles. 

\part*{Cohomology of moduli spaces of curves}

Tautological classes on the moduli spaces of curves are natural algebraic cycles which define cohomology classes via the cycle class map. 
It would be interesting to understand the image and determine to what extent the 
even cohomology groups are generated by tautological classes. 
It is proven by Arbarello and Cornalba \cite{arbarello_cornalba_calculating} that $H^k(\overline{M}_{g,n},\mathbb{Q})$ 
is zero for $k=1,3,5$ and all values of $g$ and $n$
such that these spaces are defined. They also prove that the second cohomology group 
$H^2(\overline{M}_{g,n}, \mathbb{Q})$ is generated 
by tautological classes, modulo explicit relations. 
To state their result let us recall the relevant notations:
Let $0 \leq a \leq g$ be an integer and $S \subset \{1, \dots , n\}$ be a subset such that
$2a-2+|S| \geq 0$ and $2g-2a-2+|S^c| \geq 0$. The divisor class $\delta_{a,S}$ stands 
for the $Q$-class in $\overline{M}_{g,n}$ whose generic point represents a nodal curve consisting 
of two irreducible components of genera $a,g-a$ and markings on the component of genus $a$ are labeled
by the set $S$ while markings on the other component are labeled by the complement $S^c$.
The sum of all classes $\delta_{a,S}$ is denoted by $\delta_{a}$.
In case $g=2a$ the summand $\delta_{a,S}=\delta_{a,S^c}$ occurs only once in this sum. 
The divisor class $\delta_{irr}$ is the $Q$-class of the image of the gluing morphism 
$$\iota: \overline{M}_{g-1, n \cup \{* , \bullet\}} \rightarrow \overline{M}_{g,n}.$$
The result is the following:

\begin{thm}\label{coh}
For any $g$ and $n$ such that $2g-2+n>0$, $H^2(\overline{M}_{g,n})$ is generated by the classes 
$\kappa_1, \psi_1, \dots , \psi_n, \delta_{irr}$, and the classes $\delta_{a,S}$ such that $0 \leq a \leq g$, 
$2a-2+|S| \geq 0$ and $2g-2a-2+|S^c| \geq 0$. The relations among these classes are as follows: 
\begin{itemize}
\item If $g>2$ all relations are generated by those of the form 
\begin{equation}\label{1}\tag{2}
\delta_{a,S}=\delta_{g-a,S^c}.
\end{equation}

\item If $g=2$ all relations are generated by \eqref{1} plus the following one
$$5 \kappa_1=5\sum_{i=1}^n \psi_i + \delta_{irr}-5 \delta_0+7 \delta_1.$$
\item If $g=1$ all relations are generated by \eqref{1} plus the following ones $$\kappa_1=\sum_{i=1}^n \psi_i -\delta_0,$$
$$12 \psi_p=\delta_{irr} +12 \sum_{p \in S, |S| \geq 2} \delta_{0,S}.$$

\item  If $g=0$, all relations are generated by \eqref{1} and the following ones 
$$\kappa_1=\sum (|S|-1) \delta_{0,S},$$
$$\psi_z=\sum_{z \in S; x,y \notin S} \delta_{0,S}, \qquad \delta_{irr}=0.$$

\end{itemize}
\end{thm}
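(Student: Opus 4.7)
The plan is to combine an inductive boundary analysis with Harer's computation of $H^2(M_{g,n})$, running a simultaneous induction on the dimension $3g-3+n$ that also establishes the vanishing $H^1(\M_{g,n},\QQ)=0$. This interlocking is essential because the $H^2$ computation on the compactification relies on $H^1$-vanishing for the boundary strata, which are themselves moduli spaces (or products thereof) of strictly smaller dimension.

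The boundary $\partial \M_{g,n}$ is a normal crossings divisor whose irreducible components are the images of the gluing maps from $\M_{g-1,n+2}$ and from products $\M_{g_1,n_1+1}\m\M_{g_2,n_2+1}$. I would use the Gysin/localization sequence of the pair $(\M_{g,n}, M_{g,n})$ together with the Mayer--Vietoris spectral sequence for the normalization of $\partial\M_{g,n}$; feeding in the inductive vanishing of $H^1$ on each stratum yields an exact sequence
$$\bigoplus_\alpha \QQ\cdot[\Delta_\alpha]\too H^2(\M_{g,n})\too H^2(M_{g,n})\too 0,$$
where $\alpha$ ranges over the boundary components. This step produces $\delta_{irr}$ and the $\delta_{a,S}$ as generators of the kernel of restriction. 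For the quotient $H^2(M_{g,n})$, I would invoke Harer's theorem for $g\ge 3$, which gives a free module on $\ka_1,\p_1,\dots,\p_n$, and treat $g=0,1,2$ by direct computation (Keel's presentation for $g=0$; fibration over the $j$-line and Mumford's analysis for $g=1,2$). Combined with the boundary step, this exhibits the claimed spanning set.

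For the relations, the central input is Mumford's Grothendieck--Riemann--Roch identity applied to the universal curve $\pi\colon\M_{g,n+1}\too\M_{g,n}$, expressing $12\la_1$ as a universal combination of $\ka_1$, the $\p_i$, and boundary classes. In genera $g\le 2$ one eliminates $\la_1$ using special structure of the low-genus Hodge bundle: for $g=2$, Mumford's analysis of $\M_2$ produces the $5\ka_1$ identity; for $g=1$, the isomorphism $\M_{1,1}\cong\PP^1$ plus the Hodge computation gives the $\ka_1$ and $12\p_p$ identities; for $g=0$, Keel's presentation of $H^*(\M_{0,n})$ directly yields $\ka_1=\sum(|S|-1)\delta_{0,S}$, $\p_z=\sum\delta_{0,S}$, and $\delta_{irr}=0$. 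The symmetry $\delta_{a,S}=\delta_{g-a,S^c}$ is tautological. Completeness of the relation list then follows by a dimension count against the ranks produced in the first two steps.

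The main obstacle is making the simultaneous induction rigorous: one needs the odd-degree vanishing for boundary strata in order to conclude that the kernel of $H^2(\M_{g,n})\to H^2(M_{g,n})$ is generated by divisor classes with no ``hidden'' contributions from $H^1$. This forces one to run Deligne's weight spectral sequence (or equivalently the Mayer--Vietoris spectral sequence for the SNC boundary) carefully enough to track how $H^1$ of each stratum vanishes by the inductive hypothesis. That interlocked control of $H^1$ and $H^2$ across all boundary types is the technical heart of the argument.
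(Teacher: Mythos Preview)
The paper is a survey and does not supply its own proof of this theorem; it simply attributes the result to Arbarello and Cornalba \cite{arbarello_cornalba_calculating}. Your proposal is essentially a sketch of their method: the simultaneous induction on $3g-3+n$ establishing $H^1=0$ and computing $H^2$, the Gysin/excision sequence for the pair $(\M_{g,n},M_{g,n})$ together with the Mayer--Vietoris analysis of the normal-crossings boundary, and the appeal to Harer's theorem for $H^2(M_{g,n})$ when $g\ge 3$ with direct low-genus computations otherwise. So your approach matches the original source the paper cites, and there is nothing further to compare.

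One small correction: for $g=1$ you write ``the isomorphism $\M_{1,1}\cong\PP^1$,'' but it is the coarse space $\overline{M}_{1,1}$ that is $\PP^1$; the stack $\overline{\CM}_{1,1}$ is not. This does not affect the rational cohomology computation you need, but be precise when invoking it.
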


In \cite{edidin_codimension} Edidin shows that the fourth cohomology group 
$H^4(\overline{M}_g)$ is tautological for $g \geq 12$.
Polito \cite{polito_fourth} gives a complete description of the fourth tautological group of $\overline{M}_{g,n}$ 
and proves that for $g \geq 8$ it coincides with the cohomology group. 
Deligne shows in \cite{deligne_constantes} that the cohomology group $H^{11,0}(\overline{M}_{1,11})$ is non-zero. 
It is not known whether the cohomology groups $H^k(\overline{M}_{g,n})$ for $k=7,9$ vanish.
The following vanishing result is due to Harer \cite{harer_virtual}:
\begin{thm}
The moduli space $M_{g,n}$ has the homotopy type of a finite cell-complex of dimension $4g-4+n$, $n>0$. 
It follows that 
$$H_k(M_{g,n},\mathbb{Z})=0$$ if $n>0$ and $k > 4g-4+n$ and 
$$H_k(M_g,\mathbb{Q})=0$$ if $k>4g-5$.
\end{thm}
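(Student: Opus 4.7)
The plan is to reduce the statement to a construction of a cocompact $\Gamma_{g,n}$-equivariant spine inside Teichm\"uller space, following Harer's strategy. Recall that the Teichm\"uller space $T_{g,n}$ of a topological surface $S_{g,n}$ with $2g-2+n > 0$ is a contractible real analytic manifold of dimension $6g-6+2n$, on which the mapping class group $\Gamma_{g,n}$ acts properly discontinuously with finite stabilizers, so $M_{g,n} = T_{g,n}/\Gamma_{g,n}$ as an analytic orbifold. Passing to a torsion-free finite-index subgroup $\Gamma' \subset \Gamma_{g,n}$ (for instance via a level structure, as in the Looijenga construction cited earlier) makes $T_{g,n}/\Gamma'$ a smooth manifold finitely covering $M_{g,n}$. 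It therefore suffices to exhibit a closed $\Gamma_{g,n}$-invariant subspace $Y \subset T_{g,n}$ of dimension $4g-4+n$ onto which $T_{g,n}$ equivariantly deformation retracts and for which $Y/\Gamma_{g,n}$ is compact; then $Y/\Gamma_{g,n}$ is a finite CW-model for $M_{g,n}$.

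For the construction of $Y$ when $n > 0$, I would use the Jenkins--Strebel ribbon-graph description of Teichm\"uller space. Given a punctured Riemann surface $(C; x_1, \dots, x_n)$ and a positive residue vector $\epsilon \in \mathbb{R}^n_{>0}$, Strebel's theorem yields a unique meromorphic quadratic differential with double poles of prescribed residues at the $x_i$ whose horizontal foliation decomposes $C$ into $n$ flat cylinders around the punctures. The critical graph is a trivalent metric ribbon graph of genus $g$ with $n$ faces, and this assignment identifies $T_{g,n} \times \mathbb{R}^n_{>0}$ with the geometric realization of an orbi-cell complex of such graphs whose top cells have dimension $6g-6+3n$ by an Euler characteristic count. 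After normalizing the residues and passing to a suitable \emph{non-filling} subcomplex---dually, the complex of arc systems on $S_{g,n}$ that do not decompose the surface entirely into disks---one obtains the spine $Y$ of the required dimension $4g-4+n$. Cocompactness of the $\Gamma_{g,n}$-action on $Y$ rests on Mumford's compactness theorem for the thick part of $M_{g,n}$, which ensures that excluding the cusps at infinity yields a compact quotient. The first statement of the theorem follows, and $H_k(M_{g,n}, \mathbb{Z}) = 0$ for $k > 4g-4+n$ is immediate from the dimension of the CW-model.

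For the statement on $M_g$, I would deduce the rational vanishing from the $n = 1$ case via the universal curve morphism $\pi : M_{g,1} \to M_g$, whose fibers are Riemann surfaces of real dimension $2$. Working over $\mathbb{Q}$ to bypass orbifold subtleties, the Leray spectral sequence
\[ E_2^{p,q} = H^p(M_g, R^q\pi_* \mathbb{Q}) \Longrightarrow H^{p+q}(M_{g,1}, \mathbb{Q}) \]
has $R^q\pi_* \mathbb{Q}$ concentrated in degrees $0 \leq q \leq 2$, with $R^2\pi_* \mathbb{Q} \cong \mathbb{Q}$ since the $\Gamma_g$-action on the top cohomology of a fiber preserves orientation. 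If $k_0$ is the largest degree with $H^{k_0}(M_g, \mathbb{Q}) \neq 0$, then $E_2^{k_0, 2}$ lies at the top-right of the non-vanishing region, so it admits no nontrivial differentials in either direction and survives to $E_\infty$. This forces $H^{k_0+2}(M_{g,1}, \mathbb{Q}) \neq 0$, so by the first part $k_0 + 2 \leq 4g-3$, i.e.\ $k_0 \leq 4g-5$. Universal coefficients then yields the homological version.

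The main obstacle is the construction of the spine $Y$ and the verification of its dimension and cocompactness. The dimension count is the subtle point: the ribbon-graph complex has top dimension $6g-6+3n$, which after fixing residues recovers $\dim T_{g,n} = 6g-6+2n$, and the spine requires shaving off a further $2g-2+n = |\chi(S_{g,n})|$ dimensions by restricting to a carefully chosen subcomplex (of non-filling arcs, or equivalently of certain degenerate ribbon graphs). Cocompactness, in turn, reduces to Mumford's compactness criterion for the thick part of moduli space, reflecting the fact that although $M_{g,n}$ itself is non-compact, the spine by construction avoids the neighborhood of the Deligne--Mumford boundary.
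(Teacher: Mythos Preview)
The paper does not prove this theorem; it is simply attributed to Harer. Your outline for $n>0$ is essentially Harer's own construction via arc systems and Strebel differentials, and is sound at the level of detail you give (the precise identification of the spine and its dimension is indeed the hard part, as you acknowledge).

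Your deduction of the case $n=0$, however, has a gap. You take $k_0$ to be the top degree with $H^{k_0}(M_g,\mathbb{Q})\neq 0$ and claim that $E_2^{k_0,2}$ survives because it sits at a corner. But the outgoing differential $d_2$ lands in $E_2^{k_0+2,1}=H^{k_0+2}(M_g,R^1\pi_*\mathbb{Q})$, and the vanishing of $H^{>k_0}(M_g,\mathbb{Q})$ tells you nothing about cohomology with coefficients in the nontrivial local system $R^1\pi_*\mathbb{Q}$; your corner argument does not rule out the possibility that $E_2^{k_0,2}$ is killed at $E_2$. There are several repairs. The quickest, for $g\ge 2$: the Gysin map $\pi_!\colon H^k(M_{g,1},\mathbb{Q})\to H^{k-2}(M_g,\mathbb{Q})$ is surjective because $\pi_!\bigl(\pi^*\alpha\cdot\psi_1\bigr)=(2g-2)\alpha$, which gives the bound at once. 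Alternatively, Deligne's degeneration theorem for the smooth proper map $\pi$ (applied after passing to a level cover so that source and target are smooth varieties) kills all differentials and rescues your argument verbatim. Closest to Harer's actual proof is to upgrade from $\mathbb{Q}$-cohomology to virtual cohomological dimension: with $k_0=\mathrm{vcd}(\Gamma_g)$ and the Hochschild--Serre spectral sequence of the Birman exact sequence $1\to\pi_1(S_g)\to\Gamma_{g,1}\to\Gamma_g\to 1$, the targets $E_2^{k_0+r,3-r}$ of the outgoing differentials vanish because $k_0+r$ now exceeds the cohomological dimension for \emph{every} coefficient module, and then the corner argument is legitimate.
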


Using the method of counting points over finite fields Bergstr\"om has computed the equivariant Hodge Euler characteristic of the moduli spaces of
$\overline{M}_{2,n}$ for $4 \leq n \leq 7$ \cite{bergstrom_hyperelliptic} and that of $\overline{M}_{3,n}$ for $2 \leq n \leq 5$ \cite{bergstrom_3}.  
In \cite{bergstrom_tommasi_rational} Bergstr\"om and Tommasi determine the rational cohomology of $\overline{M}_4$. 
A complete conjectural description of the cohomology of $\overline{M}_{2,n}$ is given in \cite{faber_geer_genus2_1, faber_geer_genus2_2}.

\subsection{The Euler characteristics of the moduli spaces of curves}
In \cite{harer_zagier_euler} Harer and Zagier computed the orbifold Euler characteristics of $\mathcal{M}_{g,n}$ to be 
$$\chi(\mathcal{M}_{g,n})=(-1)^n \frac{(2g+n-3)!}{2g(2g-2)!} B_{2g}$$
if $g>0$. For $g=0$ one has the identity $\chi(\mathcal{M}_{0,n})=(-1)^{n+1}(n-3)!$.
They also computed the ordinary Euler characteristics of $\mathcal{M}_{g,n}$ for $n=0,1$.
Bini and Harer in \cite{bini_harer_euler} obtain formulas for the Euler characteristics of 
$M_{g,n}$ and $\overline{M}_{g,n}$.
In \cite{gorsky_equivariant} Gorsky gives a formula for the $S_n$-equivariant Euler characteristics of 
the moduli spaces $M_{g,n}$.
In \cite{manin_trees} Manin studies generating functions in algebraic geometry and their relation with sums over trees. 
According to the result of Kontsevich \cite{kontsevich_enumeration} the computation of these generating functions is reduced to the
problem of finding the critical value of an appropriate formal potential.
Among other things, in \cite{manin_trees} Manin calculates the Betti numbers and Euler characteristics of the moduli spaces
$\overline{M}_{0,n}$ by showing that the corresponding generating functions satisfy certain differential equations
and are solutions to certain functional equations. 
In \cite{fulton_macpherson_compactification} Fulton and MacPherson give the computation of the virtual Poincar\'e polynomial of 
$\overline{M}_{0,n}$.
The equivariant version of their calculation is due to Getzler \cite{getzler_zero}, 
which also gives the formula for the open part $M_{0,n}$.
In \cite{getzler_semi, getzler_resolving} Getzler calculates the Euler characteristics of $M_{1,n}$ and $\overline{M}_{1,n}$. 
He gives the equivariant answer in the Grothendieck group of mixed Hodge structures where the natural action of the symmetric group $\Sigma_n$ is considered.

\subsection{The stable rational cohomology of $M_{g,n}$ and Mumford's conjecture}
According to Theorem \ref{stable}, there is an isomorphism between $H^k(M_g, \mathbb{Q})$
and $H^k(M_{g+1},\mathbb{Q})$ for $g \gg k$. This leads to the notion of the stable cohomology of $M_g$.
In low degree this happens very quickly:

\begin{thm}
The following are true:
\begin{enumerate}
\item[(i)] $H^1(M_{g,n}, \mathbb{Q})=0$ for any $g \geq 1$ and any $n$ such that $2g-2+n >0$.
\item[(ii)] $H^2(M_{g,n}, \mathbb{Q})$ is freely generated by $\kappa_1,\psi_1, \dots , \psi_n$ for any 
$g \geq 3$ and any $n$. $H^2(M_{2,n}, \mathbb{Q})$ is freely generated by $\psi_1, \dots , \psi_n$ for any $n$,
while $H^2(M_{1,n}, \mathbb{Q})$ vanishes for all $n$. 
\end{enumerate}
\end{thm}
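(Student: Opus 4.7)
The plan is to deduce both statements from the Arbarello--Cornalba Theorem \ref{coh} and their companion vanishing $H^1(\overline{M}_{g,n},\mathbb{Q})=0$, by means of the Gysin long exact sequence
\[
H^{k-2}(\tilde\partial,\mathbb{Q})(-1) \to H^k(\overline{M}_{g,n},\mathbb{Q}) \to H^k(M_{g,n},\mathbb{Q}) \to H^{k-1}(\tilde\partial,\mathbb{Q})(-1) \to \cdots
\]
associated to the open embedding $M_{g,n} \hookrightarrow \overline{M}_{g,n}$; here $\tilde\partial$ denotes the normalization of the boundary. The map in the leftmost position sends the fundamental class of each irreducible boundary component to its divisor class in $H^2$, so its image is precisely the span of the boundary divisor classes.

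For part (i) I would take $k=1$. The leftmost term vanishes, and $H^1(\overline{M}_{g,n},\mathbb{Q})=0$ by Arbarello--Cornalba, so $H^1(M_{g,n},\mathbb{Q})$ embeds into the kernel of the Gysin map $H^0(\tilde\partial,\mathbb{Q})\to H^2(\overline{M}_{g,n},\mathbb{Q})$. Inspection of Theorem \ref{coh} shows that, for every $g\ge 1$, the only relations among the listed generators of $H^2(\overline{M}_{g,n},\mathbb{Q})$ beyond the identifications $\delta_{a,S}=\delta_{g-a,S^c}$ involve $\kappa_1$ or some $\psi_i$ nontrivially, so after choosing one representative of each pair $\{\delta_{a,S},\delta_{g-a,S^c}\}$ the boundary divisor classes are $\mathbb{Q}$-linearly independent. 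The kernel in question is therefore trivial and $H^1(M_{g,n},\mathbb{Q})=0$.

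For part (ii) I would take $k=2$. Each irreducible boundary component of $\overline{M}_{g,n}$ is a finite quotient of $\overline{M}_{g_1,n_1+1}\times\overline{M}_{g_2,n_2+1}$ or of $\overline{M}_{g-1,n+2}$, and the K\"unneth formula together with the Arbarello--Cornalba vanishing $H^1(\overline{M}_{g',n'},\mathbb{Q})=0$ on each smaller factor yields $H^1(\tilde\partial,\mathbb{Q})=0$. The Gysin sequence then produces the presentation
\[
H^2(M_{g,n},\mathbb{Q}) \cong H^2(\overline{M}_{g,n},\mathbb{Q})/\langle \delta_{irr},\delta_{a,S}\rangle,
\]
and applying Theorem \ref{coh} while setting every boundary class to zero yields the claim. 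For $g\ge 3$ no relation remains and $\kappa_1,\psi_1,\dots,\psi_n$ form a free basis. For $g=2$ the extra relation of Theorem \ref{coh} collapses to $5\kappa_1 = 5\sum_i\psi_i$, so $\kappa_1$ becomes redundant and $\psi_1,\dots,\psi_n$ are free. For $g=1$ the two extra relations become $\kappa_1=\sum_i\psi_i$ and $12\psi_p=0$ for each $p$, which force every $\psi_p$ and $\kappa_1$ to vanish, so $H^2(M_{1,n},\mathbb{Q})=0$.

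The main technical subtlety is the precise form of the Gysin sequence I have invoked: strictly speaking, one should derive it from Deligne's weight spectral sequence for the complement of a normal crossings divisor and verify that contributions from higher-codimension intersections of boundary strata do not interfere in the low-degree computation. Since those contributions reduce once more to $H^1$ of smaller $\overline{M}_{g',n'}$, this is bookkeeping controlled by the same Arbarello--Cornalba vanishing theorem, and it constitutes the only point of substance beyond direct invocation of Theorem \ref{coh}.
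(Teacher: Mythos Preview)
The paper does not give its own proof of this theorem; immediately after the statement it simply attributes part (i) to Mumford (for $n=0$) and Harer (for general $n$), and part (ii) to Harer. So there is no argument in the paper to compare yours against, only a citation.

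Your deduction from Theorem~\ref{coh} is formally coherent, but it is circular in the broader mathematical sense and therefore not an independent proof. Arbarello and Cornalba's proof of Theorem~\ref{coh}, and of the companion vanishing $H^1(\overline{M}_{g,n},\mathbb{Q})=0$, explicitly takes Harer's computation of $H^2$ of the mapping class group and his stability theorem as input: they run essentially the same Gysin/excision sequences you are using, but in the opposite direction, feeding in the known cohomology of the open part $M_{g,n}$ to obtain the description of $H^2(\overline{M}_{g,n},\mathbb{Q})$. So invoking Theorem~\ref{coh} to recover the present statement is using Harer's theorem to prove Harer's theorem. Harer's original arguments are topological, based on the action of the mapping class group on curve complexes, and are logically prior to the algebro-geometric description of $H^2(\overline{M}_{g,n},\mathbb{Q})$.

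A smaller technical point: in your final paragraph you say the higher-codimension contributions to the weight spectral sequence ``reduce once more to $H^1$ of smaller $\overline{M}_{g',n'}$''. That covers the weight-$3$ piece $E_1^{-1,3}=H^1(\tilde D^{(1)})(-1)$, but the weight-$4$ piece is $E_1^{-2,4}=H^0(\tilde D^{(2)})(-2)$, and its vanishing at $E_2$ is the injectivity of the Gysin map $H^0(\tilde D^{(2)})\to H^2(\tilde D^{(1)})$. That injectivity is again a consequence of Theorem~\ref{coh} applied on each boundary divisor (linear independence of boundary classes there), not of any $H^1$-vanishing. This does not change the outcome, but your description of what the bookkeeping involves is not quite accurate.
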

The result on $H^1$ is due to Mumford for $n=0$ \cite{mumford_abelian} and to Harer \cite{harer_stability} for arbitrary $n$, 
the one on $H^2$ is due to Harer \cite{harer_second}.
The general form is Harer's stability theorem, which was first proven by Harer in \cite{harer_stability} 
and then improved by Ivanov \cite{ivanov_stabilization, ivanov_homology} and Boldsen \cite{boldsen_improved}.
\begin{thm} \label{stable}
(Stability Theorem). For $k< \frac{g-1}{2}$, there are isomorphisms
$$H^k(M_g, \mathbb{Q}) \cong H^k(M_{g+1}, \mathbb{Q}) \cong H^k(M_{g+2}, \mathbb{Q}) \cong \dots .$$
\end{thm}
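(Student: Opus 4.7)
The plan is to translate the statement about $M_g$ into a statement about mapping class groups and then appeal to the methods of geometric group theory. Since the coarse moduli space $M_g$ is rationally a $K(\Gamma_g, 1)$, where $\Gamma_g$ denotes the mapping class group of a closed oriented surface $\Sigma_g$ of genus $g$, one has a canonical isomorphism $H^*(M_g, \QQ) \cong H^*(\Gamma_g, \QQ)$. Hence the theorem reduces to showing that a suitable stabilization map $\Gamma_g \to \Gamma_{g+1}$ induces an isomorphism on rational cohomology for all degrees $k < (g-1)/2$.

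To construct and analyze such a stabilization map, I would pass to mapping class groups of surfaces with boundary. Let $\Gamma_{g,b}$ be the mapping class group of a surface $\Sigma_{g,b}$ of genus $g$ with $b$ boundary components, each fixed pointwise. Gluing a pair of pants along two boundary components gives a homomorphism $\Gamma_{g,b} \to \Gamma_{g+1,b-2}$, and capping off a boundary by a disk gives $\Gamma_{g,b} \to \Gamma_{g,b-1}$. Composing such maps (using $\Gamma_{g,1}$ as an auxiliary stepping stone) produces the genus stabilization $\Gamma_g \to \Gamma_{g+1}$ in the closed case. The goal is then to show that the induced map on homology is an isomorphism through the claimed range, for which it is more convenient to prove stability for the whole family $\{\Gamma_{g,b}\}$ simultaneously.

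The central tool is the action of $\Gamma_{g,b}$ on a simplicial complex $X_{g,b}$ built from isotopy classes of disjoint non-separating arcs with endpoints on the boundary (or a closely related curve complex). The key technical input is Harer's connectivity theorem: $X_{g,b}$ is roughly $\lfloor (g-2)/2 \rfloor$-connected. Granted this, the action gives rise to an equivariant spectral sequence whose $E^1$-page is expressed in terms of the homology of stabilizers of simplices, and these stabilizers are themselves mapping class groups of surfaces of strictly lower topological complexity. A double induction on genus and degree, combined with the Five Lemma applied to the long exact sequences arising from the filtration, yields the isomorphism in the stated range.

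The main obstacle is proving the connectivity bound for the arc complex. This is a delicate piece of geometric topology: given a simplicial map from a sphere into $X_{g,b}$, one must produce a nullhomotopy by a surgery procedure that iteratively reduces intersection numbers between arcs, and one must verify that this process terminates with the sphere bounded by a disk. Harer's original approach used Morse theory on intersection functions on Teichmüller-like parameter spaces; Ivanov's and Boldsen's later improvements sharpened the connectivity range, which is exactly what translates into the precise stable slope $k < (g-1)/2$. Once the connectivity estimate is in hand, the remaining spectral sequence comparison is comparatively routine but requires careful bookkeeping to keep track of boundary components, degree shifts, and the inductive hypotheses.
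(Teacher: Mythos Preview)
The paper does not actually prove this theorem: it is stated as Harer's stability theorem and simply attributed to the references \cite{harer_stability}, with the improved range credited to Ivanov and Boldsen. So there is no ``paper's own proof'' to compare against.

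That said, your outline is an accurate summary of the strategy used in those references: identify $H^*(M_g,\QQ)$ with $H^*(\Gamma_g,\QQ)$, pass to surfaces with boundary to get well-defined stabilization maps, act on a highly connected arc/curve complex, and run a spectral sequence plus double induction on genus and degree. The key technical ingredient you correctly isolate is the connectivity estimate for the arc complex, and you are right that the specific slope $k<(g-1)/2$ reflects the sharpened connectivity bounds of Ivanov and Boldsen rather than Harer's original range. As a proof sketch this is fine; as a full proof it would of course require actually carrying out the surgery/connectivity argument, which is the real work.
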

According to this theorem, it makes sense to say that $k$ is in the stable range if $k<(g-1)/2$. 
In \cite{harer_improved} Harer extends the range to $k<\frac{2}{3}(g-1)$. 
The following result is due to Miller \cite{miller_homology} and Morita \cite{morita_characteristic}:

\begin{thm}
Let $k$ be in the stable range. Then  the natural homomorphism 
$$\mathbb{Q}[x_1, x_2, \dots] \rightarrow H^*(M_g,\mathbb{Q})$$
sending $x_i$ to $\kappa_i$ for $i=1,2, \dots$ is injective up to degree $2k$.
\end{thm}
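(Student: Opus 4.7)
The plan is to detect every nontrivial polynomial in the $\kappa_i$ by evaluating it on a well-chosen smooth oriented family of curves. The key idea is that, in the stable range imposed by Theorem \ref{stable}, one can build test bundles $\pi_\la\colon E_\la\to B_\la$ over products of even-dimensional spheres, one for each partition $\la$ of the relevant weight, so that the pairing matrix with monomials in the $\kappa_i$ is diagonal with nonzero diagonal entries.

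Specifically, for each $i\ge 1$ I would first produce a smooth oriented surface bundle $\pi_i\colon E_i\to S^{2i}$ of some genus $h_i$ with $\int_{S^{2i}}\kappa_i(E_i/S^{2i})\ne 0$. The existence of such a bundle is due to Miller, obtained from a plumbing/iterated surgery construction; alternatively one can exhibit a smooth family of curves over $\PP^i$ with the required nonvanishing by applying Grothendieck-Riemann-Roch (as in the discussion following Corollary \ref{lam}) and then restrict to a linearly embedded $\PP^1\subset\PP^i$. Because $H^\ast(S^{2i},\QQ)$ is concentrated in degrees $0$ and $2i$, one automatically has $\kappa_j(E_i/S^{2i})=0$ for every $j>0$ with $j\ne i$, which is the crucial vanishing. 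For a partition $\la=(i_1,\dots,i_r)$ with $d(\la):=\sum_s i_s\le k$, I then set $B_\la:=S^{2i_1}\times\cdots\times S^{2i_r}$ and define $E_\la\to B_\la$ as the fiberwise connected sum of the pull-backs of the individual $E_{i_s}$ from each factor. This forms a bundle of genus $g=\sum h_{i_s}$, which can be taken as large as needed in the stable range of Theorem \ref{stable} by trivial stabilization.

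The behavior of the $\kappa$-classes under fiberwise connected sum is additive in the stable range: one has $\kappa_j(E_\la/B_\la)=\sum_{s=1}^r p_s^\ast\,\kappa_j(E_{i_s}/B_{i_s})$, where $p_s\colon B_\la\to B_{i_s}$ is the projection to the $s$-th factor. Together with the vanishings above, this makes $\kappa_j(E_\la/B_\la)$ equal to a sum over those indices $s$ with $i_s=j$ of nonzero multiples of pulled-back top classes. A short K\"unneth computation, using $H^\ast(S^{2i})^{\otimes r}$ and the fact that each $(p_s^\ast x_s)^2=0$, then shows that the pairing matrix
\[
M_{\la,\mu}:=\int_{B_\la}\prod_t \kappa_{\mu_t}(E_\la/B_\la),
\]
indexed by partitions of a common weight $d\le k$, is nonzero precisely when $\mu=\la$ as multi-sets. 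Hence $M$ is diagonal with nonzero diagonal, yielding linear independence of the monomials $\prod_t\kappa_{\mu_t}$ in $H^{2d}(M_g,\QQ)$ for each $d\le k$, and thereby the asserted injectivity in degrees $\le 2k$.

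The main obstacle is the additivity of $\kappa$-classes under fiberwise connected sum. The connecting tube contributes an \emph{a priori} nontrivial correction, supported on the circles along which the two fibers are glued, entering through the relative cotangent sheaf of the resulting family. One must verify that in the stable range these correction terms either vanish or have strictly lower cohomological filtration on $B_\la$ than the leading term, and hence do not affect the top-degree pairing. This is precisely where Harer stability plays an essential role and represents the technical core of the Miller-Morita argument.
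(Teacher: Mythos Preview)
The paper does not prove this theorem; it simply attributes it to Miller \cite{miller_homology} and Morita \cite{morita_characteristic}. Your outline is essentially Miller's argument (with elements of Morita's): construct surface bundles over $S^{2i}$ detecting each $\kappa_i$, assemble them by fiberwise connected sum over products of spheres, and verify that the resulting pairing matrix against $\kappa$-monomials is diagonal with nonzero entries. So as a strategy there is nothing to compare against in the paper, and your plan is the standard one.

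The outline is correct, but your closing paragraph conflates two distinct issues. The identity $\kappa_j(E_1\#_f E_2/B)=\kappa_j(E_1/B)+\kappa_j(E_2/B)$ holds \emph{exactly}, with no lower-order correction: once the gluing is performed along sections with trivial normal bundle (which can always be arranged, possibly after a preliminary stabilization), the vertical Euler class vanishes on the cylindrical gluing region, so $e^{j+1}$ is supported on the two original pieces and the fiber integral splits as a sum. This is Morita's computation and does not invoke Harer stability. Where Harer stability genuinely enters is one step earlier: your test bundles $E_\la\to B_\la$ have some large fiber genus $g'=\sum_s h_{i_s}$, and it is the stability isomorphism $H^{2d}(M_g,\QQ)\cong H^{2d}(M_{g'},\QQ)$ for $2d$ in the stable range that transports the nonvanishing you detect back to the fixed $g$ in the statement. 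Without it you would only have proved the result for all sufficiently large genus, not for the given $g$.
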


Madsen and Weiss in \cite{madsen_weiss_stable} prove the Mumford's conjecture. More precisely, they show that:
\begin{thm} 
(Mumford's Conjecture). Let $k$ be in the stable range. Then the natural homomorphism 
$$\mathbb{Q}[x_1, x_2, \dots] \rightarrow H^*(M_g,\mathbb{Q})$$
sending $x_i$ to $\kappa_i$ for $i=1,2, \dots$ is an isomorphism up to degree $2k$.
\end{thm}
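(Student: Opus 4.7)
The plan is to follow the Madsen--Weiss strategy, which bounds the stable rational cohomology of $M_g$ from above by identifying it with that of a specific infinite loop space. By the Stability Theorem (which we have already cited), in the range $* < (g-1)/2$ the cohomology $H^*(M_g; \mathbb{Q})$ agrees with the rational cohomology of the classifying space $B\Gamma_\infty$ of the stable mapping class group. The Miller--Morita theorem already provides the injection $\mathbb{Q}[x_1, x_2, \ldots] \hookrightarrow H^*(B\Gamma_\infty; \mathbb{Q})$ sending $x_i \mapsto \kappa_i$, so the task reduces to \emph{surjectivity}: one must show there are no further stable cohomology classes beyond polynomials in the Mumford--Morita--Miller classes.

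The key construction is a Pontryagin--Thom map out of the stable mapping class group. Applying the Pontryagin--Thom collapse fibrewise to the universal surface bundle $\pi : \mathcal{E} \to B\mathrm{Diff}^+(\Sigma_g)$ (embedding surfaces in $\mathbb{R}^{N+2}$ and collapsing the complement of a tubular neighbourhood of the vertical tangent bundle), then letting $g, N \to \infty$ and applying Quillen's plus construction, one obtains a map
$$\alpha_\infty : \mathbb{Z} \times B\Gamma_\infty^+ \longrightarrow \Omega^\infty \mathbb{CP}^\infty_{-1},$$
where $\mathbb{CP}^\infty_{-1}$ denotes the Thom spectrum of the virtual bundle $-L$ over $\mathbb{CP}^\infty$, with $L$ the tautological line bundle. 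The Mumford--Morita--Miller classes are exactly the classes that arise by pulling back the generators of $H^*(\Omega^\infty \mathbb{CP}^\infty_{-1}; \mathbb{Q})$ along $\alpha_\infty$, because the fibre integral $\pi_*(e^{i+1})$ of powers of the vertical Euler class corresponds, under Pontryagin--Thom, precisely to the $2i$-dimensional generator of the Thom spectrum.

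The deepest step, and by far the main obstacle, is showing that $\alpha_\infty$ is a weak equivalence. The approach is to realise $\Omega^\infty \mathbb{CP}^\infty_{-1}$ as (a component of) the loop space of the classifying space of the $2$-dimensional cobordism category $\mathcal{C}_2$, via the Galatius--Madsen--Tillmann--Weiss identification $B\mathcal{C}_d \simeq \Omega^{\infty-1}\mathrm{MT}SO(d)$; then a McDuff--Segal group completion argument, crucially fed by Harer stability, identifies $\Omega_0 B\mathcal{C}_2$ with $B\Gamma_\infty^+$. The cobordism-category identification in dimension two requires a delicate sheaf-theoretic model of families of surfaces and is the substantive new homotopy-theoretic content; Harer stability is the essential geometric input that ensures the group completion produces the stable mapping class group rather than a strictly larger monoidal completion.

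Granted that $\alpha_\infty$ is a weak equivalence, the remainder is a standard infinite loop space calculation. For a simply connected spectrum $X$, Serre's theorem (via rational homotopy theory) gives that $H^*(\Omega^\infty_0 X; \mathbb{Q})$ is the free graded-commutative algebra on $H^{>0}(X; \mathbb{Q})$. The Thom isomorphism yields $H^*(\mathbb{CP}^\infty_{-1}; \mathbb{Q}) \cong H^{*+2}(\mathbb{CP}^\infty; \mathbb{Q})$, which has exactly one generator in each even degree $2i$ with $i \geq 1$ and nothing in odd degrees. Hence $H^*(\Omega^\infty_0 \mathbb{CP}^\infty_{-1}; \mathbb{Q})$ is the polynomial ring on classes $\tilde{\kappa}_i$ in degree $2i$, and $\alpha_\infty^*(\tilde{\kappa}_i) = \kappa_i$ by the identification above. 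Combining this with Harer stability transports the isomorphism back to $H^{\leq 2k}(M_g; \mathbb{Q})$ for $k < (g-1)/2$, completing the proof.
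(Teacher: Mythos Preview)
The paper is a survey and does not supply its own proof of this theorem: it merely attributes the result to Madsen--Weiss and notes related work of Segal, Tillmann, and Madsen--Tillmann. So there is no ``paper's proof'' to compare against in the sense you might expect.

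Your outline is a faithful sketch of the Madsen--Weiss strategy and is essentially correct. Two small remarks. First, you have blended the original Madsen--Weiss argument with the later Galatius--Madsen--Tillmann--Weiss cobordism-category approach; the original proof that $\alpha_\infty$ is a weak equivalence proceeded via sheaves of stratified surfaces and an $h$-principle/Vassiliev-type argument rather than through $B\mathcal{C}_2$, though the cobordism-category route you describe is now the standard one and is perfectly valid. Second, the spectrum $\mathbb{CP}^\infty_{-1}$ is not simply connected (it has rational homotopy in degrees $-2$ and $0$), so your appeal to ``Serre's theorem for simply connected spectra'' is phrased a bit loosely; the correct statement is that for any spectrum $X$ the rational cohomology of the basepoint component $\Omega^\infty_0 X$ is the free graded-commutative algebra on the positive-degree rational cohomology of $X$, which follows from the rational splitting of spectra into Eilenberg--Mac Lane summands. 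With that adjustment the computation you give is right and the conclusion follows.
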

There are other proofs by Segal \cite{segal_definition}, Tillmann \cite{tillmann_homotopy}, Madsen and Tillmann \cite{madsen_tillmann_stable}.
The case of pointed curves is studied by Looijenga \cite{looijenga_stable}:

\begin{thm}
Let $k$ be in the stable range. Then the natural homomorphism
$$H^*(M_g,\mathbb{Q})[y_1, \dots, y_n] \rightarrow H^*(M_{g,n},\mathbb{Q})$$
sending $y_i$ to $\psi_i$ for $i=1, \dots , n$ is an isomorphism up to degree $2k$.
\end{thm}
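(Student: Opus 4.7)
The plan is to proceed by induction on $n$, with the base case $n=0$ being precisely the Madsen--Weiss theorem (Mumford's conjecture) already cited in the preceding paragraph. For the inductive step, the natural object to study is the forgetful morphism $\pi: M_{g,n} \to M_{g,n-1}$ that drops the last marking. Identifying $M_{g,n}$ with the complement of the $n-1$ tautological sections $\sigma_1, \dots, \sigma_{n-1}$ inside the universal curve $\mathcal{C} \to M_{g,n-1}$, we have a closed oriented $\Sigma_g$-bundle $\mathcal{C} \to M_{g,n-1}$ together with a divisor $D = \bigsqcup \sigma_i(M_{g,n-1})$ whose complement is $M_{g,n}$. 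Throughout the argument, by ``the stable range'' we mean degrees bounded by Harer's bound $k<(g-1)/2$ (or the improved $k<\frac{2}{3}(g-1)$).

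First, I would compute $H^*(\mathcal{C};\mathbb{Q})$ in the stable range using the Leray--Serre spectral sequence of $\mathcal{C} \to M_{g,n-1}$. Its $E_2$-page is $H^p(M_{g,n-1}; H^q(\Sigma_g;\mathbb{Q}))$. The rows $q=0$ and $q=2$ are untwisted local systems and contribute $H^*(M_{g,n-1})$ in two copies; the row $q=1$ is the rank-$2g$ symplectic local system $V = R^1\pi_*\mathbb{Q}$. The crucial input is the stable vanishing theorem of Ivanov (with further clarifications by Looijenga and by Kawazumi--Morita), which says that $H^p(M_g; V_\lambda) = 0$ in the stable range for every nontrivial irreducible symplectic local system $V_\lambda$; combined with the inductive hypothesis and the fact that the mapping class group acts through $\mathrm{Sp}(2g,\mathbb{Z})$, this kills the $q=1$ row stably. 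The spectral sequence then degenerates, and the multiplicative extension is controlled by the Euler class of the vertical tangent bundle, which equals $-\psi_n$. One therefore gets $H^*(\mathcal{C};\mathbb{Q}) \cong H^*(M_{g,n-1};\mathbb{Q})[\psi_n]$ in the stable range, with no relation appearing below degree $2g$.

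Second, I would pass from $\mathcal{C}$ to the open subscheme $M_{g,n} = \mathcal{C} \setminus D$ by the Gysin long exact sequence
\[
\cdots \to H^{*-2}(D;\mathbb{Q}) \xrightarrow{\ i_*\ } H^*(\mathcal{C};\mathbb{Q}) \to H^*(M_{g,n};\mathbb{Q}) \to H^{*-1}(D;\mathbb{Q}) \to \cdots,
\]
where $H^*(D;\mathbb{Q}) = \bigoplus_{i=1}^{n-1} H^*(M_{g,n-1};\mathbb{Q})$. The Gysin image of the fundamental class of $\sigma_i(M_{g,n-1})$ in $H^2(\mathcal{C};\mathbb{Q})$ is, by a standard adjunction computation, the class $\psi_n - \delta_{i,n}$ in the extended tautological notation; inverting this class (or rather, its sum of pullbacks) is precisely what localizes $H^*(\mathcal{C})$ to $H^*(M_{g,n})$. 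Checking in the stable range that the map $i_*$ is injective and that the sequence splits as $H^*(\mathcal{C};\mathbb{Q})$-modules yields $H^*(M_{g,n};\mathbb{Q}) \cong H^*(M_{g,n-1};\mathbb{Q})[\psi_n]$. Combining with the inductive hypothesis $H^*(M_{g,n-1};\mathbb{Q}) \cong H^*(M_g;\mathbb{Q})[y_1,\dots,y_{n-1}]$ finishes the induction, since $\psi_i$ for $i<n$ pulls back from $M_{g,n-1}$ on the open stratum.

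The main obstacle is the second step: verifying that the Gysin sequence contributes no extra relations in the stable range, and that the new polynomial generator is indeed $\psi_n$ and not some correction by boundary classes. The stable vanishing for twisted coefficients has to be combined carefully with the ring structure produced by the extension problem in the Leray spectral sequence, and the degree bookkeeping has to respect Harer's stable range uniformly in $n$. Once the stable vanishing of $H^p(M_g;V_\lambda)$ is taken as given (together with Madsen--Weiss for the base case), the rest is a structural argument about two spectral sequences and a Gysin sequence; no new input specific to the moduli problem is needed.
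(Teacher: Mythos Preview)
The paper does not prove this theorem; it is quoted as a result of Looijenga. Your inductive strategy via the Leray spectral sequence for the universal curve and the Gysin sequence for the sections is the natural framework and is close in spirit to Looijenga's actual argument, but step~1 contains a genuine error.

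You assert that $H^p(M_g;V_\lambda)=0$ in the stable range for every nontrivial irreducible symplectic local system, and use this to kill the $q=1$ row and conclude $H^*(\mathcal{C})\cong H^*(M_{g,n-1})[\psi_n]$. Both the hypothesis and the conclusion fail. Take $n=1$, so that $\mathcal{C}=M_{g,1}$ and there are no sections to remove. Since the fiber $\Sigma_g$ has cohomology only in degrees $0,1,2$, the Leray filtration on $H^{2k}(M_{g,1})$ has at most three nonzero graded pieces $E_\infty^{2k,0}$, $E_\infty^{2k-1,1}$, $E_\infty^{2k-2,2}$. If the $q=1$ row vanished you would obtain $\dim H^{2k}(M_{g,1})=\dim H^{2k}(M_g)+\dim H^{2k-2}(M_g)$, which is strictly smaller than $\dim\bigl(\mathbb{Q}[\kappa_i,\psi_1]\bigr)_{2k}=\sum_{j=0}^{k}\dim H^{2k-2j}(M_g)$ as soon as $k\ge 2$; in other words you would get $H^*(M_g)[\psi_1]/(\psi_1^2-\cdots)$, a rank-two module over $H^*(M_g)$, not a polynomial ring. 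In fact the spectral sequence degenerates at $E_2$ and it is precisely the nonvanishing of $H^{2k-1}_{\mathrm{stable}}(M_g;V)$ that supplies the higher powers of $\psi_1$ in the associated graded. Looijenga does not attempt to make the $q=1$ row disappear; he computes $H^*_{\mathrm{stable}}(\Gamma_g;V_\lambda)$ explicitly for all $\lambda$ as free modules over $H^*_{\mathrm{stable}}(\Gamma_g)$ and reads the theorem off from that computation. Replacing your vanishing claim by this twisted-coefficient input is exactly the substantive content of his paper.
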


Our main reference for this part is Chapter 5 of \cite{geometry_of_algebraic_curves_II}. For more details about stable cohomology 
and Mumford's conjecture see \cite{ekedahl_mumford,tillmann_stable,eliashber_galatius_mishachev_mw}.

\subsection{Non-tautological classes} There are non-tautological classes on the moduli spaces of curves. 
This phenomenon starts from genus one. For instance, there are many algebraic zero cycles on 
$\overline{M}_{1,11}$ 
while the tautological group 
$R_0(\overline{M}_{1,11})$ is a one dimensional vector space over $\mathbb{Q}$. 
The infinite dimensionality of the Chow group $A_0(\overline{M}_{1,11})$ follows from the existence of a holomorphic 11-form on the space. 
For more details see 
\cite{faber_pandharipande_taut_non_taut, graber_pandharipande_constructions,graber_vakil_tautological}. 
Notice that this argument does not produce an explicit algebraic cycle which is not tautological.  
Several examples of non-tautological classes on the moduli spaces of curves are constructed in \cite{graber_pandharipande_constructions}. 
To explain their result let us recall the relevant notation. The image of the tautological ring 
$$R^*(\overline{M}_{g,n}) \subset A^*(\overline{M}_{g,n})$$ 
under the cycle class map 
$$A^*(\overline{M}_{g,n}) \rightarrow H^*(\overline{M}_{g,n})$$
is denoted by $RH^*(\overline{M}_{g,n})$. 
In genus zero, the equality 
$$A^*(\overline{M}_{0,n})=R^*(\overline{M}_{0,n})=RH^*(\overline{M}_{0,n})=H^*(\overline{M}_{0,n})$$
holds due to Keel \cite{keel_intersection}. 
In genus one, in \cite{getzler_intersection} Getzler predicted the isomorphisms 
$$R^*(\overline{M}_{1,n}) \cong RH^*(\overline{M}_{1,n}),$$
$$RH^*(\overline{M}_{1,n}) \cong H^{2*}(\overline{M}_{1,n}),$$
for $n \geq 1$. The proof of these statements are given in \cite{petersen_1}
The following proposition is important in finding non-tautological classes:

\begin{prop}\label{rh}
Let $\iota: \overline{M}_{g_1,n_1 \cup \{*\}} \times \overline{M}_{g_2,n_2 \cup \{  \bullet \}} \rightarrow \overline{M}_{g,n_1+n_2}$ 
be the gluing map to a boundary divisor. 
If $\gamma \in RH^*(\overline{M}_{g_1+g_2,n_1+n_2})$, then $\iota^*(\gamma)$ has a tautological
K\"unneth decomposition: $$\iota^*(\gamma) \in RH^*(\overline{M}_{g_1, n_1 \cup \{ * \}} ) \otimes RH^*(\overline{M}_{g_1, n_1 \cup \{ \bullet \}} ).$$
\end{prop}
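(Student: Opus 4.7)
The plan is to reduce to the explicit additive generators of the tautological ring provided by Theorem \ref{maude}, and then analyze the pullback via proper base change along the boundary gluing map.

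First, I would observe that by Theorem \ref{maude} it suffices to verify the statement for a single generator, i.e.\ for $\gamma = \xi_{A*}\bigl(\prod_{v \in V(A)} \theta_v\bigr)$, where $A$ is a stable graph of genus $g_1+g_2$ with $n_1+n_2$ legs and $\theta_v$ is a monomial in $\psi$- and $\kappa$-classes on $\overline{M}_{g(v),n(v)}$. The claim is then inherited by arbitrary $\mathbb{Q}$-linear combinations, and also passes to the image in cohomology since pullback commutes with the cycle class map.

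Next, I would compute $\iota^* \xi_{A*}(\prod \theta_v)$ via the cartesian diagram and the excess intersection formula. The fiber product
\[
\overline{M}_A \times_{\overline{M}_{g,n_1+n_2}} \bigl(\overline{M}_{g_1,n_1 \cup \{*\}} \times \overline{M}_{g_2,n_2 \cup \{\bullet\}}\bigr)
\]
decomposes into components indexed by combinatorial data: either a distinguished edge of $A$ is identified with the node of $\iota$, in which case the component is a product of boundary strata in the two factors (transverse case), or the map $\xi_A$ factors through $\iota$, in which case one gets an excess contribution. In the transverse case, the restriction of $\prod \theta_v$ splits as a product of monomials in $\psi$- and $\kappa$-classes on each side, and is therefore visibly a class in $RH^*(\overline{M}_{g_1,n_1\cup\{*\}}) \otimes RH^*(\overline{M}_{g_2,n_2\cup\{\bullet\}})$. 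In the excess case the extra factor is a power of $c_1(N_\iota) = -\psi_* - \psi_\bullet$, which is again a sum of terms in the tensor product of the two tautological rings.

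The main obstacle is the careful bookkeeping of the fiber product: one must enumerate all ways a stable graph $A$ can specialize so as to produce the boundary edge cut by $\iota$, track the combinatorics of multi-edge contributions and automorphism factors, and verify that at each component the monomial $\prod \theta_v$ restricts to a class of K\"unneth type. Once this is organized, every term of $\iota^*\gamma$ is manifestly a pushforward from a product of boundary strata of a product of tautological $\psi,\kappa$-monomials on each factor, so it lies in $RH^*(\overline{M}_{g_1,n_1\cup\{*\}}) \otimes RH^*(\overline{M}_{g_2,n_2\cup\{\bullet\}})$, as required.
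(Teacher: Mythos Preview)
The paper is a survey and does not itself supply a proof of this proposition; it is quoted from Graber--Pandharipande \cite{graber_pandharipande_constructions}, so there is no in-paper argument to compare against directly. That said, your sketch is essentially the argument given in the original reference: reduce to the additive generators of Theorem~\ref{maude}, form the fiber product of $\xi_A$ with $\iota$, and identify each component combinatorially as a product of boundary strata on the two factors, with the excess contribution coming from the normal line bundle of $\iota$, whose first Chern class is $-\psi_*-\psi_\bullet$. The organizing principle you allude to --- enumerating the ways the stable graph $A$ can be cut along an edge separating the two sides, and handling the self-intersection (non-transverse) case via excess intersection --- is precisely the content of the boundary restriction formula for tautological classes worked out in \cite{graber_pandharipande_constructions}. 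So your approach is correct and coincides with the standard one; the only thing missing is the explicit combinatorial statement of how the fiber product decomposes (as a disjoint union over ``$A$-structures'' on the one-edge graph underlying $\iota$), which is exactly the bookkeeping you flag.
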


The main idea for detecting non-tautological classes is using Proposition \ref{rh} and the existence of odd cohomology on the moduli spaces of curves.
Its existence implies that the K\"unneth decomposition of certain pull-backs is not tautological. 
The first example of a non-tautological algebraic cycle is constructed on $\overline{M}_{2h}$, 
for all sufficiently large odd $h$. Here is the description of the cycle: Let $Y \subset \overline{M}_{2h}$
denote the closure of the set of nonsingular curves of genus $g:=2h$ which admit a degree 2 map to a
nonsingular curve of genus $h$. The pullback of $[Y]$ under the gluing morphism 
$$\iota: \overline{M}_{h,1} \times \overline{M}_{h,1} \rightarrow \overline{M}_g$$
is shown to be a positive multiple of the diagonal 
$\Delta \subset \overline{M}_{h,1} \times \overline{M}_{h,1}$.
According to Pikaart \cite{pikaart_orbifold}, for sufficiently large $h$, the space $\overline{M}_{h,1}$
has odd cohomology. This shows that $Y$ is not a tautological class and in fact its image
in $H^*(\overline{M}_g)$ does not belong to the distinguished part $RH^*(\overline{M}_g)$.
Another example is provided by looking at the locus $Z$ inside $\overline{M}_{2,20}$ 
whose generic point corresponds to a 20-pointed, nonsingular, bielliptic curve of genus two
with 10 pairs of conjugate markings. The intersection of $Z$ with the boundary map 
$$\iota: \overline{M}_{1,11} \times \overline{M}_{1,11} \rightarrow \overline{M}_{2,20}$$
gives a positive multiple of the diagonal whose cohomology class is not tautological.
By using the results of Getzler \cite{getzler_semi} regarding the cohomology of $\overline{M}_{1,n}$
it is proven that the class $[Z]$ is not tautological even on the interior $M_{2,20}$.
Finally, it is proven that 
\begin{thm}\label{22}
The push-forward $\iota_*[ \Delta ]$ of the diagonal in 
$A^*(\overline{M}_{1,12} \times \overline{M}_{1,12})$ via the boundary inclusion,
$$\iota: \overline{M}_{1,12} \times \overline{M}_{1,12} \rightarrow \overline{M}_{2,22}$$
is not a tautological class even in cohomology. 
\end{thm}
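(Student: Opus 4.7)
The plan is to assume $\iota_*[\Delta] \in RH^*(\overline{M}_{2,22})$ and derive a contradiction by pulling back along $\iota$ itself. Under this assumption, Proposition \ref{rh} forces $\iota^*(\iota_*[\Delta])$ to admit a tautological K\"unneth decomposition, i.e.\ to lie in $RH^*(\overline{M}_{1,12}) \otimes RH^*(\overline{M}_{1,12})$ inside $H^*(\overline{M}_{1,12} \times \overline{M}_{1,12},\mathbb{Q})$. I would show this is incompatible with Deligne's nonzero class $\omega \in H^{11,0}(\overline{M}_{1,11})$.

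First I would compute the pullback via the self-intersection formula. Since $\iota$ is a regular codimension one embedding whose normal bundle satisfies $c_1(N_\iota) = -(\psi_* + \psi_\bullet)$, the negatives of the cotangent line classes at the two half-edges being glued,
$$\iota^*\iota_*[\Delta] \;=\; c_1(N_\iota) \cdot [\Delta] \;=\; -(\pi_1^*\psi + \pi_2^*\psi) \cdot [\Delta] \;=\; -2\, d_*\psi,$$
where $d \colon \overline{M}_{1,12} \hookrightarrow \overline{M}_{1,12} \times \overline{M}_{1,12}$ is the diagonal and $\psi$ denotes the cotangent line class at the gluing marking on $\overline{M}_{1,12}$.

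Next I would apply a slant product. For every $v \in H^*(\overline{M}_{1,12})$, the projection formula together with $\pi_2 \circ d = \mathrm{id}$ gives
$$(\pi_2)_*\bigl(d_*\psi \cdot \pi_1^*v\bigr) \;=\; \psi \cdot v.$$
If $d_*\psi$ belonged to $RH^*(\overline{M}_{1,12}) \otimes RH^*(\overline{M}_{1,12})$, the left-hand side would lie in $RH^*(\overline{M}_{1,12})$ for every $v$. Specialize to $v = p^*\omega$, where $p\colon \overline{M}_{1,12} \rightarrow \overline{M}_{1,11}$ forgets the gluing marking. Then $\psi \cdot p^*\omega$ has pure Hodge bidegree $(12,1)$, while tautological classes are algebraic and therefore of pure $(r,r)$ type. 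Hence $\psi \cdot p^*\omega \in RH^*$ would force $\psi \cdot p^*\omega = 0$.

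The main obstacle is to certify that $\psi \cdot p^*\omega \ne 0$ in $H^{13}(\overline{M}_{1,12}, \mathbb{C})$. I would pair it against $p^*\overline{\omega} \in H^{0,11}$ and apply the projection formula along $p$:
$$\int_{\overline{M}_{1,12}} \psi \cdot p^*(\omega \wedge \overline{\omega}) \;=\; \int_{\overline{M}_{1,11}} (p_*\psi) \cdot \omega \wedge \overline{\omega} \;=\; 11 \int_{\overline{M}_{1,11}} \omega \wedge \overline{\omega},$$
using $p_*\psi = \kappa_0 = 2g-2+n = 11$ on $\overline{M}_{1,11}$. The integral on the right is a nonzero multiple of the $L^2$ norm of $\omega$ on the smooth proper Deligne--Mumford stack $\overline{M}_{1,11}$, hence nonzero. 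The resulting contradiction proves the theorem.
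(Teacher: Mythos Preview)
Your proposal is correct and follows essentially the same strategy the paper indicates: assume tautologicality, invoke Proposition~\ref{rh} to force a tautological K\"unneth decomposition of $\iota^*\iota_*[\Delta]$, and reach a contradiction using the nonzero class in $H^{11,0}(\overline{M}_{1,11})$ arising from the discriminant cusp form. The paper only sketches the argument (``This gives odd cohomology classes on $\overline{M}_{1,12}$ and yields the desired result''); you have supplied the details---the self-intersection computation $\iota^*\iota_*[\Delta]=-2\,d_*\psi$, the slant product reduction to $\psi\cdot p^*\omega$, and the pairing against $p^*\overline{\omega}$ with $p_*\psi=\kappa_0=11$---all of which are standard and correctly executed.
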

The proof is based on the existence of a 
canonical non-zero holomorphic form $s \in H^{11,0}(\overline{M}_{1,11},\mathbb{C})$ coming from the 
discriminant cusp form of weight 12. 
This gives odd cohomology classes on $\overline{M}_{1,12}$ and yields the desired result.  

\begin{rem}
In a similar way, one may wonder whether the push-forward of the diagonal
$$\Delta_{11} \subset \overline{M}_{1,11} \times \overline{M}_{1,11},$$
which has K\"unneth components not in $RH^*(\overline{M}_{1,11})$, via the morphism
$$\iota: \overline{M}_{1,11} \times \overline{M}_{1,11} \times \overline{M}_{2,20}$$
belongs to the tautological part of the cohomology of $\overline{M}_{2,20}$. 
As it is mentioned in \cite{faber_pandharipande_taut_non_taut} this question is unsolved.
\end{rem}
In \cite{faber_pandharipande_taut_non_taut} three approaches for detecting and studying non-tautological classes are presented: 
\subsubsection{Point counting and modular forms} 
The method of counting points over finite fields gives important information about the cohomology groups. For a finite field $\mathbb{F}_q$ the set of $\mathbb{F}_q$-points of $\overline{M}_{g,n}$ is denoted by 
$\overline{M}_{g,n}(\mathbb{F}_q)$. 
The idea of detecting non-tautological classes is based on the following fact:
If all cohomology classes of $M_{g,n}$ come from the fundamental classes of its subvarieties then the number
$|M_{g,n}(\mathbb{F}_q)|$ will be a polynomial of degree $d$ in $q$, 
where $d=3g-3+n$ is the dimension of $M_{g,n}$. Otherwise, $M_{g,n}$ has a non-algebraic cohomology class.
In particular, this cohomology class is not tautological. 
The first example of detecting non-tautological classes by the method of point counting occurs in genus one.
It is proven that the number of points of $M_{1,11}$ over finite fields can not be described by a polynomial of degree 11. This means that $M_{1,11}$ has a non-tautological class. This is explained by studying the cohomology of local systems in genus one which is expressed in terms of elliptic cusp forms. 

\subsubsection{Representation theory} 
There is a natural action of the symmetric group $\Sigma_n$ on $\overline{M}_{g,n}$ by permuting the markings. 
Therefore, the cohomology ring $H^*(\overline{M}_{g,n})$ becomes a $\Sigma_n$-module. 
The analysis of the action of the symmetric group on $H^*(\overline{M}_{g,n})$ provides a second approach to find non-tautological cohomology. 

The length of an irreducible representation of $\Sigma_n$ is defined to be the number of parts in the corresponding partition of $n$. For a finite dimensional representation $V$ of the symmetric group $\Sigma_n$ the length $\ell(V)$ is defined to be the maximum of the lengths of the irreducible constituents.

\begin{thm}
For the tautological rings of the moduli spaces of curves, we have
\begin{enumerate}
\item[(i)] $\ell(R^k(\overline{M}_{g,n})) \leq \min (k+1,3g-2+n-k,[\frac{2g-1+n}{2}]),$
\item[(ii)] $\ell(R^k(M_{g,n}^{ct})) \leq \min (k+1,2g-2+n-k),$
\item[(iii)] $\ell(R^k(M_{g,n}^{rt})) \leq \min (k+1,g-1+n-k).$
\end{enumerate}
\end{thm}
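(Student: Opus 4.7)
\emph{Proof outline.} The plan is to combine the Graber--Pandharipande additive generators from Theorem \ref{maude} with Pieri/Littlewood--Richardson bounds on the length of induced $\Sigma_n$-representations. Any class in $R^k(\M_{g,n})$ is a $\mathbb{Q}$-linear combination of classes $\xi_{A*}\prod_v \theta_v$, where $A$ is a stable graph of genus $g$ with $n$ legs and $\theta_v$ is a monomial in $\psi$- and $\kappa$-classes at vertex $v$. Fixing the isomorphism type of $A$, a distribution $(n_1,\dots,n_V)$ of legs to vertices, a $\kappa$-monomial at each vertex, and a multiset of $\psi$-exponents at each vertex, the $\Sigma_n$-span of the resulting classes is a quotient of
$$
\mathrm{Ind}_{\Sigma_{n_1}\times\cdots\times\Sigma_{n_V}}^{\Sigma_n}\Bigl(\bigotimes_{v} W_v\Bigr),
$$
where $W_v$ is the $\Sigma_{n_v}$-module on the $\psi$-monomials of the prescribed exponent multiset at $v$ (the $\kappa$-factor being $\Sigma_{n_v}$-invariant).

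By Pieri's rule on the Frobenius side, the length of such an induced module is at most $\sum_v \ell(W_v)$. Each $W_v$ sits in the degree-$d_v^\psi$ part of $\mathrm{Sym}^{\bullet}$ of the $\Sigma_{n_v}$-permutation representation; it decomposes into orbits whose Frobenius characteristics are $h_{n_v-\ell(\lambda)}\prod_i h_{m_i(\lambda)}$, indexed by partitions $\lambda\vdash d_v^\psi$ with $\ell(\lambda)\leq n_v$. Iterated Pieri then gives $\ell(W_v)\leq 1+\ell(\lambda)\leq d_v^\psi+1$ for $n_v\geq 1$, and $0$ for leg-free vertices, yielding the master estimate
$$
\ell \;\leq\; \sum_{v:\,n_v\geq 1}(d_v^\psi+1).
$$

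The three inequalities of (i) are then combinatorial optimizations over decorated stable graphs. The first, $\ell\leq k+1$, follows from the codimension identity $k=E+\sum_v \deg(\theta_v)\geq E+\sum_v d_v^\psi$ together with the connectedness bound $V\leq E+1$, which give $\sum_v d_v^\psi+V\leq(k-E)+(E+1)=k+1$. The second, $\ell\leq 3g-2+n-k$, runs the same optimization against the complementary codimension $3g-3+n-k$, using $\dim\M_A=\sum_v(3g_v-3+n_v^{\mathrm{tot}})=(3g-3+n)-E$ to trade $V$ and $E$ against the dual degree budget. The third, $\ell\leq\lfloor(2g-1+n)/2\rfloor$, is a purely stability-driven estimate: summing $2g_v-2+n_v^{\mathrm{tot}}\geq 1$ over the vertices gives $V\leq 2g-2+n$, and a careful joint count of $V'$ (vertices carrying at least one leg), the $\psi$-content $\sum d_v^\psi$, and the half-edge identity $\sum n_v^{\mathrm{tot}}=n+2E$ -- weighting genus-zero vertices by $n_v^{\mathrm{tot}}\geq 3$ -- refines this to the stated form.

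The genuinely delicate step is the third bound, because one must optimize jointly under stability, connectedness, and the total codimension constraint, and the contribution of genus-zero vertices must be handled with care. The analogues (ii) and (iii) for $M_{g,n}^{ct}$ and $M_{g,n}^{rt}$ proceed by the same template, restricted to stable graphs carrying no non-separating edges (compact type) or with a unique non-rational component of genus $g$ (rational tails); these restrictions shrink $V$ and $E$ and yield the tighter codimension bounds $2g-2+n-k$ and $g-1+n-k$ respectively.
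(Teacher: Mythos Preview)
The paper itself does not contain a proof of this theorem; it is a survey and merely quotes the result from \cite{faber_pandharipande_taut_non_taut}. So there is no in-paper argument to compare against, only the Faber--Pandharipande original.

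Your overall strategy --- additive generators from Theorem~\ref{maude} together with the Littlewood--Richardson bound $\ell(\mathrm{Ind}\bigotimes W_v)\le\sum\ell(W_v)$ and the estimate $\ell(W_v)\le d_v^\psi+1$ --- is exactly the right one and does prove the first inequality $\ell\le k+1$ cleanly.

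There is, however, a genuine gap in your treatment of the second inequality $\ell\le 3g-2+n-k$. Your master estimate bounds the length of the \emph{free $\Sigma_n$-span of the generators}, and this quantity can strictly exceed $3g-2+n-k$. Take $g=0$, $n=4$, $k=1$ and the generator $\psi_1$ on the trivial one-vertex graph: the leg-$\psi$ exponents are $(1,0,0,0)$, with two distinct values, so your estimate gives $\ell\le 2$; but $3g-2+n-k=1$. The free span here is indeed $s_4\oplus s_{3,1}$, of length $2$. The theorem is still true because in $R^1(\overline{M}_{0,4})\cong\mathbb{Q}$ the relation $\psi_1=\psi_2=\psi_3=\psi_4$ collapses this to the trivial representation. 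In other words, the second bound is \emph{not} a statement about the strata algebra and cannot be obtained by ``running the same optimization against the complementary codimension'' on the generator side: the free span simply does not satisfy it. Any valid argument must invoke relations in the tautological ring (or some duality), and your sketch does not do this. Note too that the naive duality route $R^k\cong (R^{3g-3+n-k})^{\vee}$ is exactly the Gorenstein property, which the paper records as false in general, so that shortcut is also blocked.

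The same criticism applies, with more force, to your outline for the third bound $\lfloor(2g-1+n)/2\rfloor$. You correctly obtain $V\le 2g-2+n$ from stability, but this is much weaker than what is claimed, and the phrase ``a careful joint count \dots\ refines this'' is not an argument. The genus-zero case already shows the difficulty: for $\overline{M}_{0,n}$ the bound is $\lfloor(n-1)/2\rfloor$, which your master estimate (bounded above only by $n$ via $\sum r_v\le\sum n_v$) does not reach. The parts (ii) and (iii) inherit the same problem.

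So: the skeleton is right and the $k+1$ bound is done, but the two remaining bounds need a real argument beyond the free-span estimate.
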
  
Assuming the conjectural formula in \cite{faber_geer_genus2_1,faber_geer_genus2_2} for $H^*(\overline{M}_{2,n})$, 
several cohomology classes are obtained 
which can not be tautological because the length of the corresponding $\Sigma_n$-representation is too large.

\subsubsection{Boundary geometry}
We have seen that the existence of non-tautological cohomology for $\overline{M}_{1,11}$ 
leads to the construction of several non-tautological algebraic cycles on the moduli spaces of curves. 
In \cite{faber_pandharipande_taut_non_taut} new non-tautological classes in $H^*(\overline{M}_{2,21}, \mathbb{Q})$ are found. 
They define the \emph{left diagonal} cycle $$\Delta_L \subset \overline{M}_{1,12} \times \overline{M}_{1,11}$$
and the \emph{right diagonal} 
$$\Delta_R \subset \overline{M}_{1,11} \times \overline{M}_{1,12}.$$
It is shown in \cite{faber_pandharipande_taut_non_taut} that the push-forwards $\iota_{L*}[\Delta_L]$ and $\iota_{R*}[\Delta_R]$ under
the boundary morphisms 
$$\iota_L: \overline{M}_{1,12} \times \overline{M}_{1,11} \rightarrow \overline{M}_{2,21},$$
$$\iota_R: \overline{M}_{1,11} \times \overline{M}_{1,12} \rightarrow \overline{M}_{2,21},$$
are not tautological. 
This gives another proof of Theorem \ref{22} since the push-forward of the diagonal class $\iota_*[\Delta]$ via the canonical morphism $$\pi:\overline{M}_{2,22} \rightarrow \overline{M}_{2,21}$$
is the same as the class $\iota_{L*}[\Delta_L]$, which was seen to be non-tautological. 
The result follows as the push-forwards of tautological classes on $\overline{M}_{2,22}$
via the morphism $\pi$ belong to the tautological ring of $\overline{M}_{2,21}$.

\subsection{Cohomology of local systems}

Let $A_g$ be the moduli space of principally polarized abelian varieties of dimension $g$.
Denote by $\V$ the local system on $A_g$ whose fiber at the moduli point $[X] \in A_g$ is the symplectic vector space $H^1(X)$.
For a given partition $\la$ with at most $g$ non-zero parts there is a local system $\V_{\la}$ on $A_g$.
It is the irreducible representation of highest weight in 

$$Sym^{\la_1-\la_2} (\wedge^1 \V) \otimes Sym^{\la_2-\la_3} (\wedge^2 \V) \otimes \dots \otimes Sym^{\la_{g-1}-\la_g}( \wedge^{g-1} \V) \otimes (\wedge^g \V)^{\la_g}.$$

The analysis of these local systems and computing their cohomology is an interesting question.
In genus one the results are classical and the cohomology of local systems are expressed in terms of elliptic modular forms via the Eichler-Shimura isomorphism.
In genus two the story becomes more complicated.
According to a conjecture of Faber and van der Geer \cite{faber_geer_genus2_1} there are explicit formulas for the cohomology of local systems in genus two in terms of Siegel and elliptic modular forms. 
Their conjecture is proved for the regular highest weight in \cite{weissauer_trace, tehrani_strict} and for the general case in \cite{petersen_abelian_surfaces}. 
In genus three there are analogue conjectures by Bergstr\"om, Faber and van der Geer \cite{bergstrom_faber_geer_three}.
The knowledge of cohomology of such local systems is quite useful in the study of cohomology of moduli of curves.
In \cite{petersen_1, petersen_2} they were used to determine the cohomology of moduli of curves in genus two.
%To state the results we consider the natural projection of $M_{1,n}$ onto $M_{1,1}$ which only remembers the first marking.
%The local system $\V_k$, whose fiber over the moduli point $(C,p)$ is Sym$^k H^1(C)$ is defined on $M_{1,1}$.
%The Eichre-Shimura isomorphism identifies the cohomology group $H^1_c()$
%This isomorphism can be interpreted on moduli of curves as follows:
%$$H^1(M_{1,1},\V_k) \otimes \C \cong H^{k+1,0} \oplus H^{0,k+1} \oplus H^{k+1,k+1},$$
%and each summand is identified with a certain space of modular forms. 
The analysis of local systems for $A_g$ and $M_g$ for higher $g$ becomes much harder and at the moment there are not even conjectural description of their cohomology.
%According to the Langlands philosophy or perhaps beyond that picture one should expect modular forms.
While we are still far from understanding the cohomology of local systems in general they play a crucial role in the study of tautological classes.
In a work on progress with Petersen and Yin \cite{POWR} we study such connection.
The \emph{tautological cohomology} of local systems is studied.
It turns out that very explicit computations are possible to extract useful information about the tautological ring.
Using computer we have been able to find the tautological cohomology of all local systems in genus 3 and 4.
This gives a complete description of tautological rings of $C_g^n$ and $M_{g,n}^{rt}$ in genus 3 and 4 for every $n$ based on the results proved in \cite{tavakol_conjectural, petersen_poincare}.
There is also a conjectural description of the tautological cohomology of certain local systems in genus 5.
These conjectures give an interesting link to the Gorenstein conjectures.
One can show that the non-vanishing of the forth cohomology of the local system $\V_{2^4}$ on $M_5$ would give a new counterexample to the Gorenstein conjectures.
For a large class of local systems we have an explicit method which conjecturally determines their tautological cohomology.
Unfortunately, the local system $\V_{2^4}$ does not belong to this class.
For more details we refer to the recent report \cite{POWR} at the workshop Moduli spaces and Modular forms at Oberwolfach.

\part*{Integral computations on the moduli stack}
In \cite{mumford_picard} Mumford studies topologies defined for a moduli problem and the right way of defining its invariants. 
An invertible sheaf $L$ on the moduli problem $\mathcal{M}_g$ consists in two sets of data:

\begin{itemize}
\item
For all families $\pi: \mathcal{X} \rightarrow S$ of nonsingular curves of genus $g$, an invertible sheaf $L(\pi)$ on $S$.

\item 
For all morphisms $F$ between such families:
$$\begin{CD}
 \mathcal{X}_1  @>>>  \mathcal{X}_2 \\
 @VV \pi_1 V @VV \pi_2 V \\
 S_1 @> f >> S_2  
 \end{CD}$$

an isomorphism $L(F)$ of $L(\pi_1)$ and $f^*(L(\pi_2))$ satisfying a natural cocycle condition.
\end{itemize}
 
 He gives two different ways to describe the Picard group associated to the moduli problem in genus one. 
In particular, he determines the Picard group of $\mathcal{M}_{1,1}$ over the spectrum of a field of characteristic not 2 or 3. 
This is a cyclic group of order 12 generated by the class of the Hodge bundle.
Intersection theory on algebraic stacks and their moduli spaces is developed in \cite{vistoli_intersection} by Vistoli.
The special case of quotients by actions of reductive algebraic groups is treated in \cite{vistoli_chow}.
Fulton and Olsson in \cite{fulton_olsson_picard} consider a general base scheme $S$ and compute the Picard group 
$Pic(\mathcal{M}_{1,1,S})$ of the fiber product 
$\mathcal{M}_{1,1,S}:=S \times_{\text{Spec}(\mathbb{Z})} \mathcal{M}_{1,1}$ 
as well as the Picard group $Pic(\overline{\mathcal{M}}_{1,1,S})$ 
for the fiber of the Deligne-Mumford compactification 
$\overline{\mathcal{M}}_{1,1}$ of $\mathcal{M}_{1,1}$ over $S$. 
To state their result we recall the definition of the Hodge class $\lambda$ on the moduli stack: 
Every morphism $t:T \rightarrow  \mathcal{M}_{1,1}$ corresponds to an elliptic curve $f:E \rightarrow T$. 
The pullback $t^* \lambda$ is the line bundle $f_* \Omega^1_{E/T}$. 
The canonical extension of this bundle to the compactified space $\overline{\mathcal{M}}_{1,1}$
is denoted by the same letter $\lambda$.
They prove the following results:

\begin{thm}
Let $S$ be a scheme. Then the map $$\mathbb{Z}/(12) \times Pic(\mathbb{A}^1_S) \rightarrow  Pic(\mathcal{M}_{1,1,S}), \qquad (i, \mathcal{L}) \mapsto \lambda^{\otimes i} \otimes p^* \mathcal{L}$$
is an isomorphism if either of the following hold:

\begin{enumerate}
\item[(i)] $S$ is a $\mathbb{Z}[1/2]$-scheme.
\item[(ii)] $S$ is reduced.
\end{enumerate}
\end{thm}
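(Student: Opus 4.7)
The plan is to realize $\mathcal{M}_{1,1,S}$ as a global quotient stack via Weierstrass equations and then compute the equivariant Picard group directly. First I would recall that any elliptic curve $E \to T$ embeds in $\mathbb{P}^2_T$ via the sections of $\mathcal{O}(3[e])$ in a generalized Weierstrass form
$$ y^2 + a_1 xy + a_3 y = x^3 + a_2 x^2 + a_4 x + a_6, $$
and two such equations define the same pointed curve iff they differ by a transformation $x = u^2 x' + r$, $y = u^3 y' + u^2 s x' + t$, with $u \in \mathbb{G}_m$ and $(r,s,t) \in \mathbb{A}^3$. These changes of coordinates form an algebraic group scheme $G$ over $\mathbb{Z}$ sitting in an extension $1 \to V \to G \to \mathbb{G}_m \to 1$ with $V \cong \mathbb{A}^3$ unipotent. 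Letting $W = \mathrm{Spec}(\mathbb{Z}[a_1,a_2,a_3,a_4,a_6])$ and $U \subset W$ the non-vanishing locus of the discriminant $\Delta$, one obtains a canonical isomorphism $\mathcal{M}_{1,1} \cong [U/G]$, hence after base change $\mathcal{M}_{1,1,S} = [U_S/G_S]$ and $\mathrm{Pic}(\mathcal{M}_{1,1,S}) = \mathrm{Pic}^{G_S}(U_S)$.

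Second, I would exploit the standard exact sequence
$$ X^*(G_S) \longrightarrow \mathrm{Pic}^{G_S}(U_S) \longrightarrow \mathrm{Pic}(U_S), $$
where $X^*(G_S)$ denotes the character group. Since $V$ is unipotent, $X^*(G_S) = X^*(\mathbb{G}_m) = \mathbb{Z}$, and the Hodge bundle $\lambda$ is precisely the character of weight $1$ for the $\mathbb{G}_m$-action (with weights $(2,4,6,8,12)$ on $(a_1,\ldots,a_6)$). Since $U_S$ is the complement of the divisor $\{\Delta = 0\}$ inside $\mathbb{A}^5_S$, the excision sequence for Picard groups together with $\mathbb{A}^1$-invariance (giving $\mathrm{Pic}(\mathbb{A}^5_S) = \mathrm{Pic}(S)$) yields
$$ \mathbb{Z} \cdot [\Delta] \longrightarrow \mathrm{Pic}(S) \longrightarrow \mathrm{Pic}(U_S) \longrightarrow 0. $$
Because $\Delta$ is weighted-homogeneous of total weight $12$, the character of weight $1$ maps to a class of order exactly $12$ in $\mathrm{Pic}^{G_S}(U_S)$, producing the relation $\lambda^{\otimes 12} \cong \mathcal{O}$ and identifying the kernel of the map in the theorem with $0$. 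The factor $\mathrm{Pic}(\mathbb{A}^1_S)$ arises by pullback along the $j$-invariant $p : \mathcal{M}_{1,1,S} \to \mathbb{A}^1_S$; combined with the observation that $\mathrm{Pic}(\mathbb{A}^1_S) = \mathrm{Pic}(S)$ when $S$ is suitable, this matches the other term in the excision sequence.

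The main obstacle, and what forces the hypotheses (i) and (ii), is the unipotent kernel $V$: a priori, $V$-equivariant line bundles on $U_S$ could contribute extra classes that are not captured by $X^*(G)$. Concretely, one must verify that every $G_S$-equivariant line bundle is already $V$-equivariantly trivial, so that the character-theoretic computation above captures all of $\mathrm{Pic}^{G_S}(U_S)$. Under (i), with $2$ invertible, one can put every elliptic curve into the form $y^2 = x^3 + a_2 x^2 + a_4 x + a_6$, reducing $G$ to $\mathbb{G}_m \ltimes \mathbb{G}_a$; the remaining $\mathbb{G}_a$-cohomology vanishes by a direct descent/$\mathbb{A}^1$-triviality computation on affine space. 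Under (ii), one reduces by spreading-out to a reduced Noetherian base and then argues via the generic fibre, using that for reduced $R$ the pointed set $H^1_{\mathrm{fppf}}(R,V)$ of torsors under the unipotent $V$ vanishes (any such torsor is trivialized by a section after replacing $R$ with a Zariski cover), so the $V$-equivariant structure on any line bundle is unique up to unique isomorphism.

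Finally, injectivity of the map in the theorem is verified by pulling back a hypothetical trivialization of $\lambda^{\otimes i} \otimes p^*\mathcal{L}$ along a family with non-constant $j$-invariant: the $\mathbb{Z}/12$ component is detected by the action on a generic fibre (as in Mumford's computation over a field), and the $\mathrm{Pic}(\mathbb{A}^1_S)$ component is detected by restricting to a fibre of $p$ over a point where the automorphism group is trivial, i.e.\ away from $j = 0, 1728$; away from these two points the map $p$ is representable smooth with connected fibres, so pullback is injective on Picard groups. Combining surjectivity from the equivariant computation with these two injectivity checks completes the proof.
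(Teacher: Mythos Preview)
The paper you are working from is a survey; it states this theorem as a result of Fulton and Olsson and gives no proof of its own. There is therefore nothing in the paper to compare your proposal against directly.

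That said, your outline is essentially the Fulton--Olsson strategy: present $\mathcal{M}_{1,1}$ as $[U/G]$ via Weierstrass equations, then compute the $G$-equivariant Picard group of $U$ by separating the character contribution (which produces $\lambda$ and the relation $\lambda^{\otimes 12}\cong\mathcal{O}$ through the weight-$12$ discriminant) from what is pulled back along the $j$-line.

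One step deserves more care. You invoke ``$\mathbb{A}^1$-invariance'' to conclude $\mathrm{Pic}(\mathbb{A}^5_S)=\mathrm{Pic}(S)$, but this fails for general $S$; indeed, failure of $\mathbb{A}^1$-invariance of $\mathrm{Pic}$ is exactly why the theorem is stated with $\mathrm{Pic}(\mathbb{A}^1_S)$ rather than $\mathrm{Pic}(S)$, so hypotheses (i) and (ii) are \emph{not} there to rescue that identification. In Fulton--Olsson the hypotheses enter rather when one tries to show that every equivariant line bundle on $U_S$ really is accounted for by a character plus a pullback from the $j$-line, the difficulty being concentrated over the points $j=0,1728$ with extra automorphisms. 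Your diagnosis that the obstruction lives in the unipotent kernel $V$ is in the right spirit, but the assertion that $H^1_{\mathrm{fppf}}(R,V)=0$ for reduced $R$ is not the correct formulation of what is needed and would have to be replaced by the more careful local analysis carried out in their paper.
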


\begin{thm}
The map $$\mathbb{Z} \times Pic(S) \rightarrow Pic(\overline{\mathcal{M}}_{1,1,S}), \qquad (n,M) \mapsto \lambda^n \otimes_{\mathcal{O}_S} M $$
is an isomorphism for every scheme $S$.
\end{thm}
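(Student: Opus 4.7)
The plan is to combine the previous theorem on the open stack $\mathcal{M}_{1,1,S}$ with a localization sequence coming from the cusp divisor, together with a deformation argument to handle the non-reduced case integrally. First I would set up the excision sequence
\[
\mathbb{Z} \xrightarrow{\;1 \mapsto [\delta]\;} \text{Pic}(\overline{\mathcal{M}}_{1,1,S}) \longrightarrow \text{Pic}(\mathcal{M}_{1,1,S}) \longrightarrow 0,
\]
where $\delta$ is the Cartier divisor on $\overline{\mathcal{M}}_{1,1,S}$ cut out by the nodal locus. The key geometric input is that the modular discriminant $\Delta$ is a canonical global section of $\lambda^{\otimes 12}$ whose vanishing divisor is exactly $\delta$ (with multiplicity one). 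This gives the relation $[\delta] = 12\lambda$ in $\text{Pic}(\overline{\mathcal{M}}_{1,1,S})$, so that the image of the leftmost $\mathbb{Z}$ lies inside the subgroup generated by $\lambda$.

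Next I would establish surjectivity and injectivity when $S$ is either reduced or a $\mathbb{Z}[1/2]$-scheme. Given a line bundle $L$ on $\overline{\mathcal{M}}_{1,1,S}$, restrict to the open substack, apply the previous theorem to write $L|_{\mathcal{M}_{1,1,S}} \cong \lambda^i \otimes p^*\mathcal{L}$ with $i \in \mathbb{Z}/12$ and $\mathcal{L} \in \text{Pic}(\mathbb{A}^1_S) = \text{Pic}(S)$, and then use the excision sequence together with $[\delta] = 12\lambda$ to lift the $\mathbb{Z}/12$ to $\mathbb{Z}$ and absorb the discrepancy into powers of $\lambda$. Injectivity follows from the fact that $p_*\mathcal{O}_{\overline{\mathcal{M}}_{1,1,S}} = \mathcal{O}_S$ (so that $p^*$ is injective on $\text{Pic}$) combined with the known non-triviality of $\lambda$ on geometric fibers — indeed, on any geometric point $\text{Spec}(k) \to S$ the restriction $\lambda$ generates $\text{Pic}(\overline{\mathcal{M}}_{1,1,k}) = \mathbb{Z}$, so $\lambda^n \otimes p^*\mathcal{L} \cong \mathcal{O}$ forces $n = 0$ and then $\mathcal{L} \cong \mathcal{O}_S$.

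Finally I would bootstrap to an arbitrary scheme $S$, including non-reduced schemes over $\mathbb{F}_2$ and $\mathbb{F}_3$ which the previous theorem does not cover. Since $\overline{\mathcal{M}}_{1,1}$ is smooth and proper over $\text{Spec}(\mathbb{Z})$, for any closed immersion $S_0 \hookrightarrow S$ defined by a square-zero ideal $\mathcal{I}$, the kernel of $\text{Pic}(\overline{\mathcal{M}}_{1,1,S}) \to \text{Pic}(\overline{\mathcal{M}}_{1,1,S_0})$ is controlled by $H^1$ of the structure sheaf tensored with $\mathcal{I}$, and these groups match the corresponding obstruction groups for $\text{Pic}(S) \to \text{Pic}(S_0)$ via $p_*\mathcal{O} = \mathcal{O}_S$ and the projection formula. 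A noetherian approximation then reduces the general case to the reduced case already handled.

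The step I expect to be the main obstacle is the integral verification of the relation $[\delta] = 12\lambda$ and the ensuing deformation-theoretic comparison in residue characteristics $2$ and $3$, where the usual Weierstrass presentation of $\overline{\mathcal{M}}_{1,1}$ as a quotient of $\mathbb{A}^2 \setminus 0$ by a $\mathbb{G}_m$-action with weights $4, 6$ degenerates. Careful bookkeeping with the universal Weierstrass equation $y^2 + a_1 xy + a_3 y = x^3 + a_2 x^2 + a_4 x + a_6$ and its discriminant $\Delta \in \Gamma(\overline{\mathcal{M}}_{1,1}, \lambda^{\otimes 12})$ — which is a well-defined global section over $\text{Spec}(\mathbb{Z})$ — should circumvent the difficulty, since this single section realizes the relation uniformly.
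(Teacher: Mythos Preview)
The paper does not contain a proof of this theorem: it is stated purely as a citation of Fulton and Olsson's result, with no argument given. There is therefore nothing in the paper to compare your proposal against.

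That said, your outline is essentially the strategy of the original Fulton--Olsson paper, and it is sound. A few remarks on places where you should be careful. First, the exact sequence you write is really a localization sequence for divisor class groups; to identify it with a sequence of Picard groups you are implicitly using that $\overline{\mathcal{M}}_{1,1}$ is smooth over $\mathrm{Spec}(\mathbb{Z})$, which is fine but should be said. Second, when you invoke the previous theorem on the open part you land in $\mathbb{Z}/12 \times \mathrm{Pic}(\mathbb{A}^1_S)$, not $\mathbb{Z}/12 \times \mathrm{Pic}(S)$; the passage from $\mathrm{Pic}(\mathbb{A}^1_S)$ to $\mathrm{Pic}(S)$ is exactly one of the places where properness of $\overline{\mathcal{M}}_{1,1}$ over $S$ does work for you, and this should be made explicit. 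Third, the deformation step requires knowing not just $p_*\mathcal{O}=\mathcal{O}_S$ but also $R^1p_*\mathcal{O}=0$ for the morphism $p:\overline{\mathcal{M}}_{1,1,S}\to S$; this is what makes the obstruction groups on the stack match those on the base, and it is the cohomological input that replaces the missing reducedness hypothesis. Your final paragraph correctly identifies the genuine content: the discriminant section of $\lambda^{\otimes 12}$ is defined integrally via the long Weierstrass form, so the relation $[\delta]=12\lambda$ holds over $\mathrm{Spec}(\mathbb{Z})$ without any restriction on the characteristic.
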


In \cite{mumford_stability} Mumford proves that the Picard group of the moduli functor $\mathcal{M}_{g,n}$ has no torsion and contains the Picard group $\text{Pic}(M_{g,n})$ of the moduli space as a subgroup of finite index. 
In \cite{arbarello_cornalba_picard} Arbarello and Cornalba study the Picard group of the moduli spaces of curves.  
They give an explicit basis for the Picard group of $\mathcal{M}_{g,n}$ and its compactification
$\overline{\mathcal{M}}_{g,n}$ by means of stable curves. They first prove the following result for $n=0$:

\begin{thm}
For any $g \geq 3$, $Pic(\overline{\mathcal{M}}_g)$ is freely generated by $\lambda,\delta_0, \dots , \delta_{[g/2]}$,
while $Pic(\mathcal{M}_g)$ is freely generated by $\lambda$.
\end{thm}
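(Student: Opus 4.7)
The plan is to proceed in two steps. First I would establish that $Pic(\mathcal{M}_g)$ is freely generated by $\lambda$, and then use the excision sequence associated to the boundary $\partial\overline{\mathcal{M}}_g = \overline{\mathcal{M}}_g \setminus \mathcal{M}_g$ to deduce the structure of $Pic(\overline{\mathcal{M}}_g)$. The boundary is a divisor with normal crossings whose irreducible components are $\Delta_0, \Delta_1, \ldots, \Delta_{[g/2]}$; the associated classes $\delta_i$ restrict trivially to $\mathcal{M}_g$, yielding an exact sequence
\begin{equation*}
\bigoplus_{i=0}^{[g/2]} \mathbb{Z} \cdot \delta_i \xrightarrow{\alpha} Pic(\overline{\mathcal{M}}_g) \xrightarrow{\rho} Pic(\mathcal{M}_g) \to 0
\end{equation*}
in which $\rho$ is restriction.

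For the open part, I would combine Harer's calculation of $H^2(\mathcal{M}_g,\mathbb{Q})$ (cited earlier: for $n=0$ the group is one-dimensional and generated by $\kappa_1$, hence rationally by $\lambda$ via Mumford's Grothendieck-Riemann-Roch identity $\kappa_1 = 12\lambda$ on the open part) with Mumford's theorem that the Picard functor of $\mathcal{M}_g$ is torsion-free. Since $c_1$ injects $Pic(\mathcal{M}_g)$ into $H^2(\mathcal{M}_g,\mathbb{Z})$, the Picard group has rank exactly one. To upgrade $\lambda$ from a generator up to a scalar to an honest $\mathbb{Z}$-generator, I would exhibit a family $\pi:\mathcal{X}\to B$ of smooth genus $g$ curves on which $\deg \lambda(\pi) = \pm 1$; suitable one-parameter families of hyperelliptic curves realize this.

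For the compactification, $\lambda$ lifts canonically to $\overline{\mathcal{M}}_g$ as the determinant of the extended Hodge bundle, so the problem is reduced to injectivity of $\alpha$. This I would carry out by the standard test-curve technique: for each index $i$ I would construct a complete one-parameter family $B_i \to \overline{\mathcal{M}}_g$ of stable curves and compute the intersection numbers $B_i \cdot \delta_j$ and $B_i \cdot \lambda$. For $\delta_0$ one uses a Lefschetz-type pencil of curves whose only degenerations are irreducible nodal; for $\delta_i$ with $i \geq 1$ one fixes a pointed curve of genus $g-i$ and attaches to it a varying pointed curve of genus $i$, producing a family that meets only $\Delta_i$. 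Proving that the resulting intersection pairing is \emph{unimodular} (not merely non-degenerate) will give injectivity of $\alpha$ over $\mathbb{Z}$, and then a splitting of $\rho$ that sends $\lambda\in Pic(\mathcal{M}_g)$ to the extended $\lambda\in Pic(\overline{\mathcal{M}}_g)$ completes the proof.

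The main obstacle is precisely the unimodularity of the test-curve intersection matrix, since degeneration arguments tend to give only rational rank statements. One must work on the moduli stack and not the coarse space, which forces a careful bookkeeping of stabilizer orders along automorphism loci (hyperelliptic, bielliptic, and so on), since a stacky factor of $1/2$ or $1/|\mathrm{Aut}|$ in an intersection pairing would spoil the integrality of the splitting. A further delicate point arises for small $g$, most visibly at $g = 3$, where the hyperelliptic locus appears in codimension one; there one must verify that the hyperelliptic divisor contributes no extra Picard class, which follows from its expression as an explicit tautological combination of $\lambda$ and the boundary classes derived from Mumford's formula $[H]=\tfrac{1}{g+1}\bigl((2^g-1)\kappa_{g-2}-\cdots+(-1)^{g-2}(6g-6)\lambda_{g-2}\bigr)$ recorded earlier in this paper.
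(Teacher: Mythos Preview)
Your plan is essentially the one the paper attributes to Arbarello--Cornalba: Harer's computation of $H^2$ gives the rational statement, and suitable one-parameter test families promote it to an integral one while establishing linear independence. Your packaging via the excision sequence and a splitting by the extended Hodge bundle is a clean reorganization, but the substantive inputs (Harer, Mumford's torsion-freeness, and degree computations on test families) are the same as in the paper's sketch.

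Two small corrections. First, you overstate what is needed from the test-curve matrix. In your own setup, once $Pic(\mathcal{M}_g)=\mathbb{Z}\lambda$ is known, injectivity of $\alpha$ only requires that the $\delta_i$ be linearly independent in $Pic(\overline{\mathcal{M}}_g)$; a rationally non-degenerate matrix $(B_j\cdot\delta_i)$ already forces this, and the short exact sequence then splits for free because $\mathbb{Z}$ is projective, with the extended $\lambda$ giving the splitting. Unimodularity is not needed, and chasing it would make the argument harder than necessary. Second, the final paragraph on the hyperelliptic locus at $g=3$ is a detour: Harer's theorem already gives $\dim_{\mathbb{Q}}H^2(\mathcal{M}_3,\mathbb{Q})=1$, so there is no extra divisor class to explain away, and Mumford's formula for $[H]$ plays no role here.
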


Their proof goes as follows: From Harer's theorem it follows that any class in the Picard group of 
$\overline{\mathcal{M}}_g$ 
is a linear combination of $\lambda$ and the $\delta_i$'s with rational coefficients. By finding suitable families of curves and taking degrees it is shown that these coefficients are indeed integers and the generators are linearly independent. 
The generalization of this result considers pointed curves: 
\begin{thm}
For every $g \geq 3$, $Pic(\overline{\mathcal{M}}_{g,n})$ is freely generated by $\lambda$, the $\psi$'s and the $\delta$'s,
while $Pic(\mathcal{M}_{g,n})$ is freely generated by $\lambda$ and the $\psi$'s.
\end{thm}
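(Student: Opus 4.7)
I would argue by induction on $n$, with base case $n=0$ supplied by the preceding theorem. The pivotal input is Theorem \ref{coh}, which for $g \geq 3$ identifies $H^2(\overline{\mathcal{M}}_{g,n},\mathbb{Q})$ as the $\mathbb{Q}$-vector space freely generated (modulo the single relation $\delta_{a,S}=\delta_{g-a,S^c}$) by $\lambda, \kappa_1, \psi_1,\dots,\psi_n$ and the boundary classes. In fact $\kappa_1$ is expressible through the other generators via the Grothendieck-Riemann-Roch identity recalled earlier, so $H^2$ is freely generated rationally by the $\lambda,\psi_i,\delta$'s subject only to the boundary symmetry. Combined with Harer's vanishing $H^1(\mathcal{M}_{g,n},\mathbb{Q})=0$ and Mumford's theorem that $\text{Pic}(\mathcal{M}_{g,n}^{\text{func}})$ is torsion-free, this pins down $\text{Pic}(\overline{\mathcal{M}}_{g,n})\otimes\mathbb{Q}$ and $\text{Pic}(\mathcal{M}_{g,n})\otimes\mathbb{Q}$, reducing the problem to upgrading from $\mathbb{Q}$-coefficients to $\mathbb{Z}$-coefficients.

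For the inductive step I would exploit the forgetful morphism $\pi:\overline{\mathcal{M}}_{g,n+1}\to\overline{\mathcal{M}}_{g,n}$, which realises the target as the universal stable curve over the source. Writing $L_{n+1}=\mathbb{L}_{n+1}$ for the cotangent line at the new marking and $\sigma_i$ for the section giving the $i$th marking, one obtains a decomposition
\begin{equation*}
\text{Pic}(\overline{\mathcal{M}}_{g,n+1})=\pi^*\text{Pic}(\overline{\mathcal{M}}_{g,n})\oplus\mathbb{Z}\cdot\psi_{n+1}\oplus\bigoplus_{i=1}^n\mathbb{Z}\cdot\delta_{0,\{i,n+1\}},
\end{equation*}
up to verifying that the right-hand side really surjects and that the summands are independent. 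The surjectivity is the restriction principle for a relative curve: any line bundle, after twisting by suitable combinations of $\psi_{n+1}$ and the new boundary divisors $\delta_{0,\{i,n+1\}}$, becomes trivial on fibres and therefore descends from the base. Together with the inductive hypothesis on $\overline{\mathcal{M}}_{g,n}$, this produces the claimed spanning set and converts the problem to showing integral freeness. The open version follows by excising the boundary strata: the excision sequence $\bigoplus_{D}\mathbb{Z}\cdot[D]\to\text{Pic}(\overline{\mathcal{M}}_{g,n})\to\text{Pic}(\mathcal{M}_{g,n})\to 0$ has image exactly the subgroup generated by the boundary classes, so quotienting by it leaves the free group on $\lambda$ and the $\psi_i$.

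The main obstacle, as in the $n=0$ case, will be establishing integral independence and that the proposed generators are integrally saturated inside $\text{Pic}$. The strategy is to exhibit, for each member of the proposed basis, a test family $f:X\to B$ (with $B$ a smooth proper curve) whose induced morphism $B\to\overline{\mathcal{M}}_{g,n}$ pairs non-trivially with exactly one generator, yielding a matrix of degrees whose determinant is $\pm 1$. Natural candidates are: an elliptic pencil attached to a fixed $(g-1)$-pointed curve to detect $\lambda$ and $\delta_0$; families of two-component stable curves with varying node location to detect boundary classes $\delta_{a,S}$; and Lefschetz pencils inside the fibre of $\pi$ passing through chosen marked points to detect $\psi_{n+1}$ and the divisors $\delta_{0,\{i,n+1\}}$. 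Assembling these tests compatibly across the inductive step (so that the unimodularity is preserved under adding a marking) is the delicate bookkeeping that carries the argument through; everything else is formal consequence of Theorem \ref{coh}, Harer's vanishing, and Mumford's torsion-freeness.
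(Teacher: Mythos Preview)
Your overall architecture---rational generation via Theorem~\ref{coh} (combined with $H^1=0$ and Mumford's torsion-freeness), then test families to force integrality and independence---is exactly the strategy the paper attributes to Arbarello and Cornalba, and your inductive organisation through the forgetful map is a natural way to run it for $n>0$.

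There is, however, a real gap in the ``restriction principle'' step. You claim that after twisting by $\psi_{n+1}$ and the $\delta_{0,\{i,n+1\}}$ an arbitrary line bundle becomes \emph{trivial} on fibres. Twisting can only adjust the multidegree; on a fixed smooth fibre $C$ the group $\text{Pic}(C)$ is not generated by $\omega_C$ and the marked points, so degree zero is far from trivial. The assertion that a line bundle on the \emph{universal} curve with fibrewise degree zero is fibrewise trivial is precisely the strong Franchetta conjecture, and the standard proofs of Franchetta go through Harer's theorem or through the very Picard computation you are trying to establish---so invoking it here is circular. The inductive splitting you wrote down is therefore not something you can take as input.

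Fortunately this does not sink the argument, because the restriction step is redundant: rational generation already comes from Theorem~\ref{coh}, so once you know $L$ is a $\mathbb{Q}$-combination of $\lambda$, the $\psi_i$, and the boundary classes, the test families in your third paragraph suffice to pin down the coefficients as integers and to prove independence. That is how the paper (following Arbarello--Cornalba) proceeds. Drop the fibrewise-triviality claim and route everything through $H^2$ plus test curves, and the proof goes through.
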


Another interesting question is to determine the Picard group of the moduli space $\overline{M}_{g,n}$.   
This is the content of the following proposition proven in \cite{arbarello_cornalba_picard}:
\begin{prop}
If $g \geq 3$, $Pic(\overline{M}_{g,n})$ is the index two subgroup of $Pic(\overline{\mathcal{M}}_{g,n})$ generated by 
$\psi_1, \dots , \psi_n, 2 \lambda, \lambda+\delta_1$, and the boundary classes different from $\delta_1$.
\end{prop}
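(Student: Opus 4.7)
The plan is to realize $\mathrm{Pic}(\overline{M}_{g,n})$ as the subgroup of $\mathrm{Pic}(\overline{\mathcal{M}}_{g,n})$ consisting of those line bundles on the stack that descend to the coarse moduli space. The general principle in use is that, for a smooth Deligne-Mumford stack $\mathcal{X}$ with coarse space $X$, the pullback $\mathrm{Pic}(X) \hookrightarrow \mathrm{Pic}(\mathcal{X})$ is injective and has image equal to the set of line bundles $L$ whose fiber at each geometric point carries the trivial character of the corresponding stabilizer. This criterion is concretely available here by presenting $\overline{\mathcal{M}}_{g,n}$ as a global quotient of a smooth variety by a finite group, via the level structures of Looijenga and Boggi-Pikaart quoted earlier.

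Using the previous theorem, I write every class uniquely as $L = a\lambda + \sum_{j=1}^n b_j \psi_j + \sum_i c_i \delta_i$. For $g \geq 3$, the interior $\mathcal{M}_{g,n}$ and every boundary divisor other than $\delta_1$ generically parametrize curves with trivial automorphism group; only $\delta_1$, whose generic point is a smooth curve $C'$ of genus $g-1$ joined at a node $p$ to an unmarked elliptic tail $E$, carries a nontrivial generic stabilizer, namely $\mathbb{Z}/2$ generated by the elliptic involution $\iota$ that fixes $C'$ pointwise and acts as $-1$ on $E$ fixing $p$.

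The key computation is the $\iota$-character on the fiber of each generator at a general point of $\delta_1$. Each $\psi_j$ is fixed because all markings lie on $C'$. Each $\delta_i$ with $i \neq 1$ is fixed, since the canonical rational section of $\mathcal{O}(\delta_i)$ is a non-vanishing equivariant section near such a point. The Hodge class $\lambda$ has character $-1$: $\iota^*$ restricts to the identity on $H^0(C', \omega_{C'})$ and to $-1$ on the one-dimensional space $H^0(E, \omega_E)$ (the Weierstrass differential $dx/y$ flips under $y \mapsto -y$), so the determinant action is $-1$. The class $\delta_1$ also has character $-1$, because the fiber of $\mathcal{O}(\delta_1)$ at a generic point of $\delta_1$ is the normal line $T_p C' \otimes T_p E$ to the boundary (by the deformation theory of a node), on which $\iota$ acts by $(+1) \cdot (-1) = -1$. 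Consequently $L$ descends exactly when $(-1)^{a+c_1} = 1$, and this cuts out the index-two sublattice of $\mathrm{Pic}(\overline{\mathcal{M}}_{g,n})$ freely generated by $\psi_1, \dots, \psi_n$, $2\lambda$, $\lambda + \delta_1$, and the boundary classes different from $\delta_1$.

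The main obstacle is the sign computation, in particular the identification of $\mathcal{O}(\delta_1)|_{\delta_1}$ with the normal bundle $T_p C' \otimes T_p E$ via the node-smoothing parameter, which is what pins down the $\iota$-action on $\delta_1$. A second delicate point is verifying that the single codimension-one stratum $\delta_1$ is the only one that can impose a constraint: additional automorphisms of special stable curves (for instance hyperelliptic involutions) could in principle cut down the sublattice further, and one must check on each such locus that its character on any $L$ with $a + c_1$ even is already trivial. The hypothesis $g \geq 3$ is what confines all such additional automorphism loci to codimension at least two with the right compatibility, so that no integrality requirement beyond $a + c_1 \equiv 0 \pmod 2$ is produced.
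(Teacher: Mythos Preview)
The paper does not give its own proof of this proposition; it simply records the statement and attributes it to Arbarello and Cornalba \cite{arbarello_cornalba_picard}. So there is no in-paper argument to compare against.

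Your approach is the standard one, and it is essentially the argument Arbarello and Cornalba carry out: identify $\mathrm{Pic}(\overline{M}_{g,n})$ inside $\mathrm{Pic}(\overline{\mathcal{M}}_{g,n})$ via the descent criterion for line bundles along the coarse-moduli map, observe that for $g\geq 3$ the generic stabilizer is nontrivial only along $\delta_1$, and compute the $\mathbb{Z}/2$-characters of $\lambda$ and $\delta_1$ under the elliptic involution on the tail. Your sign computations (via $H^0(E,\omega_E)$ for $\lambda$ and via the normal line $T_pC'\otimes T_pE$ for $\delta_1$) are correct.

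One point deserves a sharper statement than you give it. You say higher-codimension automorphism loci impose no further integrality conditions because they sit in codimension $\geq 2$. That is the right intuition, but the clean justification is not merely ``codimension $\geq 2$'': one uses that $\overline{M}_{g,n}$ is normal, so a line bundle defined on the complement of a codimension-$\geq 2$ set extends uniquely, and that pullback along $\overline{\mathcal{M}}_{g,n}\to\overline{M}_{g,n}$ is injective on Picard groups; together these force the extended bundle on the coarse space to pull back to $L$ everywhere once it does so generically. Making this explicit closes the only potential gap in your outline.
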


\subsection{The Picard group of the moduli stack of hyperelliptic curves}

In \cite{arsie_vistoli_stacks} Arsie and Vistoli study the moduli of cyclic covers of projective spaces. 
As a result they prove that the Picard group of the moduli stack $\mathcal{H}_g$ of smooth hyperelliptic curves of genus $g \geq 2$ is finite cyclic. This group is shown to be isomorphic to $\mathbb{Z}/(4g+2) \mathbb{Z}$ for $g$ even, and to $\mathbb{Z}/2(4g+2) \mathbb{Z}$ for $g$ odd. 
In \cite{gorchinskiy_viviani_picard} Gorchinskiy and Viviani describe the stack $\mathcal{H}_g$ as a quotient stack. 
Their description leads to a geometric construction of generators for the Picard group.
In \cite{cornalba_harris_divisor} Cornalba and Harris show that in characteristic zero, the identity 
\begin{equation}\label{odd}\tag{3}
(8g+4) \lambda=g \xi_{irr}+2 \sum_{i=1}^{[g-1/2]} (i+1)(g-i) \xi_i +4 \sum_{j=1}^{[g/2]} j(g-j) \delta_j
\end{equation}
holds in the rational Picard group of the closure $\overline{\mathcal{H}}_g$ of $\mathcal{H}_g$ inside 
$\overline{\mathcal{M}}_g$. 
Here, the class $\lambda$ stands for the Hodge class and the boundary classes  
$\xi_{irr}, \xi_1, \dots , \xi_{[g-1/2]} , \delta_1, \dots , \delta_{[g/2]}$ denote the irreducible components 
of the complement of $\mathcal{H}_g$ in $\overline{\mathcal{H}}_g$.  
For the precise definition of these classes see \cite{cornalba_picard} or \cite{cornalba_harris_divisor}.
Cornalba \cite{cornalba_picard} has improved this result by showing that this identity is valid already 
in the integral Picard group of 
$\overline{\mathcal{H}}_g$. 
Cornalba observes that half of the relation \eqref{odd} holds in the Picard group of the moduli stack
$\overline{\mathcal{H}}_g$: 
\begin{equation}\label{even}\tag{4}
(4g+2) \lambda=g \xi_{irr}/2+ \sum_{i=1}^{[g-1/2]} (i+1)(g-i) \xi_i + 2\sum_{j=1}^{[g/2]} j(g-j) \delta_j
\end{equation}
These two related identities together give a complete description of the 
Picard group when $g$ is not divisible by 4.
More precisely, Cornalba shows that when $g \geq 2$ is an integer as above, then 
Pic$(\overline{\mathcal{H}}_g)$ is generated by $\lambda$ and by the boundary classes. 
The relations between these classes are generated by \eqref{odd} when $g$ is odd and by
\eqref{even} when $g$ is even. 
When $g$ is divisible by 4 the situation becomes more subtle. 
In fact, as observed by Gorchinskiy and Viviani in \cite{gorchinskiy_viviani_picard} the Picard group of the open part 
$\mathcal{H}_g$ is not generated by the Hodge class in this case.
Cornalba solves this problem by introducing a geometrically defined line bundle $\mathcal{Z}$
on $\overline{\mathcal{H}}_g$, whose class together with the boundary divisors 
gives a set of generators for the Picard group.
He also describes the space of relations among the generators. 

\subsection{Equivariant Intersection Theory} 
In \cite{vistoli_equivariant}, Vistoli studies equivariant Grothendieck groups and equivariant Chow groups. 
To mention his result let us recall the related notions for the classical case. 
Let $X$ be a smooth separated scheme of finite type over a field $k$ and denote by 
$K_0(X)$ the Grothendieck ring of vector bundles on $X$ tensored with the field $\mathbb{Q}$ of rational numbers. 
The Chern character $$ch:K_0(X) \rightarrow A^*(X) \otimes \mathbb{Q},$$
defines a ring homomorphism, which commutes with pullback. 
When the scheme $X$ is not necessarily smooth one can consider $K_0^{'}(X)$ to be the Grothendieck group of coherent sheaves on $X$ tensored with $\mathbb{Q}$. 
In this situation there is a group isomorphism $$\tau_X: K_0^{'}(X) \rightarrow A_*(X) \otimes \mathbb{Q},$$
called the Riemann-Roch map, which commutes with proper pushforward. 
To study the equivariant case Vistoli considers an algebraic group $G$ over $k$ acting properly on a separated scheme $X$ of finite type over $k$ in such a way that the stabilizer of any geometric point of $X$ is finite and reduced. Vistoli defines $K_0^{'}(X//G)$ as the Grothendieck group of $G$-equivariant coherent sheaves on $X$ tensored with $\mathbb{Q}$. 
Under the assumption of the existence of a geometric quotient $X/G$ he proves the existence of an equivariant Riemann-Roch 
map $$\tau_X: K_0^{'}(X//G) \rightarrow A^*(X/G) \otimes \mathbb{Q},$$
which is surjective, but in general not injective. He states a conjecture about its kernel and proves it when $X$ is smooth.
In \cite{gillet_intersection}, Gillet proves a Riemann-Roch theorem for algebraic spaces. It is natural to ask whether there is a Riemann-Roch theorem for algebraic stacks. The Riemann-Roch theorem for algebraic stacks states that there is a homomorphism from the $K$-theory of coherent sheaves on a stack to its Chow groups, which commutes with proper pushforwards along representable morphisms of stacks. 
These approaches are based on the analysis of the invariant cycles on the variety under the group action. 
But in general there are not enough invariant cycles on $X$ to define equivariant Chow groups with nice properties, such as having an intersection product when $X$ is smooth. A development of equivariant intersection theory for actions of linear algebraic groups on schemes and algebraic spaces is presented in 
\cite{edidin_graham_equivariant} by Edidin and Graham. 
They give a construction of equivariant Chow groups which have all the functorial properties of ordinary Chow groups. 
In this approach, instead of considering only invariant cycles on $X$ they define an equivariant class to be represented by an invariant cycle on $X \times V$, 
where $V$ is a representation of $G$. 
The motivation for this definition is the combination of these two facts:
For a vector bundle $E$ on an algebraic variety $X$ we have that $A_i(E)=A_i(X)$,
and the vector bundle $X \times V$ on $X$ descends to a vector bundle on the quotient of $X$ by $G$. 
For an action of the linear algebraic group $G$ on a space $X$ the $i^{th}$ equivariant Chow group of 
$X$ is denoted by $A_i^G(X)$. 

\subsubsection{Functorial properties}
Let $\bold{P}$ be one of the properties of morphisms of schemes or algebraic spaces such as proper, flat, regular embedding or local complete intersection. The following fact is proven in \cite{edidin_graham_equivariant}:

\begin{prop}
Equivariant Chow groups have the same functoriality as ordinary Chow groups for equivariant morphisms with property $\bold{P}$.
\end{prop}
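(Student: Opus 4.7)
The plan is to reduce each functoriality statement to the corresponding property for ordinary Chow groups via the Borel-type mixing construction underlying the definition of $A_i^G$. Recall that $A_i^G(X)$ is defined by choosing a representation $V$ of $G$ and a $G$-invariant open $U\subset V$ on which $G$ acts freely with $V\setminus U$ of codimension greater than $\dim X-i$, and setting $A_i^G(X):=A_{i+\dim V-\dim G}(X_G)$, where $X_G:=(X\times U)/G$ exists as an algebraic space because $G$ acts freely. The first preparatory step is to recall independence of the choice of $(V,U)$: given two choices, one forms the double mixing space and invokes the fact that pullback along an affine bundle is an isomorphism on Chow groups, so that the two resulting groups are canonically identified.

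Given an equivariant morphism $f:X\to Y$ with property $\mathbf{P}$, I would build the induced map on equivariant Chow groups by taking the product with $\mathrm{id}_U$, producing an equivariant morphism $f\times\mathrm{id}_U:X\times U\to Y\times U$, which descends along the free $G$-action to a morphism $f_G:X_G\to Y_G$ of algebraic spaces. One then defines the equivariant pushforward or pullback as the ordinary one for $f_G$. The central technical check is that $f_G$ inherits property $\mathbf{P}$ from $f$: base change from $Y$ to $Y\times U$ preserves each of proper, flat, regular embedding, and l.c.i., and descent along the \'etale principal $G$-bundle $Y\times U\to Y_G$ then transfers the property to $f_G$ (for properness this uses that properness descends along faithfully flat quasi-compact maps; for the remaining three, the properties are local in the smooth topology and so descend automatically).

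Having defined the operation for a fixed approximation $(V,U)$, the remaining step is to check that the assignment respects the comparison isomorphisms used to define $A_i^G$, so that the induced map is intrinsic. This is a diagram chase on the double mixing space using naturality of ordinary flat pullback, proper pushforward, Gysin maps for regular embeddings, and l.c.i.\ pullback, combined with the homotopy invariance identification from the first step. Once intrinsicality is known, the standard identities (functoriality in composition, base change, projection formula, excess intersection for regular embeddings) follow tautologically from the corresponding identities for ordinary Chow groups applied to $f_G$.

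The main obstacle will be the compatibility with changing $(V,U)$ when $f$ is only proper rather than of relative dimension zero: one must arrange the approximations so that the codimension conditions are simultaneously satisfied on source and target, and keep track of the dimension shifts $\dim V-\dim G$ so that proper pushforward on the mixing spaces recovers a well-defined map on $A_i^G$. Once this bookkeeping is set up carefully, descent of $\mathbf{P}$ along the free $G$-quotient is straightforward and the rest is formal.
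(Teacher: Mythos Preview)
Your sketch is correct and follows the standard argument, but note that the paper itself does not prove this proposition: it simply records it as a fact from Edidin--Graham's paper \cite{edidin_graham_equivariant} and moves on. What you have outlined is precisely the approach taken in that reference --- define the operations via the induced map $f_G:X_G\to Y_G$ on mixing spaces, check that property $\mathbf{P}$ is stable under base change by $U$ and descends along the principal $G$-bundle, and verify independence of the approximation $(V,U)$ via the double-mixing/homotopy-invariance argument. So there is nothing to compare against in the present paper, and your proposal is essentially a faithful reconstruction of the original Edidin--Graham proof.
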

 
Theorem \ref{eq} in \cite{edidin_graham_equivariant} says that the rational Chow groups of the quotient of a variety by a group acting with finite stabilizers can be identified with the equivariant Chow groups of the original variety. If the original variety is smooth then the rational Chow groups of the quotient inherit a canonical ring structure (Theorem \ref{ring}). 
The theory of equivariant intersection theory provides a powerful tool in computing the rational Chow groups of a moduli space which is a quotient of an algebraic space $X$. This has a natural ring structure when $X$ is smooth.
Let us first recall the relevant definitions and notations. 

\subsubsection{Equivariant higher Chow groups}
Bloch \cite{bloch_algebraic} defines higher Chow groups $A^i(X,m)$ for a quasi-projective scheme over a field $k$ and an integer $m$ as the $m^{th}$ homology group $H_m(Z^i(X,.))$ of a complex whose $p^{th}$ term is the group of cycles of codimension $i$ in the product $X \times \Delta^p$ of $X$ with which the faces intersect properly. 
Here, $\Delta_p$ denotes the standard cosmiplicial scheme over $k$ defined by 
$$\Delta^p=\text{Spec} \ k[t_0 , \dots , t_p]/(\sum_{i=0}^p t_i-1).$$
In a similar way the group $A_p(X,m)$ is defined. 

\subsection{Intersection theory on quotients}
Let $G$ be a $g$-dimensional group acting on an algebraic space $X$. Vistoli defines a quotient $\pi:X \rightarrow Y$
to be a map which has the following properties:

\begin{itemize}
\item
$\pi$ commutes with the action of $G$, the geometric fibers of $\pi$ 
are the orbits of the geometric points of $X$,

\item
$\pi$ is universally submersive, i.e., $U \subset Y$ is open if and only if $\pi^{-1}(U)$ is, and this property is preserved by base change.
\end{itemize}

\begin{thm}\label{eq}
\begin{enumerate}
\item[(a)] Let $X$ be an algebraic space with a locally proper $G$-action and let $\pi: X \rightarrow Y$ be a quotient. Then
$$A_{i+g}^G(X) \otimes \mathbb{Q} \cong A_i(Y) \otimes \mathbb{Q}.$$

\item[(b)] If in addition $X$ is quasi-projective with a linearized $G$-action, and the quotient $Y$ is quasi-projective, then
$$A_{i+g}^G(X,m) \otimes \mathbb{Q} \cong A_i(Y,m) \otimes \mathbb{Q}$$
\end{enumerate}
 \end{thm}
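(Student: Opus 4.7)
The plan is to use the Totaro--Edidin--Graham definition of $G$-equivariant (higher) Chow groups via mixed quotients. Fix a representation $V$ of $G$ of dimension $l$ together with a $G$-invariant open subset $U \subset V$ on which $G$ acts freely, with $\mathrm{codim}_V(V\setminus U)$ chosen larger than $\dim X - i$ (and also larger than $m$ for part (b)). Then $X \times^G U := (X\times U)/G$ exists as an algebraic space, and by definition
\[
A_{i+g}^G(X) = A_{i+l}(X\times^G U), \qquad A_{i+g}^G(X,m) = A_{i+l}(X\times^G U,m).
\]
A dimension count gives $\dim(X\times^G U) - \dim Y = l$, matching the shift appearing in the theorem.

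Next I would construct the comparison morphism. The $G$-equivariant composition $X\times U \to X \xrightarrow{\pi} Y$ (with trivial $G$-action on $Y$) descends to a morphism $f: X\times^G U \to Y$. Since the $G$-action on $X$ has finite reduced stabilizers, the geometric fiber of $f$ over $y = \pi(x)$ is the quotient $U/G_x$ by the finite stabilizer $G_x$; in particular $f$ has equidimensional fibers of dimension $l$, and étale-locally on $Y$ it is a vector bundle modulo a finite group action. The central claim to prove is that the flat pullback $f^*$ induces an isomorphism
\[
A_i(Y)\otimes \mathbb{Q} \;\xrightarrow{\sim}\; A_{i+l}(X\times^G U)\otimes \mathbb{Q},
\]
and the analogous higher Chow statement for part (b).

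I would establish this isomorphism by étale descent. Using Keel--Mori (or an explicit slice argument tailored to locally proper $G$-actions), one finds étale covers $Y'\to Y$ over which the pullback admits a $G$-equivariant étale map from a scheme $X'$ carrying a \emph{free} $G$-action. Over $Y'$ the map becomes a genuine $U$-bundle, so homotopy invariance of Chow (and of Bloch's higher Chow) groups yields an integral isomorphism on the cover. Because $Y'\to Y$ is finite étale of degree invertible in $\mathbb{Q}$, the averaging identity $\pi_*\pi^* = \deg \cdot \mathrm{id}$ shows that rational (higher) Chow groups of $Y$ are exactly the invariants in those of $Y'$, and the isomorphism descends to the original $f^*$.

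The main obstacle is the étale-local trivialization of $f$: one must produce finite étale covers of $Y$ on which the stabilizers of the $G$-action become trivial, which is precisely what Keel--Mori guarantees for algebraic spaces under the locally proper plus finite reduced stabilizer hypothesis. A secondary point, needed implicitly, is that the definition of $A_{i+g}^G(X)$ and its higher Chow analogue is independent of the choice of $(V,U)$ satisfying the codimension bound; this is the standard double-fibration argument comparing the mixed quotients for $V$ and $V \oplus V'$ via the vector bundle $X \times^G (U \oplus V') \to X \times^G U$, invoking homotopy invariance again. For part (b), the quasi-projectivity hypotheses are used to ensure that Bloch's higher Chow groups are defined with the required functoriality, localization, and homotopy properties, after which the argument proceeds identically to (a).
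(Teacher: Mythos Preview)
The paper is a survey and does not supply its own proof of this theorem; it simply states the result and attributes it to Edidin and Graham \cite{edidin_graham_equivariant}. So there is no in-paper argument to compare against, only the original source.

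Your sketch is essentially the Edidin--Graham argument: form the mixed quotient $X\times^G U$, observe that the induced morphism $f:X\times^G U\to Y$ has fibers of the form $U/G_x$, reduce to the free-action case by passing to a finite cover of $Y$, and then invoke homotopy invariance of (higher) Chow groups. That is the correct skeleton.

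There is one inaccuracy worth flagging. You appeal to Keel--Mori to produce finite \emph{\'etale} covers $Y'\to Y$ over which the stabilizers become trivial. Keel--Mori gives existence of a coarse moduli (geometric quotient) space, not this kind of trivializing cover; and in general the covers one actually produces---via Seshadri's equivariant completion / normalization arguments, or the explicit construction in Edidin--Graham's paper---are only finite and surjective, not \'etale. This does not break the argument, since rational Chow groups (and, under the quasi-projectivity hypotheses, Bloch's higher Chow groups) satisfy descent for finite surjective morphisms via the same projection-formula identity $p_*p^* = (\deg p)\cdot\mathrm{id}$ you wrote down. But you should replace ``\'etale'' by ``finite surjective'' throughout that paragraph and cite the correct input (Seshadri, or Proposition~10 of Edidin--Graham) rather than Keel--Mori.
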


\begin{thm}\label{ring}
With the same hypotheses as in Theorem \ref{eq}, there is an isomorphism of operational Chow rings
$$\pi^*:A^*(Y)_{\mathbb{Q}} = A^*_G(X)_{\mathbb{Q}}.$$
Moreover if $X$ is smooth, then the map $A^*(Y)_{\mathbb{Q}} \rightarrow A_*(Y)_{\mathbb{Q}}$ is an isomorphism. 
In particular, if $X$ is smooth, the rational Chow groups of the quotient space $Y=X/G$ have a ring structure, which is independent of the presentation of $Y$ as a quotient of $X$ by $G$.
\end{thm}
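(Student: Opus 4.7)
The plan is to leverage Theorem \ref{eq}, which identifies equivariant Chow groups of $X$ with ordinary Chow groups of $Y$ after tensoring with $\mathbb{Q}$, and to promote this identification from Chow groups (covariant, cycle-like) to operational Chow groups (contravariant, bivariant-like) by working with the full bivariant formalism of Fulton--MacPherson. The equivariant operational group $A^*_G(X)_{\mathbb{Q}}$ is defined as the group of $G$-equivariant bivariant operations acting on equivariant Chow groups of algebraic spaces mapping to $X$, compatible with equivariant proper pushforward, flat pullback, and refined intersection. The pullback $\pi^* c$ of an operational class $c\in A^*(Y)_{\mathbb{Q}}$ is defined in the obvious way by descent of cycles, and one checks that this gives a well-defined ring homomorphism.

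First, I would establish injectivity. If $\pi^*c = 0$, then for every morphism $Y' \to Y$ consider the pullback $X' = X \times_Y Y'$ equipped with its induced locally proper $G$-action and quotient $X' \to Y'$. By Theorem \ref{eq}(a), $A^G_{*+g}(X')_{\mathbb{Q}} \cong A_*(Y')_{\mathbb{Q}}$, and under this identification the action of $\pi^*c$ on the left is intertwined with the action of $c$ on the right. Vanishing of $\pi^*c$ as an equivariant operational class forces vanishing of the operation of $c$ on every $A_*(Y')_{\mathbb{Q}}$, hence $c = 0$. For surjectivity, given $\tilde{c} \in A^{p}_G(X)_{\mathbb{Q}}$ I would define $c$ by declaring its action on $A_*(Y')_{\mathbb{Q}}$ to be the composition
\[
A_*(Y')_{\mathbb{Q}} \;\cong\; A^G_{*+g}(X')_{\mathbb{Q}} \;\xrightarrow{\tilde{c}}\; A^G_{*+g-p}(X')_{\mathbb{Q}} \;\cong\; A_{*-p}(Y')_{\mathbb{Q}},
\]
using Theorem \ref{eq}(a) in both outer arrows. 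Verifying that this really defines an operational class amounts to checking compatibility with proper pushforward, flat pullback, and Gysin maps for regular embeddings into $Y'$-schemes, and these follow from the corresponding equivariant compatibilities of $\tilde{c}$ together with the naturality of the isomorphism of Theorem \ref{eq} under these operations. Ring structure is preserved because intersection in operational theory is composition of operations, which $\pi^*$ manifestly respects.

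For the second assertion, assume $X$ is smooth of dimension $n$. Then $X$ with its $G$-action admits an equivariant fundamental class, and the usual Poincar\'e duality argument for smooth schemes extends equivariantly to give a canonical isomorphism $A^i_G(X)_{\mathbb{Q}} \cong A^G_{n-i}(X)_{\mathbb{Q}}$ via capping. Stringing together
\[
A^i(Y)_{\mathbb{Q}} \;\cong\; A^i_G(X)_{\mathbb{Q}} \;\cong\; A^G_{n-i}(X)_{\mathbb{Q}} \;\cong\; A_{n-g-i}(Y)_{\mathbb{Q}}
\]
(using the first part, equivariant Poincar\'e duality, and Theorem \ref{eq}(a)) and noting $\dim Y = n-g$, one sees the Poincar\'e duality map $A^*(Y)_{\mathbb{Q}} \to A_*(Y)_{\mathbb{Q}}$, $c \mapsto c \cap [Y]$, is an isomorphism. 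Because the left-hand side $A^*(Y)_{\mathbb{Q}}$ has a canonical ring structure intrinsic to $Y$, the resulting ring structure transferred to $A_*(Y)_{\mathbb{Q}}$ is likewise intrinsic, settling independence of presentation.

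The main obstacle I anticipate is verifying surjectivity in the first step: one must show that the formally defined operation $c$ built out of $\tilde{c}$ via Theorem \ref{eq} is compatible with all the bivariant axioms, and in particular commutes with refined Gysin pullbacks along regular embeddings into targets that carry no a priori $G$-action. This requires using that the isomorphism of Theorem \ref{eq} is natural enough to transport not only proper pushforwards and flat pullbacks but also intersection-theoretic operations — a point where the quasi-projectivity and linearization hypotheses (entering through Theorem \ref{eq}(b) and the construction of equivariant envelopes) are essential.
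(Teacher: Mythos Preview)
The paper is a survey and does not supply a proof of this theorem; it is simply quoted as a result of Edidin and Graham \cite{edidin_graham_equivariant}, so there is no argument in the paper to compare your proposal against.

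For what it is worth, your sketch is essentially the argument given in the original source. Edidin and Graham build the inverse to $\pi^*$ exactly as you do: for $\tilde c\in A^*_G(X)_{\mathbb Q}$ and any $Y'\to Y$, they pull back to $X'=X\times_Y Y'$, invoke the group-level isomorphism $A^G_{*+g}(X')_{\mathbb Q}\cong A_*(Y')_{\mathbb Q}$ of their Theorem~3 (your Theorem~\ref{eq}), and transport the action of $\tilde c$ across. The bivariant compatibilities you flag as the main obstacle are indeed the technical heart of their proof; they are dispatched by the naturality of the quotient isomorphism under proper pushforward, flat pullback, and refined Gysin maps, together with the fact that every $Y'\to Y$ arises from a $G$-space over $X$ via base change. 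Your treatment of the Poincar\'e duality step in the smooth case, via the chain $A^*(Y)_{\mathbb Q}\cong A^*_G(X)_{\mathbb Q}\cong A^G_{n-*}(X)_{\mathbb Q}\cong A_{n-g-*}(Y)_{\mathbb Q}$, also matches theirs. One small point of caution: you define $A^*_G(X)$ as a theory of ``$G$-equivariant bivariant operations,'' whereas Edidin--Graham define it as the ordinary operational ring of a finite-dimensional approximation $X_G=(X\times U)/G$; the two agree, but in a fully written-out proof you would need to make that identification explicit before running your argument.
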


\subsection{Integral Chow groups of quotient stacks}
When the group $G$ acts on an algebraic space $X$, the quotient $[X/G]$ exists in the category of stacks. 
In \cite{edidin_graham_equivariant} the relation between equivariant Chow groups and Chow groups of quotient stacks is discussed. 
In particular, it is shown that for proper actions, equivariant Chow groups coincide with the Chow groups defined by Gillet in terms of integral sub-stacks. 
It is also proven that the intersection products of Gillet and Vistoli are the same.  
For a quotient stack $\mathcal{F}=[X/G]$ the integral Chow group $A_i(\mathcal{F})$ is defined to be $A_{i+g}^G(X)$, where 
$g=\dim G$. It is proven in \cite{edidin_graham_equivariant} that the equivariant Chow groups do not depend on the representation as a quotient and hence they are invariants of the stack. The following fact shows that there is a natural product of the sum of the integral Chow groups of a quotient stack:

\begin{prop}
If $\mathcal{F}$ is smooth, then $\oplus_i A_i(\mathcal{F})$ has an integral ring structure. 
\end{prop}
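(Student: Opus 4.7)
The plan is to derive the ring structure on $\bigoplus_i A_i(\mathcal{F})$ from classical intersection theory on smooth algebraic spaces, via the Totaro-style mixed-space model that defines equivariant Chow groups in the Edidin-Graham framework. Since $\mathcal{F}=[X/G]$ is smooth and $G$ is a linear algebraic group (hence smooth over the base), $X$ itself must be a smooth algebraic space with $G$-action.

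First I would recall the approximation: for each $i$, choose a representation $V$ of $G$ together with a $G$-invariant open $U \subset V$ on which $G$ acts freely and whose complement has codimension strictly greater than $i$ in $V$. Such $(V,U)$ exist for $G$ linear algebraic (this is the standard Bogomolov/Totaro construction). By definition,
\[
A^i_G(X) \;=\; A^i\bigl((X \times U)/G\bigr),
\]
and the right-hand side is independent of the choice of $(V,U)$ provided the codimension condition holds. The mixed quotient $(X\times U)/G$ is a smooth algebraic space, because $X\times U$ is smooth and $G$ acts freely with quotient an algebraic space. Classical intersection theory (extended from schemes to algebraic spaces) therefore endows $\bigoplus_i A^i\bigl((X\times U)/G\bigr)$ with an integral graded commutative ring structure coming from the diagonal on the smooth quotient.

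Next I would assemble these products into a single product on the equivariant ring. Given classes $\alpha \in A^i_G(X)$ and $\beta \in A^j_G(X)$, pick $(V,U)$ whose complement has codimension greater than $i+j$; then both $\alpha$ and $\beta$ are represented on the same mixed space $(X\times U)/G$, so their product $\alpha\cdot\beta\in A^{i+j}((X\times U)/G)=A^{i+j}_G(X)$ is defined. Re-indexing via $A_k(\mathcal{F}) = A^G_{k+g}(X) = A^{\dim X - k}_G(X)$ (using $\dim \mathcal{F} = \dim X - g$) yields the asserted product on $\bigoplus_i A_i(\mathcal{F})$.

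The main obstacle, and the part that needs genuine care rather than routine invocation, is the independence from the choice of approximation $(V,U)$ together with associativity. One handles this by the standard double-fibration trick: given two approximations $(V,U)$ and $(V',U')$, compare both with $(V\oplus V', U\times U')$. The projections from $(X\times U\times U')/G$ down to $(X\times U)/G$ and to $(X\times U')/G$ are affine bundles, so flat pullback gives isomorphisms of Chow groups in the relevant codimension range; and flat pullback is a ring homomorphism on smooth spaces, which forces the two product structures to agree. Associativity on each mixed space is standard, and commutes with these flat pullbacks, so it descends. This verification completes the proof; note that everything takes place integrally because each mixed space is a genuine smooth algebraic space, so no passage to $\mathbb{Q}$-coefficients is required.
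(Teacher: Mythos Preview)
The paper itself does not supply a proof of this proposition: it is a survey that simply quotes the result from Edidin and Graham's paper on equivariant intersection theory. Your argument is the standard one from that source---build the product on the smooth mixed spaces $(X\times U)/G$ using Totaro's approximation, and verify well-definedness by the double-fibration comparison with $(V\oplus V',\,U\times U')$---so there is nothing to contrast. Your sketch is correct and matches the approach the paper is implicitly citing.
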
 

It is also proven in \cite{edidin_graham_equivariant} that the first equivariant Chow group $A^1_G(X)$ coincides with the Picard group of the moduli problem defined by Mumford for the stack $\mathcal{F}=[X/G]$:

\begin{prop}
Let $X$ be a smooth variety with a $G$-action. Then $A^1_G(X)=Pic_{fun}([X/G])$.
\end{prop}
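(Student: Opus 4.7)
The plan is to link both sides to the group $\mathrm{Pic}^G(X)$ of $G$-equivariant line bundles on $X$. On one hand, Mumford's functorial Picard group $\mathrm{Pic}_{\mathrm{fun}}([X/G])$ is by definition the group of line bundles on the stack $[X/G]$, and a line bundle on $[X/G]$ is exactly a $G$-equivariant line bundle on $X$: a family $T\to[X/G]$ is a $G$-torsor $P\to T$ with an equivariant map $P\to X$, and the assignment $T\mapsto(\text{line bundle on }T)$ satisfying the cocycle condition is the descent datum defining an element of $\mathrm{Pic}^G(X)$. This identification $\mathrm{Pic}_{\mathrm{fun}}([X/G])\cong\mathrm{Pic}^G(X)$ is essentially formal once one unwinds the stacky Yoneda argument.

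On the other hand, to match $A^1_G(X)$ with $\mathrm{Pic}^G(X)$ I would use the Edidin--Graham mixing construction directly. Choose a representation $V$ of $G$ and an open $U\subset V$ on which $G$ acts freely, with $\mathrm{codim}_V(V\setminus U)\geq 2$; then by definition
$$A^1_G(X)\;=\;A^1\bigl((X\times U)/G\bigr).$$
Since $X$ is smooth, $X\times U$ is smooth, $G$ acts freely, so the quotient $Y:=(X\times U)/G$ exists as a smooth algebraic space, and for smooth spaces $A^1(Y)=\mathrm{Pic}(Y)$. The principal $G$-bundle $X\times U\to Y$ gives, by faithfully flat descent, $\mathrm{Pic}(Y)=\mathrm{Pic}^G(X\times U)$.

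It remains to show $\mathrm{Pic}^G(X\times U)=\mathrm{Pic}^G(X)$. This I would do in two steps. First, the inclusion $X\times U\hookrightarrow X\times V$ has complement of codimension at least $2$ in a smooth ambient space, so restriction induces an isomorphism $\mathrm{Pic}^G(X\times V)\xrightarrow{\sim}\mathrm{Pic}^G(X\times U)$ (apply the standard codimension-two purity statement for Weil divisors to the equivariant descent of a line bundle, or equivalently to the quotient $(X\times V)/G$ once one has localized the potential non-freeness). Second, the projection $X\times V\to X$ is a $G$-equivariant vector bundle, and $\mathrm{Pic}^G$ is homotopy invariant under such projections (pullback from $X$ is an isomorphism, with inverse given by restriction to the zero section). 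Composing the four identifications yields $A^1_G(X)=\mathrm{Pic}^G(X)=\mathrm{Pic}_{\mathrm{fun}}([X/G])$, as desired.

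The main obstacle, in my view, is the codimension-two step: one must check that the purity statement $\mathrm{Pic}^G(X\times V)\xrightarrow{\sim}\mathrm{Pic}^G(X\times U)$ really holds equivariantly and not only after descent to the open locus where $G$ acts freely. The cleanest way to handle this is to observe that both sides can be computed on any smooth $G$-invariant open subset with complement of codimension $\geq 2$ by the usual Weil-divisor argument applied $G$-equivariantly, together with the fact that equivariant line bundles extend uniquely across such subsets because their sheaves of sections, being reflexive of rank one on a smooth ambient, extend uniquely. Everything else (descent, homotopy invariance, $A^1=\mathrm{Pic}$ for smooth schemes, and Mumford's reinterpretation of stacky line bundles) is standard and can be assembled without further difficulty.
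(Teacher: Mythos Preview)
The paper itself does not supply a proof of this proposition; it merely records the statement and attributes it to Edidin and Graham's paper on equivariant intersection theory. So there is no ``paper's own proof'' to compare against directly.

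That said, your argument is correct and is essentially the proof that Edidin and Graham give in their original paper. The chain of identifications
\[
A^1_G(X)\;=\;A^1\bigl((X\times U)/G\bigr)\;=\;\mathrm{Pic}\bigl((X\times U)/G\bigr)\;=\;\mathrm{Pic}^G(X\times U)\;=\;\mathrm{Pic}^G(X\times V)\;=\;\mathrm{Pic}^G(X)\;=\;\mathrm{Pic}_{\mathrm{fun}}([X/G])
\]
is exactly the standard route. Your identification of the codimension-two extension step as the only place requiring care is accurate, and your sketch of how to handle it (extend the underlying line bundle by reflexivity/Weil-divisor purity on the smooth ambient $X\times V$, then extend the linearization isomorphism $\sigma^*L\cong p_2^*L$ across the codimension-$\geq 2$ complement in $G\times X\times V$ by the same mechanism, with the cocycle condition holding automatically by density) is the correct one. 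For the homotopy-invariance step you could alternatively bypass the direct equivariant argument and simply invoke homotopy invariance of $A^1_G$ itself, which reduces to ordinary homotopy invariance of Chow groups on the mixing space; either way works.
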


There is also the notion of the integral operational Chow ring associated to any stack $\mathcal{F}$: 
An element $c \in A^k(\mathcal{F})$ defines an operation $$c_f: A_*(B) \rightarrow A_{*-k} (B)$$
for any map of schemes $f:B \rightarrow \mathcal{F}$. This naturally has a ring structure. 
The following fact shows that equivariant Chow rings coincide with the operational Chow rings: 

\begin{prop}
Let $\mathcal{F}=[X/G]$ be a smooth quotient stack. Then $A^*(\mathcal{F})=A^*_G(X)$.
\end{prop}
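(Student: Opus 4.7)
The plan is to construct two mutually inverse homomorphisms $\Phi: A^*_G(X) \to A^*(\mathcal{F})$ and $\Psi: A^*(\mathcal{F}) \to A^*_G(X)$, using smoothness of $X$ to pass freely between operational and ordinary Chow groups on the approximating schemes.

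First I would build $\Phi$. A morphism $f: B \to \mathcal{F}$ from a scheme $B$ is the same datum as a principal $G$-bundle $\pi: P \to B$ together with a $G$-equivariant morphism $\varphi: P \to X$. Given a class $\alpha \in A^*_G(X)$, pull it back to $\varphi^*\alpha \in A^*_G(P)$; since $G$ acts freely on $P$ with quotient $B$, the descent identity $A^*_G(P) \cong A^*(B)$ coming from Theorem \ref{eq} produces a well-defined ordinary Chow class on $B$. Capping against this class gives an operation $A_*(B) \to A_{*-k}(B)$, and compatibility with flat, proper, and l.c.i.\ morphisms between schemes over $\mathcal{F}$ is inherited from the corresponding compatibilities of equivariant Chow, so $\Phi(\alpha)$ is a bona fide element of $A^*(\mathcal{F})$.

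Next I would build $\Psi$ by evaluating operational classes on the Edidin--Graham approximations. Fix $k$. Choose a representation $V$ of $G$ with an invariant open subset $U \subset V$ such that $G$ acts freely on $U$ and $\mathrm{codim}_V(V \setminus U) > k$; then $(X \times U)/G$ exists as an algebraic space, is smooth because $X$ and $U$ are, and comes equipped with a canonical map $h_U: (X \times U)/G \to \mathcal{F}$. By the definition of the equivariant Chow group one has $A^k((X \times U)/G) = A^k_G(X)$ in degree $k$, and since $(X \times U)/G$ is smooth the cycle map identifies ordinary and operational Chow rings in that degree. Given $c \in A^k(\mathcal{F})$ I would define $\Psi(c)$ to be the class obtained by acting with $h_U^* c$ on the fundamental class of $(X \times U)/G$. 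Independence of the auxiliary data $(V, U)$ follows by comparing two such choices with a third obtained from their direct sum, together with the homotopy property of Chow groups in the vector-bundle direction; this is the standard argument verifying that the Edidin--Graham definition is intrinsic.

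Finally I would check that $\Phi$ and $\Psi$ are mutually inverse. The composition $\Psi \circ \Phi$ sends $\alpha$ to the class produced by pulling $\alpha$ back to the free quotient $(X \times U)/G$, which by construction returns $\alpha$ itself. For $\Phi \circ \Psi$ the key point is that an operational class $c$ is determined by its value on $(X \times U)/G$ for sufficiently positive $U$: given any map $f: B \to \mathcal{F}$ corresponding to $(\pi, \varphi)$, the bundle $P \to B$ is pulled back from $(X \times U)/G$ along a map obtained by choosing a classifying section into $U$ (up to $G$), and compatibility of the operational class with this pullback forces the two values on $A_*(B)$ to coincide. I expect this last compatibility step to be the main obstacle, since it requires a careful comparison between the abstract operational formalism on the stack and the concrete approximations; smoothness of $X$ is essential here because it ensures that the approximating spaces are themselves smooth, so one may identify operational with ordinary Chow on them and reduce the verification to a statement about ordinary pullbacks.
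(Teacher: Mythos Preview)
The paper does not actually prove this proposition; it is quoted without argument from Edidin and Graham's paper \cite{edidin_graham_equivariant}, so there is no in-text proof to compare against. Your sketch is essentially the argument Edidin and Graham themselves give, and the overall architecture---build a map from $A^*_G(X)$ to operations via descent along principal bundles, build the inverse by evaluating on the mixed quotients $(X\times U)/G$, and use smoothness to identify operational with ordinary Chow on the approximations---is the right one.

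Two small points. First, your appeal to Theorem~\ref{eq} for the identification $A^*_G(P)\cong A^*(B)$ is slightly misdirected: that theorem is stated with rational coefficients and for proper actions, whereas what you actually need is the integral statement for \emph{free} actions, which is essentially the defining property of the Edidin--Graham construction rather than a consequence of Theorem~\ref{eq}. Second, in the $\Phi\circ\Psi$ step your claim that any $G$-bundle $P\to B$ ``is pulled back from $(X\times U)/G$ along a map obtained by choosing a classifying section into $U$'' is not literally true for a fixed $U$ and arbitrary $B$; the approximation $U/G$ only classifies bundles over schemes of dimension bounded by $\operatorname{codim}(V\setminus U)$. The clean way around this, and the way Edidin--Graham do it, is to invoke Kimura's lemma: an operational class on a scheme (or algebraic space) is determined by its action on fundamental classes of smooth schemes mapping in via Chow envelopes, so it suffices to test on smooth $B$ of each fixed dimension, and for those one can choose $U$ large enough. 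You correctly flag this step as the main obstacle; filling it in with the envelope argument would make the proof complete.
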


\subsection{The Chow ring of the moduli stack of elliptic curves}
As an application of this theory the integral Chow ring of the moduli stack of elliptic curves is computed in \cite{edidin_graham_equivariant}.
Recall that a section of $\mathcal{M}_{1,1}$ over a scheme $S$ is the data $(\pi:C \rightarrow S,\sigma)$, 
where $\pi$ is a smooth curve of genus one and $\sigma:S \rightarrow C$ is a smooth section. 
The stack $\overline{\mathcal{M}}_{1,1}$ is defined in a similar way, with the difference that the fibers are $\pi$
are assumed to be stable nodal curves. 
It is easy to see that these stacks are quotients of smooth varieties by actions of linear algebraic groups.
The computation of the equivariant intersection ring of these quotient stacks gives the following result:
\begin{prop}
$$A^*(\mathcal{M}_{1,1})=\mathbb{Z}[t]/(12t), \qquad A^*(\overline{\mathcal{M}}_{1,1})=\mathbb{Z}[t]/(24t^2),$$
where $t$ is the first Chern class of the Hodge bundle.
\end{prop}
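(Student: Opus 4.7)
The plan is to exhibit both stacks as explicit global quotients of smooth varieties by a torus action and then compute with equivariant intersection theory, invoking the general framework assembled earlier in the excerpt (Theorem \ref{ring} together with the identification $A_i(\mathcal{F})=A_{i+g}^G(X)$ for $\mathcal{F}=[X/G]$).

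The first step is the Weierstrass presentation. Working over a base where $6$ is invertible (the general base is handled by the same argument with the universal Weierstrass equation and a larger, but still special, linear group), every elliptic curve can be written as $y^2=x^3+ax+b$, and two tuples $(a,b),(a',b')$ define isomorphic pointed elliptic curves if and only if they are related by $(a,b)\mapsto(\lambda^4 a,\lambda^6 b)$ for some $\lambda\in\mathbb{G}_m$. Thus $\mathcal{M}_{1,1}\cong[U/\mathbb{G}_m]$ where $U=\mathbb{A}^2\setminus V(\Delta)$ with $\Delta=4a^3+27b^2$, and $\overline{\mathcal{M}}_{1,1}\cong[(\mathbb{A}^2\setminus\{0\})/\mathbb{G}_m]$, since removing only the cusp $(0,0)$ leaves both smooth and nodal (stable) fibers.

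The second step is the equivariant computation. Let $t\in A^1_{\mathbb{G}_m}(\mathrm{pt})$ be the standard generator. Since $\mathbb{A}^2$ is $\mathbb{G}_m$-equivariantly contractible, $A^*_{\mathbb{G}_m}(\mathbb{A}^2)=\mathbb{Z}[t]$. For $\mathcal{M}_{1,1}$, the excision sequence
$$A^*_{\mathbb{G}_m}(V(\Delta))\to A^*_{\mathbb{G}_m}(\mathbb{A}^2)\to A^*_{\mathbb{G}_m}(U)\to 0$$
shows that $A^*_{\mathbb{G}_m}(U)$ is the quotient of $\mathbb{Z}[t]$ by the equivariant class of the principal divisor $V(\Delta)$. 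Since $\Delta$ is weighted homogeneous of weight $12$ with respect to the $(4,6)$-action, that class is $12t$, giving $\mathbb{Z}[t]/(12t)$. For $\overline{\mathcal{M}}_{1,1}$, the corresponding excision with center $\{0\}\subset\mathbb{A}^2$ identifies the kernel with the ideal generated by the top equivariant Chern class of the tangent representation at the origin, namely the product of the weights $4t\cdot 6t=24t^2$, yielding $\mathbb{Z}[t]/(24t^2)$.

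The third step is identifying $t$ with $c_1(\mathbb{E})$. On the Weierstrass family the invariant differential $\omega=dx/y$ generates $\pi_*\omega_{\pi}$, and under the $\mathbb{G}_m$-action $(x,y)\mapsto(\lambda^2 x,\lambda^3 y)$ (which rescales $(a,b)$ by $(\lambda^4,\lambda^6)$) the form $\omega$ is scaled by $\lambda^{-1}$. Hence the Hodge line bundle is the standard character of $\mathbb{G}_m$ up to sign, so $c_1(\mathbb{E})=\pm t$ generates the respective rings, and the canonical extension across the nodal boundary identifies the same $t$ on $\overline{\mathcal{M}}_{1,1}$.

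The main obstacle is the first step in full generality: the presentation $\mathcal{M}_{1,1}=[U/\mathbb{G}_m]$ via short Weierstrass equations is genuinely valid only after inverting $2$ and $3$. Over an arbitrary base one must instead work with the generalized Weierstrass equation $y^2+a_1xy+a_3y=x^3+a_2x^2+a_4x+a_6$ and the six-dimensional solvable group of admissible changes of variables; showing that $\mathcal{M}_{1,1}$ and $\overline{\mathcal{M}}_{1,1}$ are global quotients by a special linear group and that the resulting equivariant Chow computation still collapses to the cyclic answers $\mathbb{Z}[t]/(12t)$ and $\mathbb{Z}[t]/(24t^2)$ is the technical heart of the argument; everything after that is a routine excision calculation.
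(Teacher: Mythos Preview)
Your proposal is correct and is exactly the Edidin--Graham computation that the paper is invoking: the paper's own ``proof'' is only the remark that these stacks are quotients of smooth varieties by linear algebraic groups together with a pointer to the equivariant Chow computation, and your Weierstrass presentation $[U/\mathbb{G}_m]$ with weights $(4,6)$, followed by excision on $V(\Delta)$ (weight $12$) and on $\{0\}$ (equivariant class $4t\cdot 6t=24t^2$), is precisely how that computation goes. Your caveat about characteristic $2$ and $3$ is also on target; the paper does not address this and the cited source works over a field with the short Weierstrass form.
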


\subsection{The integral Chow ring of the moduli stack of smooth curves of genus two}
In the appendix of \cite{edidin_graham_equivariant} Vistoli considers the case of genus two and computes the Chow ring of $\mathcal{M}_2$. 
He proves that $\mathcal{M}_2$ is a quotient of an algebraic space by a linear algebraic group:
Let $Y$ be the stack whose objects are pairs $(\pi,\alpha)$, where $\pi:C \rightarrow S$ is a smooth proper
morphism of schemes whose fibers are curves of genus 2, and $\alpha$ is an isomorphism of $\mathcal{O}_S$
sheaves $\alpha: \mathcal{O}_S^{\oplus 2} \cong \pi_* \omega_{\pi}$, where $\omega_{\pi}$ is the relative dualizing
sheaf of $\pi$; morphisms are defined in the natural way as canonical pull-backs. 
The objects of $Y$ have no automorphisms, so that $Y$ is an algebraic space.
There is a natural left GL$_{2,k}$ action on $Y$: if $(\pi,\alpha)$ is an object of $Y$ with basis $S$ and $A \in$ GL$_2(S)$,
the action of $A$ on the pair $(\pi,\alpha)$ is defined by $$A \cdot (\pi,\alpha)=(\pi,\alpha \circ A^{-1}).$$
The moduli stack $\mathcal{M}_2$ is equal to the quotient $Y/\text{GL}_2$.
Vistoli identifies $Y$ with the open subset of all binary forms $\phi(x)=\phi(x_0,x_1)$ in two variables of degree 6, denoted by $X$,
consisting of non-zero forms with distinct roots. The given action of GL$_2$ corresponds to the action of
GL$_2$ on $X$ defined by $$A \cdot \phi(x)=\text{det} (A)^2 \phi(A^{-1}x).$$
The calculation of the equivariant Chow ring $A^*_{\text{GL}_2}(X)$ of $X$ leads to the following result:

\begin{thm}
Assume that $k$ has characteristic different from 2 and 3. Then $$A^*(\mathcal{M}_2)=\frac{\mathbb{Z}[\lambda_1,\lambda_2]}{(10\lambda_1, 2\lambda_1^2-24 \lambda_2)},$$
where $\lambda_i=c_i(\mathbb{E})$ is the $i^{th}$ Chern class of the Hodge bundle.
\end{thm}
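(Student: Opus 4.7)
The plan is to combine Vistoli's presentation $\mathcal{M}_2 = [X^\circ / \mathrm{GL}_2]$---where $X \cong \mathbb{A}^7$ is the space of binary sextics and $X^\circ \subset X$ is the open locus of sextics with six distinct roots---with the equivariant intersection theory reviewed above. Since $[X^\circ / \mathrm{GL}_2]$ is a smooth quotient stack, the proposition immediately preceding this theorem gives $A^*(\mathcal{M}_2) = A^*_{\mathrm{GL}_2}(X^\circ)$, so the problem reduces to an equivariant Chow computation on an open subset of a linear representation.

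First I would identify the ambient ring. By homotopy invariance of equivariant Chow groups, $A^*_{\mathrm{GL}_2}(X) = A^*(B\mathrm{GL}_2) = \mathbb{Z}[c_1, c_2]$, where $c_i$ are the Chern classes of the standard representation $V$. To match these with the Hodge classes, I would observe that the trivialization $\alpha: \mathcal{O}_S^{\oplus 2} \cong \pi_* \omega_\pi$ and the action $A \cdot (\pi, \alpha) = (\pi, \alpha \circ A^{-1})$ exhibit $Y$ as the $\mathrm{GL}_2$-frame bundle of the Hodge bundle $\mathbb{E}$ over $\mathcal{M}_2$. Hence $\mathbb{E}$ pulls back to the equivariant trivial bundle with the standard $\mathrm{GL}_2$-action, so $\lambda_i = c_i$ and $A^*_{\mathrm{GL}_2}(X) = \mathbb{Z}[\lambda_1, \lambda_2]$.

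Next I would exploit the excision sequence
\[
A^*_{\mathrm{GL}_2}(X \setminus X^\circ) \longrightarrow A^*_{\mathrm{GL}_2}(X) \longrightarrow A^*_{\mathrm{GL}_2}(X^\circ) \longrightarrow 0,
\]
which identifies $A^*(\mathcal{M}_2)$ with $\mathbb{Z}[\lambda_1, \lambda_2]$ modulo the ideal of classes supported on the discriminant complement. The divisorial part $D \subset X$ is the irreducible hypersurface $\{\Delta = 0\}$, where $\Delta$ is the discriminant of a binary sextic, homogeneous of degree $10$ in the coefficients. A direct torus character computation, using $A \cdot \phi(x) = \det(A)^2 \phi(A^{-1} x)$, shows that $\Delta$ is a $\mathrm{GL}_2$-semi-invariant of weight $\det^{\pm 10}$; therefore $[D] = 10\lambda_1$ in $A^1_{\mathrm{GL}_2}(X)$, yielding the first relation $10\lambda_1 = 0$.

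The remaining and most delicate step is to extract the codimension-two relation $2\lambda_1^2 - 24\lambda_2 = 0$, which must arise from the deeper $\mathrm{GL}_2$-invariant strata of $X \setminus X^\circ$: sextics with a triple root and sextics with two double roots. I would compute their classes via the incidence variety
\[
Z = \{(\phi, p) \in X \times \mathbb{P}(V) : \mathrm{ord}_p(\phi) \geq 3\},
\]
realized inside $X \times \mathbb{P}(V)$ as the vanishing locus of a section of the equivariant $2$-jet bundle of $\mathcal{O}_{\mathbb{P}(V)}(6)$ (with an analogous construction for the two-double-root stratum), then push the top Chern class down to $X$ along $\pi_X$, with careful bookkeeping of the $\mathrm{GL}_2$-twists on the jet bundles, to obtain the codimension-two class as a $\mathbb{Z}$-combination of $\lambda_1^2$ and $\lambda_2$. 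The main obstacle lies precisely in this bookkeeping and in verifying that all strata of codimension three and higher already contribute classes inside the ideal $(10\lambda_1,\, 2\lambda_1^2 - 24\lambda_2)$, so that no further independent relation remains; the hypothesis $\mathrm{char}(k) \neq 2, 3$ enters here, both to ensure the discriminant is reduced and to guarantee the jet-bundle sections are transverse along the relevant strata.
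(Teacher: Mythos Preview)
Your proposal follows exactly the strategy the paper sketches: present $\mathcal{M}_2$ as the quotient of the open locus of binary sextics with distinct roots by the twisted $\mathrm{GL}_2$-action $A\cdot\phi(x)=\det(A)^2\phi(A^{-1}x)$, identify the Chow ring of the stack with the equivariant Chow ring of that open set, and then compute the latter. The paper does not carry out the excision and stratification details you outline (it simply says ``the calculation of the equivariant Chow ring $A^*_{\mathrm{GL}_2}(X)$ of $X$ leads to the following result''), but your plan to extract $10\lambda_1$ from the discriminant divisor and the codimension-two relation from the deeper strata is precisely how Vistoli's original argument proceeds, so the approaches coincide.
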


The Chow rings of moduli spaces of maps of projective spaces and the Hilbert scheme of rational normal curves 
are computed by Pandharipande in 
\cite{pandharipande_chow, pandharipande_intersections} 
using equivariant methods.

%\appendix
%\section{Algebraic stacks and their intersection theory}
%\subsection{First appendix}
%\subsection{Second appendix}

%\section{Geometric invariant theory}

\bibliographystyle{amsplain}
\bibliography{mybibliography}
\end{document}